% \documentclass{article}
% % \usepackage[utf8]{inputenc}

% % \usepackage[foot]{amsaddr}

% \usepackage{amsmath,amsfonts,amsthm,epsfig,graphicx,mathtools,tikz-cd, amssymb,setspace,enumerate,enumitem}

% \usepackage[foot]{amsaddr}

% % \usepackage[foot]{amsaddr}
% % \usepackage{caption}
% % \usepackage{mathrsfs,color}
% % \usepackage[margin=1in]{geometry}
% % \usepackage{mathabx}

% \usepackage[notref,notcite]{showkeys}

% \usepackage{mathtools}
% \mathtoolsset{showonlyrefs}
% \usepackage[page,header]{appendix}
% \usepackage{titletoc}

% \newcommand{\rd}{\,\mathrm{d}}
% \numberwithin{equation}{section}
% \newtheorem{theorem}{Theorem}[section]
% \newtheorem{lemma}[theorem]{Lemma}
% \newtheorem{corollary}[theorem]{Corollary}
% \newtheorem{conjecture}[theorem]{Conjecture}
% \newtheorem{proposition}[theorem]{Proposition}
% \newtheorem{definition}[theorem]{Definition}
% \newtheorem{remark}[theorem]{Remark}
% \newtheorem{assumption}{Assumption}[section]

% \newtheorem{step}{Step}
% \usepackage{apptools}
% \AtAppendix{\counterwithin{theorem}{section}}

% \usepackage{mathtools}
% \DeclarePairedDelimiter\ceil{\lceil}{\rceil}
% \DeclarePairedDelimiter\floor{\lfloor}{\rfloor}

% \def\theproposition {{\arabic{section}.\arabic{theorem}}}
% \def\thetheorem {{\arabic{section}.\arabic{theorem}}}

\documentclass[10pt,a4paper,reqno]{amsart}
\usepackage[foot]{amsaddr}

\usepackage{amsmath,amsfonts,amsthm,epsfig,graphicx,mathtools,tikz-cd, amssymb,setspace,enumerate,enumitem}
\usepackage[foot]{amsaddr}
\usepackage{caption}
\usepackage{mathrsfs,color}
\usepackage[margin=1in]{geometry}
\usepackage{mathabx}

\usepackage{mathtools}
\mathtoolsset{showonlyrefs}
\usepackage[page,header]{appendix}
\usepackage{titletoc}

\newcommand{\rd}{\,\mathrm{d}}
\numberwithin{equation}{section}
\newtheorem{theorem}{Theorem}[section]
\newtheorem{lemma}[theorem]{Lemma}

\newtheorem{proposition}[theorem]{Proposition}

\newtheorem{remark}[theorem]{Remark}

\usepackage{apptools}
\AtAppendix{\counterwithin{theorem}{section}}

\usepackage{mathtools}

\def\bx{{\bf x}}
\def\bbx{{\bf \bar{x}}}
\def\bby{{\bf \bar{y}}}
\def\bxi{\bar{\xi}}
\def\by{{\bf y}}

\def\cE{\mathcal{E}}
\def\cB{\mathcal{B}}

\def\cW{\mathcal{W}}

\def\cF{\mathcal{F}}

\def\kS{\mathfrak{S}}

\def\supp{\textnormal{supp\,}}

\def\essinf{\textnormal{essinf\,}}

\begin{document}

\title{Minimizers of 3D anisotropic interaction energies}

\author{Jos\'e A. Carrillo$^{\dagger}$, Ruiwen Shu$^{\dagger}$}
\date{\today}
\subjclass[2020]{}
\address[$\dagger$]{Mathematical Institute, University of Oxford, Oxford OX2 6GG, UK. Emails: {\tt carrillo@maths.ox.ac.uk, shu@maths.ox.ac.uk}}

\maketitle

\begin{abstract}
We study a large family of axisymmetric Riesz-type singular interaction potentials with anisotropy in three dimensions. We generalize some of the results of the recent work \cite{CS22} in two dimensions to the present setting. For potentials with linear interpolation convexity, their associated global energy minimizers are given by explicit formulas whose supports are ellipsoids. We show that for less singular anisotropic Riesz potentials, the global minimizer may collapse into one or two dimensional concentrated measures which minimize restricted isotropic Riesz interaction energies. Some partial aspects of these questions are also tackled in the intermediate range of  singularities in which one dimensional vertical collapse is not allowed.
Collapse to lower dimensional structures is proved at the critical value of the convexity but not necessarily to vertically or horizontally concentrated measures, leading to interesting open problems. 
\end{abstract}

\section{Introduction}
In this work, we focus on the analysis of the 3D anisotropic interaction energy
\begin{equation}
    E[\rho] = \frac{1}{2}\int_{\mathbb{R}^3}\int_{\mathbb{R}^3}W(\bx-\by)\rho(\by)\rd{\by}\rho(\bx)\rd{\bx}
\end{equation}
where $\rho$ is a probability measure on $\mathbb{R}^3$ and $W$ is the anisotropic interaction potential
\begin{equation}\label{W}
    W(\bx) = |\bx|^{-s}\Omega(\bbx)+|\bx|^2,\quad 0<s<3\,.
\end{equation}
Here we denote $\bbx=\frac{\bx}{|\bx|}\in S^2$ as the angle variable\footnote{This `bar' notation should be distinguished from the commonly used notation for complex conjugates, the latter never appears in this paper.}, and the anisotropic part $\Omega$ is defined on $S^2$. We will always assume $\Omega$ satisfies
\begin{equation}
    \text{{\bf (H)}: $\Omega$ is smooth, strictly positive, and $\Omega(\bbx)=\Omega(-\bbx)$.}
\end{equation}
For such $\Omega$, $W$ satisfies the condition
\begin{equation}\begin{split}
    & \text{{\bf (W)}: $W$ is even, locally integrable, lower-semicontinuous, bounded from below,} \\ & 
    \text{and bounded above and below by positive multiples of $|\bx|^{-s}$ near 0.}
\end{split}\end{equation}
As a result, $E[\rho]$ is well-defined with values in $\mathbb{R}\cup\{+\infty\}$ for any probability measure $\rho$.
We will also use the following less strict assumptions
\begin{equation}
    \text{{\bf (H0)}: $\Omega$ is smooth, and $\Omega(\bbx)=\Omega(-\bbx)$.}
\end{equation}
in some situations, and in particular, we will study the parametrized potential
\begin{equation}\label{Walpha}
    W_\alpha(\bx) = |\bx|^{-s}(1+\alpha\omega(\bbx))+|\bx|^2,\quad \alpha\ge 0
\end{equation}
with  $\omega$ satisfying
\begin{equation}
    \text{{\bf (h)}: $\omega$ is smooth, $\min \omega = 0$ and $\omega(\bbx)=\omega(-\bbx)$.}
\end{equation}
In a recent result \cite{CS22}, the authors have analysed the 2D anisotropic interaction energies in detail. The main objective of the present work is to generalize the strategy and techniques developed in \cite{CS22} to the three dimensional case. Our approaches work for general dimensions $d\geq 4$ with $d-4<s<d$, which we will not treat to avoid cumbersome technicalities.

One of the basic tools used in \cite{CS22} is the Linear Interpolation Convexity (LIC) of the interaction energy functional $E[\rho]$ defined as: for any two compactly supported probability measures $\rho_0\ne \rho_1$ with the same center of mass, the energy along their linear interpolation curve $E[(1-t)\rho_0+t\rho_1],\,t \in [0,1]$ is always strictly convex.
We proved in \cite{CS22} that strict LIC is essentially equivalent to nonnegativity of the Fourier transform of $W$ for potentials of the form \eqref{W}.
LIC was utilised before for uniqueness of global miminizers and  \cite{Lieb81,lopes2017uniqueness,MRS19,davies2021classifying,davies2021classifying2,frank2021minimizers} and uniqueness of Wasserstein-$\infty$ local minimizers in \cite{carrilloshu21}.

To see whether LIC holds for an anisotropic interaction potential, we follow a similar strategy in 3D as in the 2D case by studying the Fourier transform of the potential $W$ in details. Our main strategy for identifying global minimizers for anisotropic potentials in the LIC case is then quite direct: it suffices to check that the candidate satisfies the Euler-Lagrange conditions obtained in \cite{BCLR2} whenever the potential energy is LIC.

Let us recall some of the previous results for anisotropic potentials. The collapse to one dimensional minimizers was shown to happen in 2D for the particular case 
\begin{equation}\label{particular}
    W_{\log,\alpha}(\bx) = -\ln|\bx| + \alpha \frac{x_1^2}{|x|^2} + |\bx|^2,\quad\alpha\ge 0 ,
\end{equation} 
in the seminal paper \cite{MRS19} at the value of $\alpha$ for which the potential ceases to be LIC.  A series of recent works \cite{CMMRSV,CMMRSV2,MMRSV20,MMSRV21-1,MMSRV21-2} study this particular family of anisotropic potentials and small perturbations, showing that the the collapse to one dimensional vertical minimizers happens for $\alpha$ large enough and showing that the minimizers for smaller values than the critical LIC convexity value are given by the characteristic function of suitable ellipses. 

We recently showed in \cite{CS22} that a similar behavior in 2D is shared by a large family of potentials of the form \eqref{Walpha} with $0<s< 2$ as well as its logarithmic counterpart, generalizing the results in the previous papers. More precisely, we proved that the minimizers in the LIC range are given by push-forward of the global minimizer for isotropic Riesz potentials to suitable ellipses, showing that the ellipse-shaped minimizers are indeed generic in the LIC range. The collapse to the one dimensional vertical minimizer was also obtained for this family, showing that this again happens for $\alpha$ large enough. Moreover, we showed that generically there is a gap in between these two behaviors in which one dimensional structures appear but not necessarily are supported on vertical lines. This is based on the concept of infinitesimal concavity introduced in \cite{carrilloshu21}: finding a signed measure $\mu$ with zero mass and center of mass and arbitrarily small support such that $E[\mu]<0$. Infinitesimal concavity of an interaction energy $E[\rho]$ is a signature of collapsing to lower dimensions since we proved in \cite{carrilloshu21} that their global minimizers contain no interior points for any superlevel set of their absolute continuous parts.

Concerning the 3D case, the only available result  \cite{CMMRSV2} dealt with the one-parameter family of potentials $-\frac{1}{|\bx|} + \alpha\frac{x_1^2}{|\bx|^3} + |\bx|^2$. It showed that the global minimizer in the LIC range $-1< \alpha \le 1 $ is the characteristic function of an ellipsoid, but the behavior beyond this range was not discussed. The main objective of the present work is to generalize the results in \cite{CS22} from 2D to 3D, thus treating 3D axisymmetric potentials of the form \eqref{W} and \eqref{Walpha} with general singularities $s$ and angular profiles $\Omega$. We are able to generalize two types of results. On one hand, the LIC property of the potentials leads to ellipsoid-shaped global minimizers characterized by the push-forward of the minimizers for 3D isotropic Riesz potentials. On the other hand,  infinitesimal concavity results in the collapse to lower dimensional structures. For suitable singularity of the potential and the angular profile, such structures for large values of $\alpha$ are known to be the 1D/2D minimizers for restricted isotropic Riesz potentials. More complicated behavior is also possible, including expansion of the support as $\alpha\rightarrow\infty$. We now describe the main results of this paper in details.

Most of our results will be obtained using spherical coordinates that we denote as
\begin{equation}
    \bbx = \begin{pmatrix}
    \sin\theta\cos\mu \\
    \sin\theta\sin\mu \\
    \cos\theta \\
    \end{pmatrix},\quad 0\le \theta\le \pi,\quad 0\le \mu < 2\pi
\end{equation}
in the physical space, and
\begin{equation}\label{xi}
    \xi = |\xi|\bxi,\quad \bxi = \begin{pmatrix}
    \sin\varphi\cos\nu \\
    \sin\varphi\sin\nu \\
    \cos\varphi \\
    \end{pmatrix},\quad 0\le \varphi\le \pi,\quad 0\le \nu < 2\pi
\end{equation}
in the Fourier space. In Section \ref{sec_LIC} we first derive the formula for the Fourier transform for functions of the form $|\bx|^{-s}\Omega(\bbx)$. We will show in Lemma \ref{lem_FT} that for $\Omega$ satisfying {\bf (H0)} and $0<s<3$,
\begin{equation}
    \cF[|\bx|^{-s}\Omega(\bbx)] = |\xi|^{-3+s}\tilde{\Omega}(\bxi;s)
\end{equation}
for some function $\tilde{\Omega}(\bxi;s)$ smooth in the variable $\bxi\in S^2$. For $2<s<3$, its explicit formula is given by \eqref{lem_FT_2} as a convolution-type operator, and we also derive the explicit formulas for other values of $s$ in \eqref{lem_FT_2b} and \eqref{lem_FT_3}. Here, one important observation is the \emph{holomorphic} property of $\tilde{\Omega}(\bxi;s)$ with respect to the variable $s$ (when extending to all complex numbers $s$ with $0<\Re(s)<3$). It allows us to apply analytic continuation arguments to treat the values of $s$ for which we lack explicit formulas. We also review the results we obtained in \cite{CS22} related to the LIC property and state their generalizations to 3D. 

Then, in Section \ref{sec_ell} we study the energy minimizers for the interaction potential $W$ in \eqref{W}, in the case of LIC. For simplicity, as mentioned earlier, we focus on the case when $W$ is axisymmetric. For $0<s<3$, we prove that the unique energy minimizer is necessarily some $\rho_{a,b}$ as in \eqref{rhoab}, an axisymmetric rescaling of the minimizer of the corresponding 3D isotropic interaction energy (Theorem \ref{thm_ell}). In particular, the shape of its support is an ellipsoid. This result is analogous to its counterpart in \cite{CS22}. To prove this result in the case of $0<s<1$, we take a similar approach as in \cite{CS22}. The key step is to show that the potential generated by any $\rho_{a,b}$ is quadratic (Lemma \ref{lem_ab}), which is proved by the decomposition of the potential $|\bx|^{-s}\Omega(\bbx)$ into a convex combination of 1D potentials in different directions as in \eqref{decomp}. To extend it to a wider range of $s$, we use an analytic continuation argument based on the holomorphic properties established in the previous section.

The previously studied LIC cases include any potential of the form $W_\alpha$ in \eqref{Walpha} if 
\begin{itemize}
    \item Either $\tilde{\omega}$ (the angle function of the Fourier transform of $|\bx|^{-s}\omega(\bbx)$ as in \eqref{lem_FT_1}) is nonnegative. This include all the cases of $2\le s < 3$, by \eqref{lem_FT_2}.
    \item Or $\tilde{\omega}$ is not nonnegative but $\alpha$ is small. To be precise, $W_\alpha$ is LIC if and only if $0\le \alpha \le \alpha_L$, where
    \begin{equation}\label{alphaL}
        \alpha_L = \frac{c_s}{-\min \tilde{\omega}} \in (0,\infty)\,.
    \end{equation}
\end{itemize}
In contrast to the LIC cases, we saw in \cite{CS22} that for 2D anisotropic potentials with $0<s<1$ (for which $\tilde{\omega}$ is always sign-changing) the minimizers tend to collapse on 1D distributions for large $\alpha$. As an analogue, in Section \ref{sec_lowerdim} we exploit the cases of $W_\alpha$ for which the minimizers collapse on lower dimensional distributions for large $\alpha$. Here in the 3D case, the collapse phenomenon is richer than that of the 2D case because minimizers may collapse to 1D or 2D distributions, depending on how singular the potential is and how the function $\omega$ achieves its minimum on $S^2$. For such collapse to happen, one necessary condition is that $s$ is not too large, so that the energy is finite for such concentrated measures. Therefore, it is not surprising for us to obtain the following results, in the case of axisymmetric potentials with certain nondegeneracy conditions:
\begin{itemize}
    \item (Theorem \ref{thm_collapse1}) If $0<s<1$ and $\omega$ is minimized at $\theta=0$, then for sufficiently large $\alpha$, the energy minimizer for $W_\alpha$ is unique, given by $\rho_{\textnormal{1D}}$ (c.f. Appendix \ref{app:constants}).
    \item (Theorem \ref{thm_collapse2}) If $0<s<2$ and $\omega$ is minimized at $\theta=\pi/2$, then for sufficiently large $\alpha$, the energy minimizer for $W_\alpha$ is unique, given by $\rho_{\textnormal{2D}}$.
\end{itemize}
The proof is based on comparison arguments with specially designed potentials, similar to \cite{CS22}. We remark that the case when $\omega$ is minimized at some $\theta=\theta_0\in (0,\pi/2)$ is still open.

This part of our result answers an open problem proposed in \cite{CMMRSV2}, which is concerned with the energy minimizer for $W_\alpha$ with $s=1$, $\omega(\bbx) = \cos^2\theta$ and large $\alpha$. In fact, this $\omega$ clearly satisfies the assumptions of Theorem \ref{thm_collapse2}, and we may conclude that the unique energy minimizer for $W_\alpha$ is $\rho_{\textnormal{2D}}$ for sufficiently large $\alpha$.

Finally, in Section \ref{sec_expand} we study the case $1\le s < 2$ and $\omega$ minimized at $\theta=0$, which is in general not covered by the previously stated results. We first notice that the sign of $\tilde{\omega}$ is not completely determined (Theorem \ref{thm_s12exist}): for any fixed $1\le s<2$,
\begin{itemize}
    \item There exists axisymmetric $\omega$, minimized at $\theta=0$, such that $\tilde{\omega}$ is nonnegative (and thus $W_\alpha$ is always LIC and its minimizer is always ellipsoid-shaped).
    \item There also exists axisymmetric $\omega$, minimized at $\theta=0$, such that $\tilde{\omega}$ is sign-changing.
\end{itemize}
In either case, since $\omega$ is minimized at $\theta=0$, one might expect that the minimizers would elongate along the $x_3$-direction; also, since the power index $1\le s < 2$ does not allow the concentration to 1D measures, it would elongate to infinity. However, it turns out that this intuition is far from the truth. We will study the expansion of the minimizers from the following three aspects (for $1\le s<2$):
\begin{itemize}
    \item (Theorem \ref{thm_abalpha1}) If $\tilde{\omega}$ is strictly positive, then the ellipsoid-shaped minimizer for $W_\alpha$ necessarily expands to infinity \emph{in all dimensions} as $\alpha\rightarrow\infty$, with the ratio between its axes converging to a positive constant. This result also works in the case $2\le s < 3$. The proof is based on detailed analysis of the formula \eqref{lem_ab_2} which determines the axis lengths for the ellipsoids via $A=B=1$. 
    \item (Theorem \ref{thm_abalpha2}) There exists $\omega$, minimized at $\theta=0$ with $\tilde{\omega}\ge 0$, such that the ellipsoid-shaped minimizer for $W_\alpha$ expands in $x_1$ and $x_2$-directions but \emph{shrinks} in $x_3$-direction as $\alpha\rightarrow\infty$. It is proved by an explicit construction with an argument similar to the previous result.
    \item (Theorem \ref{thm_cylinder}) For general axisymmetric $\omega$, as long as it is positive at $\theta=\pi/2$, the minimizers for $W_\alpha$ have to expand in \emph{at least two dimensions} as $\alpha\rightarrow\infty$, i.e., these minimizers cannot be supported inside a fixed infinite cylinder for all $\alpha$. The proof is based on a comparison argument with isotropic energies, together with a scaling analysis for $\alpha$.
\end{itemize}
These results show that the condition that $\omega$ minimized at $\theta=0$ does not imply that $x_3$-direction is preferred by the minimizers for $W_\alpha$. As $\alpha$ gets large, the minimizers may or may not expand in $x_3$-direction, and they have to expand in at least two dimensions. 

We remark that the behavior of the energy minimizers for $W_\alpha$ with intermediate $\alpha$ remains largely open. This is also the case for large $\alpha$ for  $1\le s < 2$ with $\omega$ minimized at $\theta=\pi/2$ and $\tilde{\omega}$ sign-changing, for which the only thing we know is its expansion phenomenon in Theorem \ref{thm_cylinder}. In these cases the energy is not LIC, and the collapse to lower dimensions happens. In fact, we know that the potential is infinitesimal concave (Proposition \ref{prop_concave}) and thus one expects the minimizers to be supported in lower dimensional sets and/or fractals. Also, one can conduct local analysis for the generated potential, and show that 1D/2D fragments along certain directions are prohibited in any Wasserstein-$\infty$ local minimizers, as was done in Section 6 of \cite{CS22}. We expect this behavior to be related to sign information of $\tilde{\Omega}$. 

Finally, we point out that the logarithmic case corresponding formally to $s=0$ can also be included in 3D following a similar limiting procedure as in \cite[Section 7]{CS22}.

\section{Fourier transform and the LIC property}\label{sec_LIC}

\subsection{Fourier transform}

We first give the formula for the Fourier transform for functions of the form $|\bx|^{-s}\Omega(\bbx)$. We will state it for complex numbers $s$ in the region
\begin{equation}\label{kS}
    s\in \kS := \{s\in\mathbb{C}:0<\Re(s)<3\}
\end{equation}
and analyze its holomorphic property. For any function $\Omega$ defined on $S^2$ and $\bxi\in S^2$, we define
\begin{equation}
    [\Omega]_{\bxi} := \frac{1}{2\pi}\int_{\bbx\cdot\bxi=0} \Omega(\bbx)\rd{\bbx}
\end{equation}
where the integral is with respect to the induced measure on the 1D submanifold $\{\bbx\in S^2:\bbx\cdot\bxi=0\}$. It is the average of $\Omega$ on this submanifold.

\begin{lemma}\label{lem_FT}
For $\Omega$ satisfying {\bf (H0)} and any complex number $s\in \kS$ (as defined in \eqref{kS}), we have
\begin{equation}\label{lem_FT_1}
    \cF[|\bx|^{-s}\Omega(\bbx)] = |\xi|^{-3+s}\tilde{\Omega}(\bxi;s)
\end{equation}
for some function $\tilde{\Omega}(\bxi;s)$. The function $\tilde{\Omega}(\bxi;s)$ is smooth in $\bxi$ and holomorphic in $s$. Furthermore, $\partial_s\tilde{\Omega}(\bxi;s)$ is also smooth in $\bxi$. $\tilde{\Omega}(\bxi;s)$ is given by the formulas for particular real values of $s$ (omitting $s$-dependence when unnecessary):
\begin{equation}\label{lem_FT_2}
    \tilde{\Omega}(\bxi) = \tau_{3-s}\int_{S^2}|\bbx\cdot\bxi|^{-3+s}\Omega(\bbx) \rd{\bbx},\quad 2<s<3
\end{equation}
where $\tau_{3-s}$ is defined in \eqref{calc4}, and 
\begin{equation}\label{lem_FT_2b}
    \tilde{\Omega}(\bxi) = \pi[\Omega]_{\bxi},\quad s=2.
\end{equation}
\end{lemma}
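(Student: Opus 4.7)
The form \eqref{lem_FT_1} is forced by scaling: the distribution $f_s(\bx):=|\bx|^{-s}\Omega(\bbx)$ is homogeneous of degree $-s$, and is a tempered distribution for $s\in\kS$ since $\Re(s)<3$ makes it locally integrable at $0$ and $\Omega$ is bounded. Hence $\cF[f_s]$ is homogeneous of degree $-(3-s)$, which immediately produces $\cF[f_s](\xi)=|\xi|^{-3+s}\tilde\Omega(\bxi;s)$ for some distribution $\tilde\Omega(\cdot;s)$ on $S^2$. To see that $\tilde\Omega$ is actually smooth in $\bxi$, it suffices to prove $\cF[f_s]\in C^\infty(\mathbb{R}^3\setminus\{0\})$. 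I split $f_s=\chi f_s+(1-\chi)f_s$ with $\chi$ a smooth bump near $0$: the first piece is compactly supported and integrable (since $\Re(s)<3$), and multiplying by any polynomial $\bx^\alpha$ keeps it integrable, so its Fourier transform is $C^\infty$; for the second, which is smooth with $|\partial^\alpha((1-\chi)f_s)(\bx)|\lesssim|\bx|^{-\Re(s)-|\alpha|}$ at infinity, iterating integration by parts via $\xi\cdot\nabla_\bx e^{-i\bx\cdot\xi}=-i|\xi|^2 e^{-i\bx\cdot\xi}$ makes the integrand absolutely convergent for $\xi\ne 0$ and yields smoothness, with bounds uniform in $s$ on compact subsets of $\kS$.

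For the explicit formula \eqref{lem_FT_2} in the range $2<s<3$, I pass to spherical coordinates $\bx=r\bbx$ and write
\begin{equation}
    \cF[f_s](\xi) = \int_{S^2}\Omega(\bbx)\int_0^\infty r^{2-s}e^{-ir\bbx\cdot\xi}\rd{r}\rd{\bbx}.
\end{equation}
The evenness $\Omega(-\bbx)=\Omega(\bbx)$ symmetrizes the exponential to $\cos(r\bbx\cdot\xi)$, and the classical 1D identity $\int_0^\infty r^{a-1}\cos(br)\rd{r}=\Gamma(a)|b|^{-a}\cos(\pi a/2)$, valid conditionally for $0<a<1$, applied with $a=3-s$ and $b=|\xi|(\bbx\cdot\bxi)$, yields \eqref{lem_FT_2} with $\tau_{3-s}$ the resulting $\Gamma$--cosine product. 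The $s=2$ formula \eqref{lem_FT_2b} then follows from the distributional limit $s\to 2^+$: one checks that $\Gamma(3-s)\cos(\pi(3-s)/2)|b|^{-(3-s)}\to \pi\delta(b)$ weakly (equivalently, $\int_0^\infty\cos(rb)\rd{r}=\pi\delta(b)$), which concentrates the sphere integral onto the great circle $\{\bbx\cdot\bxi=0\}$ and produces $\pi[\Omega]_\bxi$ after matching constants via the definition of $[\Omega]_\bxi$.

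For holomorphy in $s\in\kS$, I apply Morera's theorem. For any Schwartz test function $\varphi$, the pairing
\begin{equation}
    s\mapsto\langle f_s,\varphi\rangle=\int_{\mathbb{R}^3}|\bx|^{-s}\Omega(\bbx)\varphi(\bx)\rd{\bx}
\end{equation}
is holomorphic in $\kS$ because the integrand is pointwise holomorphic in $s$ and dominated on any compact $K\subset\kS$ by the integrable majorant $(|\bx|^{-\underline a}+|\bx|^{-\bar a})|\Omega||\varphi|$, where $\underline a,\bar a$ bound $\Re(s)$ on $K$. Thus $s\mapsto\cF[f_s]$ is a holomorphic family of tempered distributions, and pairing with bumps localized at $\xi\ne 0$ extracts pointwise holomorphy of $\tilde\Omega(\bxi;s)$. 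Combined with the uniform-in-$s$ smoothness estimates from the first paragraph, one obtains joint smoothness/holomorphy on $S^2\times\kS$, and in particular $\partial_s\tilde\Omega(\bxi;s)$ inherits smoothness in $\bxi$ by differentiating under a Cauchy integral contour in $s$.

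The main obstacle is reconciling smoothness in $\bxi$ with the explicit formula \eqref{lem_FT_2}: naive differentiation under the integral fails since the kernel $|\bbx\cdot\bxi|^{-(3-s)}$ has locally integrable exponent in $(-1,0)$ but loses local integrability on $S^2$ after one $\bxi$-derivative. The plan circumvents this by obtaining smoothness abstractly via integration by parts on $\cF[f_s]$, which simultaneously provides the uniform-in-$s$ estimates that close the holomorphy argument.
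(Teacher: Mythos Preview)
Your proposal is correct and hits all the required points, but the route for smoothness and holomorphy differs from the paper's. The paper does not use integration by parts on the tail; instead it takes a Littlewood--Paley decomposition $|\bx|^{-s}\Omega(\bbx)=I_1+\sum_{k\ge0}\psi(2^{-k}\bx)|\bx|^{-s}\Omega(\bbx)$, observes that each dyadic piece is $2^{-ks}g(2^{-k}\bx)$ for a fixed compactly supported smooth $g$, and writes $\cF[I_2](\xi)=\sum_{k\ge0}2^{k(3-s)}\hat g(2^k\xi)$. Since $\hat g$ is Schwartz, this series and its term-by-term $s$- and $\xi$-derivatives converge absolutely on $\{\xi\ne0\}\times\kS$, so smoothness in $\bxi$, holomorphy in $s$, and smoothness of $\partial_s\tilde\Omega$ all drop out at once with no distributional intermediary. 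Your oscillatory-integral argument (iterated $\xi\cdot\nabla_\bx$ to force absolute convergence) is the standard alternative and gives the same uniform-in-$s$ bounds; it is slightly more work to justify the first integration by parts rigorously since $(1-\chi)f_s\notin L^1$, whereas the dyadic series sidesteps this entirely. Conversely, your Morera/Cauchy-contour extraction of holomorphy and of the smoothness of $\partial_s\tilde\Omega$ is a clean abstract device that the paper replaces by explicit termwise differentiation. For the explicit formulas \eqref{lem_FT_2} and \eqref{lem_FT_2b} the two proofs are essentially the same: polar coordinates, evenness to reduce to a cosine, the conditional integral $\int_0^\infty r^{2-s}\cos(rb)\rd r$, and then the approximation-of-identity limit $s\to2^+$ to collapse onto the great circle.
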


\begin{remark}
We may view $\tilde{\Omega}(\bxi;s)$ as the analytic continuation of \eqref{lem_FT_2} from $2<s<3$ to the region $s\in \kS$. The formulas corresponding to \eqref{lem_FT_1} for the range $1<s<2$ can also be obtained after additional care of the singularities. However, these formulas will never be used in the rest of this work, and thus we have postponed them to the Appendix \ref{app_FT}. Moreover, applying \eqref{lem_FT_2} reversely, we get
\begin{equation}\label{lem_FT_4}
    \Omega(\bbx) = \tau_{s}\int_{S^2}|\bbx\cdot\bxi|^{-s}\tilde{\Omega}(\bxi) \rd{\bxi},\quad 0<s<1,
\end{equation}
which gives the decomposition of the potential into a linear combination of 1D potentials as
\begin{equation}\label{decomp}
    |\bx|^{-s}\Omega(\bbx) = \tau_{s}\int_{S^2}|\bx\cdot\bxi|^{-s}\tilde{\Omega}(\bxi) \rd{\bxi},\quad 0<s<1.
\end{equation}
Similarly, from \eqref{lem_FT_2b}, we get
\begin{equation}\label{lem_FT_4b}
    \Omega(\bbx) = \pi[\tilde{\Omega}]_{\bbx},\quad s=1.
\end{equation}
\end{remark}

\begin{proof}
We first show that $\cF[|\bx|^{-s}\Omega(\bbx)]$ is a locally integrable function for any complex number $s\in \kS$. We fix the Littlewood-Paley cutoff function $\psi(\bx)$, which is radial, smooth, nonnegative, supported on $\{1\le |\bx|\le 4\}$ and $\psi(\bx)+\psi(\bx/2)=1$ on $\{2\le |\bx|\le 4\}$. Then we may decompose $|\bx|^{-s}\Omega(\bbx)$ as
\begin{equation}\label{FTdecomp}
    |\bx|^{-s}\Omega(\bbx) = \Big(1-\sum_{k=0}^\infty\psi(2^{-k}\bx)\Big)|\bx|^{-s}\Omega(\bbx) + \sum_{k=0}^\infty\psi(2^{-k}\bx)|\bx|^{-s}\Omega(\bbx) =: I_1+I_2,
\end{equation}
where the last summation $I_2$ converges in the sense of distributions. Since $\Re(s)\in (0,3)$, $I_1$ is locally integrable. $I_1$ is supported on $\{|\bx|\le 2\}$, and thus $\cF[I_1](\xi)$ is in $L^\infty$ and smooth in $\xi$ (for every fixed $s\in \kS$). Also, one can differentiate $\cF[I_1](\xi)$ with respect to $s$ by 
\begin{equation}
    \partial_s\cF[I_1](\xi) = \int_{\mathbb{R}^3} \Big(1-\sum_{k=0}^\infty\psi(2^{-k}\bx)\Big)|\bx|^{-s}(-\ln|\bx|)\Omega(\bbx)e^{-2\pi i \bx\cdot\xi}\rd{\bx}
\end{equation}
since the last integral converges absolutely. This shows $\cF[I_1](\xi)$ is holomorphic in $s$ and $\partial_s\cF[I_1](\xi)$ is smooth in $\xi$.

Each term $\psi(2^{-k}\bx)|\bx|^{-s}\Omega(\bbx)$ in the summation $I_2$ in \eqref{FTdecomp} can be written as $2^{-k s}g(2^{-k}\bx)$, where $g(\bx) = \psi(\bx)|\bx|^{-s}\Omega(\bbx)$ is a compactly supported smooth function. Therefore its Fourier transform can be computed as
\begin{equation}
    \cF[I_2](\xi) =  \sum_{k=0}^\infty 2^{k (3-s)}\hat{g}(2^k\xi),
\end{equation}
where the last summation converges in the sense of distribution, and also pointwisely for every $\xi\ne 0$ since $\hat{g}$ is a Schwartz function. This summation also converges in $L^1$ since $\|2^{k (3-s)}\hat{g}(2^k\xi)\|_{L^1} = 2^{-k\Re(s)}\|\hat{g}\|_{L^1} $ which is summable in $k$. As a consequence, $\cF[I_2]$ is in $L^1$.

It is also clear that $\cF[I_2](\xi)$ is smooth in $\xi$ for $\xi\ne 0$. Furthermore, for every $\xi\ne 0$, one can differentiate the above summation with respect to $s$ and obtain
\begin{equation}
    \partial_s \cF[I_2](\xi) =  \sum_{k=0}^\infty 2^{k (3-s)}(-k\ln 2)\hat{g}(2^k\xi).
\end{equation}
Therefore, $\cF[I_2](\xi)$ is holomorphic in $s$ for any $\xi\ne 0$. Furthermore, this expression shows that $\partial_s \cF[I_2](\xi)$ is also smooth in $\xi$ for $\xi\ne 0$.

Combining the above results, we see that for $s\in \kS$, $\cF[|\bx|^{-s}\Omega(\bbx)]$ is in $L^1+L^\infty$, smooth in $\xi$ for any $\xi\ne 0$, and holomorphic in $s$ for any fixed $\xi\ne 0$. Scaling argument (by replacing $\bx$ with $\lambda \bx,\,\lambda>0$) shows that $\cF[|\bx|^{-s}\Omega(\bbx)]$ has to take the form \eqref{lem_FT_1} for some function $\tilde{\Omega}$, and then we see that $\tilde{\Omega}(\bxi;s)$ is smooth in $\bxi$ and holomorphic in $s$, and $\partial_s\tilde{\Omega}(\bxi;s)$ is also smooth in $\bxi$.

To prove \eqref{lem_FT_2}, we fix $2<s<3$, take any $\xi\ne 0$, and calculate $\cF[|\bx|^{-s}\Omega(\bbx)](\xi)$ as an improper integral
\begin{equation}\begin{split}
    \cF[|\bx|^{-s}\Omega(\bbx)](\xi) = & \lim_{R\rightarrow\infty}\int_{\cB(0;R)} |\bx|^{-s}\Omega(\bbx)e^{-2\pi i \bx\cdot \xi}\rd{\bx} \\
    = & \lim_{R\rightarrow\infty}\int_{S^2}\int_0^R \cos(2\pi r\bbx\cdot\xi) r^{-s+2}\rd{r}\Omega(\bbx) \rd{\bbx} \\
    = & \lim_{R\rightarrow\infty}\int_{S^2}\int_0^{R/|2\pi \bbx\cdot\xi|}  r^{-s+2}\cos r\rd{r}|2\pi \bbx\cdot\xi|^{-3+s}\Omega(\bbx) \rd{\bbx} \\
    = & \int_{S^2}\int_0^\infty  r^{-s+2}\cos r\rd{r}|2\pi \bbx\cdot\xi|^{-3+s}\Omega(\bbx) \rd{\bbx} \\
\end{split}\end{equation}
where we use the fact that $\cF[|\bx|^{-s}\Omega(\bbx)]$ is real (since $|\bx|^{-s}\Omega(\bbx)$ is even) in the second equality, and dominated convergence theorem in the last equality. Formula \eqref{calc3} gives the value of the improper integral $\int_0^\infty  r^{-s+2}\cos r\rd{r} = -\Gamma(3-s)\sin\frac{(-s+2)\pi}{2}$. Therefore we obtain \eqref{lem_FT_2}.

\eqref{lem_FT_2b} can be derived by taking the limit $s\rightarrow 2^+$ in \eqref{lem_FT_2}. In fact, by rotational symmetry, we may assume $\bxi=(0,0,1)$ without loss of generality. Since $\tilde{\Omega}((0,0,1);s)$ is holomorphic in $s$, we have
\begin{equation}\begin{split}
    \tilde{\Omega}((0,0,1);2) = & \lim_{s\rightarrow 2^+}\tilde{\Omega}((0,0,1);s) = \lim_{s\rightarrow 2^+}\tau_{3-s}\int_{S^2}|\bbx\cdot(0,0,1)|^{-3+s}\Omega(\bbx) \rd{\bbx} \\
    = & \lim_{s\rightarrow 2^+}\tau_{3-s}\int_0^\pi \int_0^{2\pi}\Omega(\bbx)\rd{\mu} |\cos\theta|^{-3+s}\sin\theta\rd{\theta}
\end{split}\end{equation}
Notice that $\tau_{3-s}=(2\pi)^{-3+s}\Gamma(3-s)\cos\frac{(3-s)\pi}{2}$ behaves like $(2\pi)^{-1}\frac{(s-2)\pi}{2} = \frac{s-2}{4}$ as $s\rightarrow 2^+$, and $\int_0^\pi|\cos\theta|^{-3+s}\sin\theta\rd{\theta}=\frac{2}{s-2}$. Since $|\cos\theta|^{-3+s}\sin\theta$ concentrates near $\theta=\pi/2$ as $s\rightarrow 2^+$, we see that $2\tau_{3-s}|\cos\theta|^{-3+s}\sin\theta$ forms an approximation of identity in $\theta\in [0,\pi]$. Therefore we obtain
\begin{equation}\begin{split}
    \tilde{\Omega}((0,0,1);2) =  \frac{1}{2}\int_0^{2\pi}\Omega(\bbx|_{\theta=\pi/2})\rd{\mu} = \pi [\Omega]_{(0,0,1)}
\end{split}\end{equation}
as desired.

\end{proof}

For $1<s<3$, we give another formula for the Fourier transform.

\begin{lemma}\label{lem_FTpsi}
For $\Omega$ satisfying {\bf (H0)} and $1<s<3$, if $\Omega$ is given by
\begin{equation}\label{omega_psi}
    \Omega(\bbx) = \int_{S^2}\delta(\bbx\cdot\bby)\psi(\bby)\rd{\bby}
\end{equation}
for some smooth function $\psi$ defined on $S^2$, then the Fourier transform of $|\bx|^{-s}\Omega(\bbx)$ is given by \eqref{lem_FT_1} with
\begin{equation}\label{tomega_psi}
    \tilde{\Omega}(\bxi) = c_{s-1,\textnormal{2D}}\int_{S^2}(1-|\bby\cdot\bxi|^2)^{(-3+s)/2}\psi(\bby)\rd{\bby}
\end{equation}
\end{lemma}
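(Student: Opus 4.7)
The plan is to decompose $|\bx|^{-s}\Omega(\bbx)$ as a weighted superposition over $\bby\in S^2$ of 2D Riesz potentials living on the planes $\bby^\perp\subset\mathbb{R}^3$, compute the 3D Fourier transform of each such plane-concentrated distribution using the standard 2D Riesz formula, and then reassemble.

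The starting point, which follows from \eqref{omega_psi} together with the homogeneity relation $\delta(\bbx\cdot\bby)=|\bx|\delta(\bx\cdot\bby)$ (valid for $\bx\ne 0$), is the identity
\begin{equation}
    |\bx|^{-s}\Omega(\bbx) = \int_{S^2}K_{\bby}(\bx)\,\psi(\bby)\rd{\bby},\qquad K_{\bby}(\bx):=|\bx|^{-s+1}\delta(\bx\cdot\bby).
\end{equation}
For each $\bby\in S^2$, $K_{\bby}$ is a tempered distribution on $\mathbb{R}^3$ supported on the 2-plane $\bby^\perp$; if $y$ denotes the Euclidean coordinate along that plane, $K_{\bby}$ is simply the 2D Riesz potential $|y|^{-(s-1)}$ viewed as a delta-thickening of $\bby^\perp$. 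By rotation one may take $\bby=(0,0,1)$ and apply the classical 2D Fourier identity
\begin{equation}
    \cF_{\mathbb{R}^2}[|y|^{-(s-1)}](\eta) = c_{s-1,\textnormal{2D}}|\eta|^{-3+s},
\end{equation}
which is valid since $0<s-1<2$, yielding
\begin{equation}
    \cF[K_{\bby}](\xi) = c_{s-1,\textnormal{2D}}\bigl(|\xi|^2-(\bby\cdot\xi)^2\bigr)^{(-3+s)/2} = c_{s-1,\textnormal{2D}}|\xi|^{-3+s}(1-|\bby\cdot\bxi|^2)^{(-3+s)/2}.
\end{equation}
Integrating against $\psi(\bby)\rd{\bby}$ and swapping the $\bby$-integral with the Fourier transform then produces the claimed \eqref{tomega_psi}.

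The main obstacle is the rigorous justification of these formal manipulations. The cleanest check is absolute convergence of the resulting $\bby$-integral: in spherical coordinates $(\theta,\mu)$ centered at $\bxi$, the singular factor $(1-|\bby\cdot\bxi|^2)^{(-3+s)/2}$ combined with the surface measure $\sin\theta\,\rd{\theta}\rd{\mu}$ becomes $\sin^{s-2}\theta\,\rd{\theta}\rd{\mu}$, which is integrable on $(0,\pi)$ exactly when $s>1$, matching the hypothesis of the lemma. Granted this absolute convergence, one tests both sides of the proposed identity against a Schwartz function, uses the 2D Riesz formula on each plane in its tempered-distribution form, and applies Fubini; pointwise agreement of the smooth function $\tilde{\Omega}(\bxi;s)$ from Lemma \ref{lem_FT} with the stated integral for every $\xi\ne 0$ then closes the argument.
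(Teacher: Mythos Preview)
Your proof is correct and follows essentially the same route as the paper: decompose $|\bx|^{-s}\Omega(\bbx)$ as a $\psi$-weighted superposition of plane-supported 2D Riesz kernels $|\bx|^{-s}\delta(\bbx\cdot\bby)=|\bx|^{-s+1}\delta(\bx\cdot\bby)$, compute the 3D Fourier transform of each via the 2D Riesz formula after rotating $\bby$ to $(0,0,1)$, and integrate in $\bby$. Your explicit check of absolute convergence of the $\bby$-integral and the Schwartz-testing/Fubini justification go a bit beyond what the paper spells out, but the underlying argument is the same.
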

Here, for $0<s<2$, the constant $c_{s,\textnormal{2D}}=\pi^{s-1}\frac{\Gamma((2-s)/2)}{\Gamma(s/2)}>0$ refers to the `$c_s$' constant in 2D as in \cite{CS22}. The $\delta$ in \eqref{omega_psi} refers to the 1D Dirac delta function.

It is clear that if $\psi$ is axisymmetric, so are $\Omega$ and $\tilde{\Omega}$.

\begin{proof}
We first write the function $|\bx|^{-s}\Omega(\bbx)$ as
\begin{equation}
    |\bx|^{-s}\Omega(\bbx) = \int_{S^2}|\bx|^{-s}\delta(\bbx\cdot\bby)\psi(\bby)\rd{\bby}
\end{equation}

For $\bby=(0,0,1)$, we have\footnote{Here we use the rescaling rule $\delta(\lambda t)=\lambda^{-1}\delta(t),\,\lambda>0$ for the 1D Dirac delta function.}
\begin{equation}
    |\bx|^{-s}\delta(\bbx\cdot\bby)=|\bx|^{-s}\delta\Big(\frac{x_3}{|\bx|}\Big)=|x_1^2+x_2^2|^{(1-s)/2}\delta(x_3)
\end{equation}
whose Fourier transform is given by $c_{s-1,\textnormal{2D}}|\xi_1^2+\xi_2^2|^{(-3+s)/2}$ as a function in $\xi\in\mathbb{R}^3$, for any $1< s < 3$. We may write $|\xi_1^2+\xi_2^2|^{(-3+s)/2}=(|\xi|^2-|\bby\cdot\xi|^2)^{(-3+s)/2}$ for $\bby=(0,0,1)$. Therefore, applying this with suitable rotation and integrating in $\bby$, we obtain
\begin{equation}
    \cF[|\bx|^{-s}\Omega(\bbx)] = c_{s-1,\textnormal{2D}}\int_{S^2}(|\xi|^2-|\bby\cdot\xi|^2)^{(-3+s)/2}\psi(\bby)\rd{\bby} = |\xi|^{-3+s}\tilde{\Omega}(\bxi)
\end{equation}
with $\tilde{\Omega}$ given by \eqref{tomega_psi}.
\end{proof}

For the purpose of later applications, we take $\psi$ as the rescalings of a fixed mollifier in the previous lemma and analyze the behavior of $\Omega$ and $\tilde{\Omega}$.
\begin{lemma}\label{lem_FTpsi2}
Assume $1<s<3$. Take a fixed nonnegative smooth even function $\psi_1(\theta)$ supported on $\theta\in[-1,1]$, and define $\psi_\epsilon(\theta)=\frac{1}{\epsilon^2}\psi_1(\frac{\theta}{\epsilon})$ for small $\epsilon>0$. View $\psi_\epsilon$ as an axisymmetric function on $S^2$, and denote the resulting $\Omega$ in \eqref{omega_psi} as $\Omega^\epsilon$. Then
\begin{itemize}
    \item $\Omega^\epsilon$ satisfies {\bf (H0)}, is nonnegative, axisymmetric, and supported on $\theta\in [\pi/2-\epsilon,\pi/2+\epsilon]$.
    \item $\|\Omega^\epsilon\|_{L^\infty} \sim \epsilon^{-1}$, with $\Omega^\epsilon(\theta) \sim \epsilon^{-1}$ for $\theta\in [\pi/2-\epsilon/2,\pi/2+\epsilon/2]$.
    \item $\|\tilde{\Omega}^\epsilon\|_{L^\infty} \sim \epsilon^{-3+s}$, with $\tilde{\Omega}^\epsilon(\varphi) \sim \epsilon^{-3+s}$ for $\varphi\in [0,\epsilon/2]$.
\end{itemize}
Here $\sim$ means bounded above and below by positive constants. Furthermore, for properly chosen $\psi_1$ (to be specified in the proof) and sufficiently small $\epsilon$, we have
\begin{itemize}
    \item $\tilde{\Omega}^\epsilon$, as a function of $\varphi$, is decreasing in $\varphi\in [0,\pi/2]$.
    \item There exist positive constants $c_{\psi,1},c_{\psi,2}$, independent of $\epsilon$, such that
    \begin{equation}
        \tilde{\Omega}^\epsilon(\varphi) \le \Big(1-c_{\psi,1}\frac{\varphi^2}{\epsilon^2}\Big)\tilde{\Omega}^\epsilon(0),\quad \forall \varphi\in [0,c_{\psi,2}\epsilon]
    \end{equation}
    
\end{itemize}

\end{lemma}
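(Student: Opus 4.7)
The plan is to verify the six assertions in order: the first four follow from direct evaluation of the formulas \eqref{omega_psi} and \eqref{tomega_psi}, while the last two require a specific choice of $\psi_1$ together with a scaling analysis of the Fourier kernel near the north pole.

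\textbf{Structure and pointwise bounds.} Nonnegativity, axisymmetry and the parity $\Omega^\epsilon(-\bbx)=\Omega^\epsilon(\bbx)$ are immediate from \eqref{omega_psi}, and the support in $\{|\theta-\pi/2|<\epsilon\}$ follows because $\delta(\bbx\cdot\bby)\ne 0$ forces $\bbx\perp\bby$ while $\bby$ is restricted to the $\epsilon$-cap at the north pole. Evaluating the delta distribution as an arc-length integral on the great circle $\bbx^\perp$ and rescaling $u=\theta_y/\epsilon$ gives an explicit formula $\Omega^\epsilon(\theta) = (2/\epsilon)\int_{|\eta|/\epsilon}^1 u\,\psi_1(u)(u^2-(\eta/\epsilon)^2)^{-1/2}\,\rd u$ with $\eta=\pi/2-\theta$, from which $\|\Omega^\epsilon\|_\infty\sim\epsilon^{-1}$ and the lower bound on $[\pi/2-\epsilon/2,\pi/2+\epsilon/2]$ drop out at once. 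For $\tilde\Omega^\epsilon$, the same substitution $u=\theta_y/\epsilon$, $v=\varphi/\epsilon$ in \eqref{tomega_psi} together with $1-|\bby\cdot\bxi|^2\simeq\epsilon^2(u^2+v^2-2uv\cos\mu)$ near the pole reduces the integral to $c_{s-1,\textnormal{2D}}\epsilon^{s-3}G(v)$ with
\begin{equation*}
G(v)=\int_0^1\psi_1(u)\,u\int_0^{2\pi}(u^2+v^2-2uv\cos\mu)^{(s-3)/2}\,\rd\mu\,\rd u.
\end{equation*}
Since $s>1$, the singularity at $(u,\mu)=(v,0)$ has integrable 2D order, so $G(v)$ is comparable to $G(0)$ uniformly for $v\in[0,1/2]$, which gives $\tilde\Omega^\epsilon(\varphi)\sim\epsilon^{s-3}$ on $[0,\epsilon/2]$; the global $L^\infty$ size follows because for $\varphi\gg\epsilon$ a direct expansion yields $\tilde\Omega^\epsilon(\varphi)\sim(\sin\varphi)^{s-3}\ll\epsilon^{s-3}$.

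\textbf{Quadratic decay near $\varphi=0$.} I would expand the inner $\mu$-integral via the Gegenbauer generating function $(1-2tx+t^2)^{(s-3)/2}=\sum_n C_n^{((3-s)/2)}(x)\,t^n$, which produces the factorization $\int_0^{2\pi}(u^2+v^2-2uv\cos\mu)^{(s-3)/2}\rd\mu = \max(u,v)^{s-3}F_s(\min(u,v)/\max(u,v))$ with $F_s(t)=\sum_{n\ge 0\text{ even}}I_n^{(s)}t^n$ and $I_n^{(s)}=\int_0^{2\pi}C_n^{((3-s)/2)}(\cos\mu)\,\rd\mu$. Splitting $\int_0^1\rd u = \int_0^v+\int_v^1$ and integrating each power of $v/u$ or $u/v$ separately decomposes $G(v)$ into the constant $G(0)$, a fractional-power $v^{s-1}$ piece, and an even integer-power series $\sum v^{2n}$. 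The substitution $\mu\mapsto\mu+\pi$ in the defining integral shows $G(v)=G(-v)$, and by smoothness of $G$ away from $v=0$ this forces the $v^{s-1}$ coefficient to vanish. The leading remaining correction is $-I_2^{(s)}v^2/(3-s)$, and the explicit formula $C_2^{(\alpha)}(x)=2\alpha(\alpha+1)x^2-\alpha$ with $\alpha=(3-s)/2$ gives $I_2^{(s)}=\pi(3-s)^2/2$, so
\begin{equation*}
G(v)=G(0)-\tfrac{\pi(3-s)}{2}v^2+O(v^4)\qquad\text{as }v\to 0.
\end{equation*}
Since $3-s>0$, dividing by $G(0)=2\pi\int_0^1\psi_1(u)u^{s-2}\,\rd u$ and choosing $c_{\psi,2}$ small enough to dominate the $O(v^4)$ remainder yields the claimed bound with $c_{\psi,1}$ any constant strictly less than $(3-s)/(4\int\psi_1 u^{s-2}\rd u)$.

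\textbf{Monotonicity and main obstacle.} For the monotonicity assertion I would fix $\psi_1$ to be a smooth nonnegative even bump supported in a small sub-interval $[-\delta,\delta]\subset(-1,1)$ with $\psi_1(0)>0$, and split $[0,\pi/2]$ into $[0,c_{\psi,2}\epsilon]$ and $[c_{\psi,2}\epsilon,\pi/2]$. Strict decrease on the inner interval is given by the quadratic bound above. On the outer interval I would differentiate under the integral in \eqref{tomega_psi}: the derivative of $(1-|\bby\cdot\bxi|^2)^{(s-3)/2}$ in $\varphi$ factors as $(3-s)(\bby\cdot\bxi)(1-|\bby\cdot\bxi|^2)^{(s-5)/2}\partial_\varphi(\bby\cdot\bxi)$, and with $\partial_\varphi(\bby\cdot\bxi) = \cos\varphi\sin\theta_y\cos\mu_y - \sin\varphi\cos\theta_y$ the worst-case sign (at $\mu_y=0$) is determined by whether $\tan\theta_y<\tan\varphi$; choosing $\delta<c_{\psi,2}$ forces $\theta_y\le\delta\epsilon<c_{\psi,2}\epsilon\le\varphi$ throughout the support of $\psi_\epsilon$, and together with $\bby\cdot\bxi>0$ on this range (since $\theta_y+\varphi<\pi/2+\epsilon$) one gets $\partial_\varphi\tilde\Omega^\epsilon<0$ throughout $[c_{\psi,2}\epsilon,\pi/2]$. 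The most delicate step in this whole program is the cancellation of the $v^{s-1}$ coefficient in the $G$-expansion: a naive term-by-term Taylor expansion of the inner kernel in $v/u$ introduces a non-integrable $u^{s-4}$ factor at $u=0$, so the vanishing of the fractional power cannot be read off from the formal small-$v$ expansion; I plan to bypass this difficulty by using the evenness $G(v)=G(-v)$ as a hard constraint rather than verifying the underlying Gegenbauer moment identity directly.
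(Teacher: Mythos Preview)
Your treatment of items 1--3 is fine and close in spirit to the paper's. The trouble lies in items 4 and 5.

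\textbf{Item 5.} Your formula for the $v^2$ coefficient is incorrect: it cannot be independent of $\psi_1$. The clean way to see what is really going on is to recognise that $G(v) = \int_{\mathbb{R}^2}\Psi_1(\by)\,|\by-(v,0)|^{s-3}\rd\by$ with $\Psi_1(\by)=\psi_1(|\by|)$; this is a convolution of a locally integrable kernel with a smooth compactly supported function, hence smooth everywhere (this, not ``smoothness away from $0$'', is what kills a putative $|v|^{s-1}$ term---note $|v|^{s-1}$ is itself even). Two differentiations on $\Psi_1$ give $G''(0)=\pi(3-s)\int_0^1\psi_1'(r)\,r^{s-3}\rd r$, which depends on the full profile of $\psi_1$ and is negative only when $\psi_1$ is essentially decreasing on $[0,1]$; for a bump supported away from $0$ it is positive. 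Your Gegenbauer bookkeeping breaks because the $n=2$ term $I_2 v^2\int_v^1\psi_1(u)u^{s-4}\rd u$ does not produce a clean $v^2$: when $\psi_1(0)\ne 0$ the inner integral diverges like $v^{s-3}$, so to leading order this term is $O(v^{s-1})$, while the genuine $v^2$ piece collects contributions from every $n$ and cannot be read off as $-I_2/(3-s)$.

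\textbf{Item 4.} Your outer-interval argument requires $\theta_y\le\delta\epsilon<c_{\psi,2}\epsilon$, i.e.\ $\delta<c_{\psi,2}$; but the range of validity of the quadratic bound scales with the support radius of $\psi_1$, so $c_{\psi,2}\sim\delta$ and the constraint becomes $\delta<c\delta$ with no guarantee that the implied constant exceeds $1$. The paper avoids both difficulties by fixing $\psi_1(\theta)=\exp(-1/(1-\theta^2))$ and working directly on $S^2$. For monotonicity it invokes a rearrangement fact: if $g_1,g_2$ are nonnegative, even on $[-1,1]$, and increasing on $[0,1]$, then $\varphi\mapsto\int_{S^2}g_1(\bby\cdot\bxi_\varphi)\,g_2(\bby\cdot(0,0,1))\rd\bby$ is decreasing---this needs only that $\psi_1$ be decreasing, no smallness. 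For the quadratic decay it computes $\Delta_{S^2}\tilde\Omega^\epsilon(0)$ by moving the spherical Laplacian onto $\psi_\epsilon$ (the kernel commutes with $\Delta_{S^2}$), then shows negativity by verifying that $\Delta_{S^2}\psi_\epsilon$ changes sign exactly once on $[0,\epsilon]$ and pairing it against the decreasing kernel $(\sin\theta)^{s-3}$.
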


See Figure \ref{fig:lem_FTpsi} for illustration.
\begin{figure}
    \centering
    \includegraphics[width=0.49\textwidth]{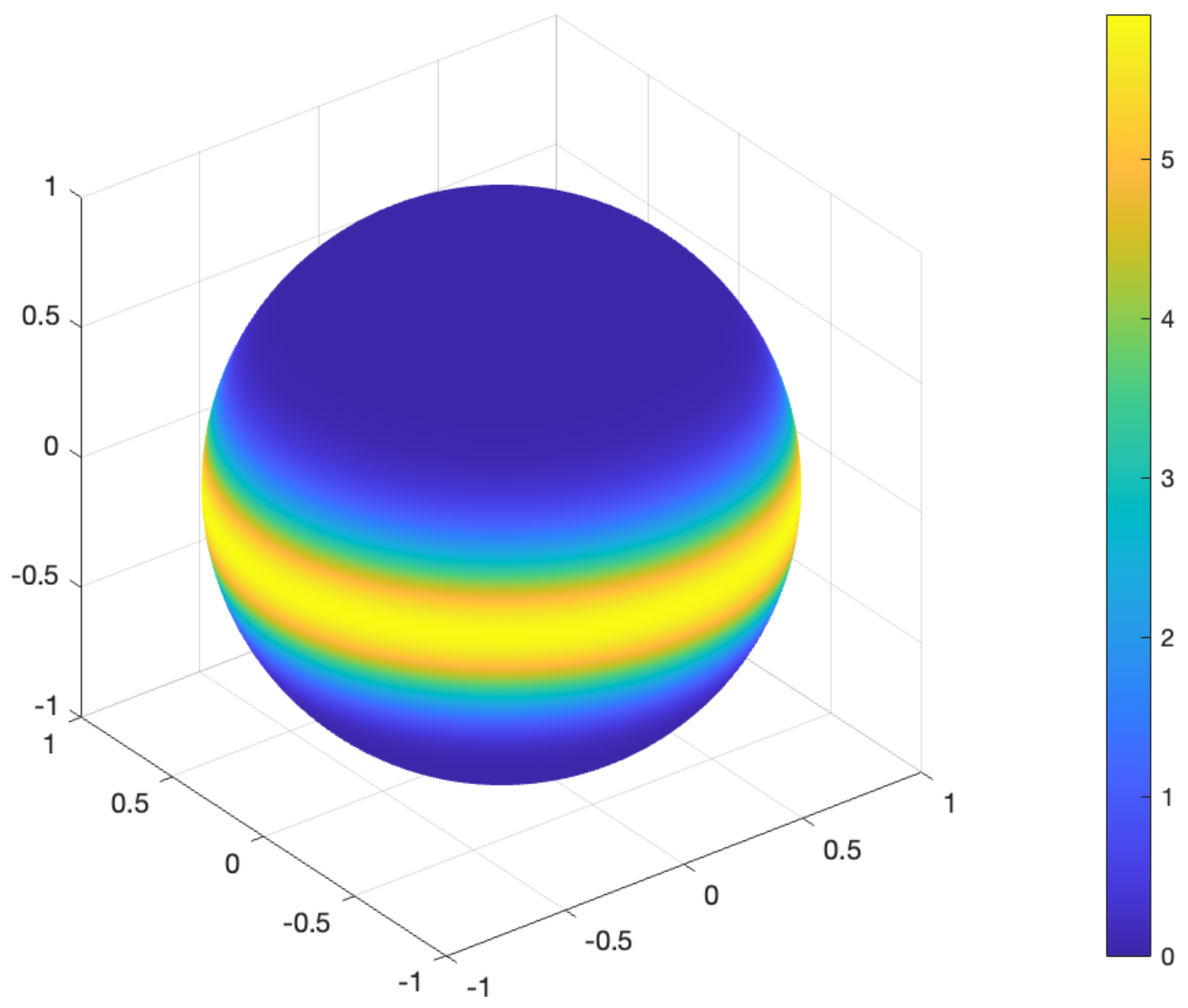}
    \includegraphics[width=0.49\textwidth]{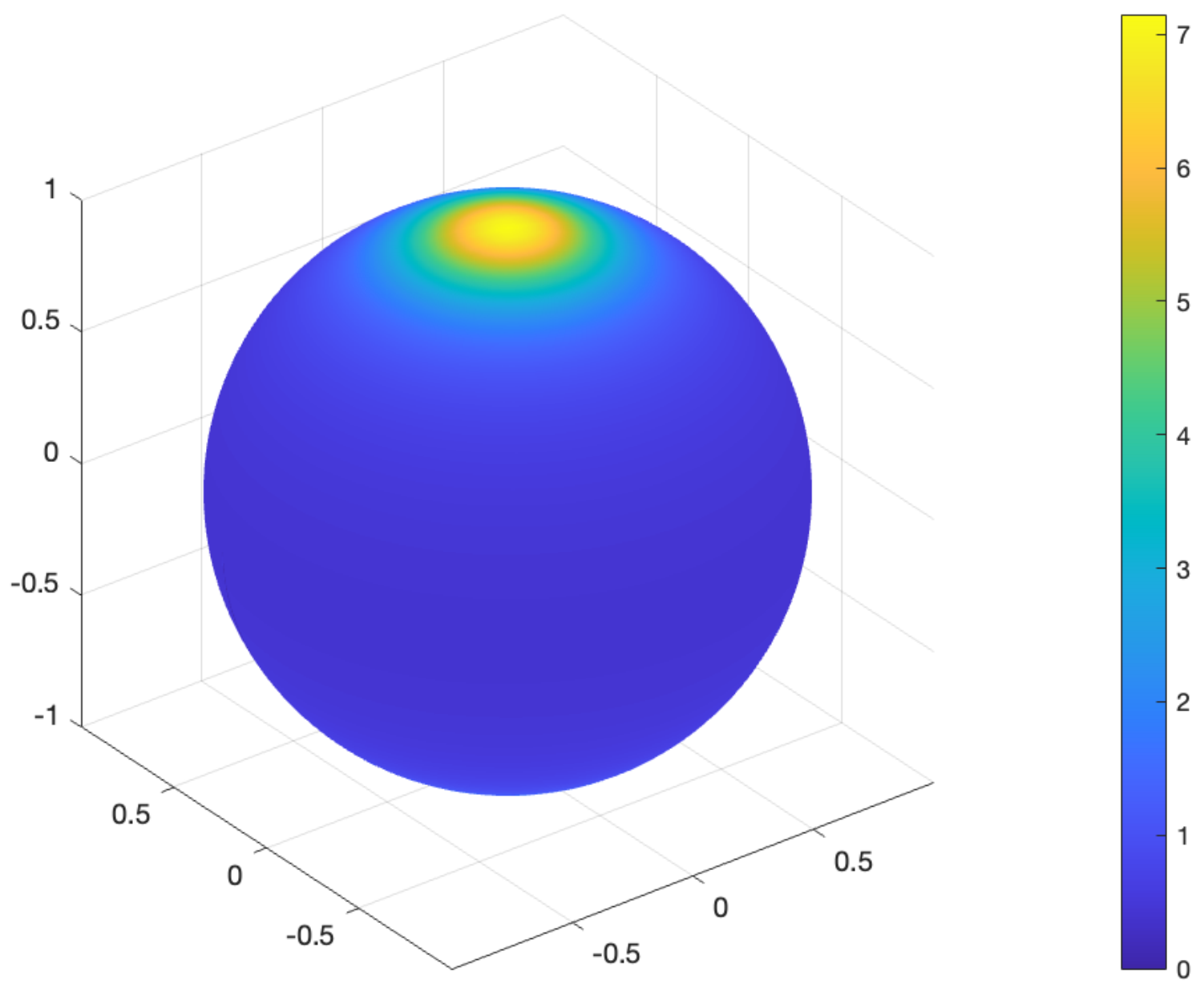}
    \caption{Illustration of Lemma \ref{lem_FTpsi2}. Left: $\Omega^\epsilon$ is concentrated near $\theta=\pi/2$; right: $\tilde{\Omega}^\epsilon$ is concentrated near $\varphi=0$.}
    \label{fig:lem_FTpsi}
\end{figure}

\begin{proof}
Item 1 is clear. To see item 2, it suffices to notice that $\Omega^\epsilon(\bbx)$ is an average of the spherical function $\delta((\cdot)\cdot(0,0,1))$ on a ball of radius $\epsilon$ centered at $\bbx$, up to a negligible curvature effect from the sphere. To see item 3, we notice that $\tilde{\Omega}^\epsilon(\bxi)$ is an average of the spherical function $c(1-|(\cdot)\cdot(0,0,1)|^2)^{(-3+s)/2}$ on a ball of radius $\epsilon$ centered at $\bxi$. The last function has a singularity $c(1-\xi_3^2)^{(-3+s)/2} \sim c (\sin\varphi)^{-3+s}$ near $\varphi=0$. Therefore item 3 follows.

To prove items 4 and 5, we take a specific $\psi_1$ as
\begin{equation}
    \psi_1(\theta) = \exp\left(-\frac{1}{1-\theta^2}\right)\chi_{(-1,1)}(\theta).
\end{equation}
It is clear that $\psi_1(\theta)$ is strictly decreasing on $\theta\in [0,1]$. Furthermore, we claim that\footnote{Here $\Delta_{S^2}$ denotes the Laplace-Beltrami operator on $S^2$.} $\Delta_{S^2}\psi_\epsilon(\theta)$ strictly negative on $[0,\tau_{\epsilon}\epsilon)$ and strictly positive on $(\tau_{\epsilon}\epsilon,\epsilon)$ for any $\epsilon>0$ sufficiently small and some $\tau_{\epsilon}\in (0.1,0.9)$. In fact, explicit calculation shows that for any $\theta\in (0,1)$,
\begin{equation}
    \partial_\theta\psi_1(\theta) = \psi_1(\theta)\frac{-2\theta}{(1-\theta^2)^2},\quad \partial_{\theta\theta}\psi_1(\theta) = \psi_1(\theta)\frac{6\theta^4-2}{(1-\theta^2)^4}.
\end{equation}
Therefore, we deduce
\begin{equation}\begin{split}
    \sin\theta\Delta_{S^2}\psi_\epsilon(\theta) = & \partial_\theta(\sin\theta \partial_\theta\psi_\epsilon(\theta)) = \cos\theta \partial_\theta\psi_\epsilon(\theta) + \sin\theta \partial_{\theta\theta}\psi_\epsilon(\theta) \\
    = & \frac{1}{\epsilon^3}\cos\theta (\partial_\theta\psi_1)(\frac{\theta}{\epsilon}) + \frac{1}{\epsilon^4}\sin\theta (\partial_{\theta\theta}\psi_1)(\frac{\theta}{\epsilon}) \\
    = & \frac{1}{\epsilon^4}\psi_1(\frac{\theta}{\epsilon})\left(\epsilon\cos\theta \frac{-2(\theta/\epsilon)}{(1-(\theta/\epsilon)^2)^2} + \sin\theta \frac{6(\theta/\epsilon)^4-2}{(1-(\theta/\epsilon)^2)^4} \right).
\end{split}\end{equation}
We further compute
\begin{align*}
    \sin\epsilon\theta\Delta_{S^2}\psi_\epsilon(\epsilon\theta) 
    = & \frac{\theta\cos\epsilon\theta}{\epsilon^3}\psi_1(\theta)\left(\frac{-2}{(1-\theta^2)^2} + \frac{\tan\epsilon\theta}{\epsilon\theta} \frac{6\theta^4-2}{(1-\theta^2)^4} \right) \\
    = & \frac{\theta\cos\epsilon\theta}{\epsilon^3(1-\theta^2)^4}\psi_1(\theta)\Big(\!\!-2(1-\theta^2)^2 + \frac{\tan\epsilon\theta}{\epsilon\theta} (6\theta^4-2)\Big) \\
    = & \frac{\theta\cos\epsilon\theta}{\epsilon^3(1-\theta^2)^4}\psi_1(\theta)\Big[\Big(6\frac{\tan\epsilon\theta}{\epsilon\theta}-2\Big)\theta^4 + 4\theta^2 + \Big(\!\!-2 -2\frac{\tan\epsilon\theta}{\epsilon\theta}\Big)\Big]. 
\end{align*}
Using the fact that $\frac{\tan\epsilon\theta}{\epsilon\theta}$ is close to 1 for small $\epsilon>0$ and $\theta\in (0,1)$, one can show that the last bracket is well-approximated by $4\theta^4 + 4\theta^2 - 4$. In fact, as $\epsilon\rightarrow 0^+$, this function and its derivative with respect to $\theta^2$ converge to those of the polynomial uniformly on $[0,1]$. This polynomial has positive derivative (with respect to $\theta^2$) and changes sign once in $\theta\in (0,1)$, at $\theta_*=((\sqrt{5}-1)/2)^{1/2}\approx 0.786$. Therefore the last bracket, and thus $\Delta_{S^2}\psi_\epsilon(\epsilon\theta)$, only changes sign once in $\theta\in (0,1)$ close to $\theta_*$, which proves the claim.

We notice a basic fact that the function $\varphi\mapsto \int_{S^2}g_1(\bby\cdot(\sin\varphi,0,\cos\varphi))g_2(\bby\cdot(0,0,1))\rd{\bby}$ is decreasing in $\varphi\in [0,\pi/2]$ provided that $g_i(t),\,i=1,2$, are nonnegative even functions on $[-1,1]$ and increasing on $[0,1]$. To see this, it suffices to check it for $g_i(t) = \chi_{[a_i,1]}(|t|)$ with $a_1,a_2\in [0,1]$, for which the spherical integral can be calculated explicitly.

Then item 4 follows from \eqref{tomega_psi} and the fact that the input $\psi_\epsilon$ is decreasing in $\theta$ by construction, due to the application of the previous fact, with $g_1(t)=(1-t^2)^{(-3+s)/2}$ and $g_2(\cos\theta)=\psi_\epsilon(\theta)$.

To see item 5, we notice from \eqref{tomega_psi} that (as a function of $\varphi$)
\begin{equation}
    \Delta_{S^2}\tilde{\Omega}^\epsilon(0) =  c_{s-1,\textnormal{2D}}\int_{S^2}(1-|\bbx\cdot(0,0,1)|^2)^{(-3+s)/2}\Delta_{S^2}\psi_\epsilon (\bbx)\rd{\bbx}
\end{equation}
since the integral operator on $S^2$ given by the integral kernel $(1-|\bbx\cdot\bxi|^2)^{(-3+s)/2}$ commutes with $\Delta_{S^2}$. By our choice of $\psi_\epsilon$, the function $\Delta_{S^2}\psi_\epsilon (\bbx)$ is axisymmetric, strictly negative on $\theta\in [0,\tau_{\epsilon}\epsilon)$, strictly positive on $\theta\in (\tau_{\epsilon}\epsilon,\epsilon)$. It is also mean-zero on $\{\bbx\in S^2:\theta\le \epsilon\}$ because $\psi_\epsilon$ is compactly-supported on this set. The function $(1-|\bbx\cdot(0,0,1)|^2)^{(-3+s)/2}$ is axisymmetric, positive and decreasing in $\theta$ for $\theta\in (0,\pi/2]$. Therefore we see that $\Delta_{S^2}\tilde{\Omega}^\epsilon(0)<0$. In fact, by analyzing the scaling that $(1-|\bbx\cdot(0,0,1)|^2)^{(-3+s)/2}\sim \theta^{-3+s}$ and $|\Delta_{S^2}\psi_\epsilon (\bbx)|\sim \epsilon^{-4}$ for $\theta \le c\epsilon$, one can quantify it as
\begin{equation}
    \Delta_{S^2}\tilde{\Omega}^\epsilon(0) \le - c \epsilon^{-5+s}
\end{equation}
for sufficiently small $\epsilon$.

Combined with a similar scaling argument for the spherical gradient of $\Delta_{S^2}\tilde{\Omega}^\epsilon$, one can show that the above inequality is also true for $\Delta_{S^2}\tilde{\Omega}^\epsilon(\varphi)$ with $\varphi \le c\epsilon$. This gives
\begin{equation}
    \tilde{\Omega}^\epsilon(\varphi) \le \tilde{\Omega}^\epsilon(0) - c\epsilon^{-5+s}\varphi^2
\end{equation}
for $\varphi \le c\epsilon$. Combined with item 3, we get item 5.

\end{proof}

\subsection{Results on the LIC property}

In this subsection we state some results on the existence of energy minimizers and the LIC property of the potential $W$ given by \eqref{W}, as generalization of those in \cite{CS22} to 3D. The proofs are similar to those in \cite{CS22} and thus omitted.

\begin{lemma}\label{lem_exist}
Assume $0<s<3$. Then for any $W$ in \eqref{W} with $\Omega$ satisfying {\bf (H)}, there exists a compactly supported energy minimizer in the class of probability measures on $\mathbb{R}^3$. The same is true for $W_\alpha$ in \eqref{Walpha} with $\alpha\ge 0$ if $\omega$ satisfies {\bf (h)}. 

If we further assume $0<s<1$, then any minimizer for $W_\alpha$ with zero center of mass is supported in $\cB(0;R)$ for some $R>0$ independent of $\alpha$. The same is true if we instead assume $1\leq s<2$ and $\omega(\bbx)=0$ for any $\bbx$ with $\theta=\pi/2$.
\end{lemma}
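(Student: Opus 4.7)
The plan is to follow the direct method of the calculus of variations for existence of a minimizer, use the Euler--Lagrange conditions from \cite{BCLR2} together with the coercivity of the confinement $|\bx|^2$ for compact support, and finally exploit the vanishing directions of $\omega$ on $S^2$ to build degenerate test measures whose $\alpha$-dependent energy vanishes, yielding the uniform-in-$\alpha$ support bound. This adapts the 2D blueprint of \cite{CS22} essentially verbatim.

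For existence, I would take a minimizing sequence $\{\rho_n\}$ and, by translation invariance of $E$, assume each $\rho_n$ has zero center of mass. Since $\Omega>0$ by {\bf (H)} (respectively $\omega\ge 0$ by {\bf (h)}), the singular part of the potential is nonnegative, so
\[
E[\rho_n] \;\ge\; \tfrac{1}{2}\iint|\bx-\by|^2\,\rho_n(\bx)\rho_n(\by)\,d\bx\,d\by \;=\; \int|\bx|^2\,\rho_n(\bx)\,d\bx,
\]
which is bounded along the sequence, giving tightness. Extracting a narrowly convergent subsequence $\rho_n\rightharpoonup\rho_*$ and invoking lower semicontinuity of $E$ under narrow convergence (standard consequence of {\bf (W)} via truncation $W\wedge N$ and monotone convergence) shows that $\rho_*$ is a minimizer. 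The Euler--Lagrange conditions of \cite{BCLR2} then yield $W*\rho_* = \lambda := 2E[\rho_*]$ quasi-everywhere on $\supp\rho_*$. Combining with the pointwise bound $W*\rho_*(\bx)\ge |\bx|^2 + \int|\by|^2\rho_*(\by)\,d\by$ (from positivity of the singular part and zero center of mass), I deduce $|\bx|^2\le \lambda$ on $\supp\rho_*$, hence $\supp\rho_*\subset\cB(0;\sqrt{\lambda})$. The same argument applies with $W_\alpha$ replacing $W$ for each fixed $\alpha\ge 0$.

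For the uniform-in-$\alpha$ bound, the key idea is to find a test measure $\mu$ on which the $\alpha$-dependent part of the energy vanishes identically, so that $E[\rho_\alpha]\le E_\alpha[\mu]=E_0[\mu]$ becomes independent of $\alpha$; feeding this into the Euler--Lagrange bound above yields $\supp\rho_\alpha\subset\cB(0;\sqrt{2E_0[\mu]})$. For $0<s<1$, I would pick $\bbx_0\in S^2$ with $\omega(\bbx_0)=0$ (possible by {\bf (h)} and compactness of $S^2$) and take $\mu$ to be the uniform probability measure on the segment $\{t\bbx_0:t\in[-1,1]\}$. For $\bx,\by$ on this segment, $(\bx-\by)/|\bx-\by|=\pm\bbx_0$, so evenness of $\omega$ forces $\omega((\bx-\by)/|\bx-\by|)=0$ on the pairing, and the remaining one-dimensional Riesz integral $\iint|t-\tau|^{-s}\,dt\,d\tau$ converges since $s<1$. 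For $1\le s<2$ with $\omega\equiv 0$ on $\{\theta=\pi/2\}$, I would instead take $\mu$ uniform on the unit disk in the equatorial plane $\{x_3=0\}$; differences of such points lie again in this plane, so $\omega$ still vanishes on the pairing, and the two-dimensional Riesz integral converges because $s<2$. In both cases $E_0[\mu]$ is a finite constant depending only on $s$.

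The main technical obstacle is the narrow lower semicontinuity of $E$ in the presence of the singular Riesz kernel, handled by the standard truncation argument identical to its 2D version. Everything else is elementary once the right degenerate test measure $\mu$ is chosen, the guiding principle being that the Hausdorff dimension of the vanishing set of $\omega$ prescribed in each case (a line for $s<1$, the equator for $1\le s<2$) is exactly what is needed for the Riesz kernel $|\bx|^{-s}$ to define a finite self-energy on a compactly supported measure living on that set.
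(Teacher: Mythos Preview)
Your proposal is correct and follows essentially the same route as the paper, which omits the proof and refers to \cite{CS22}. The paper's explanatory paragraph after the lemma records exactly your key idea for the uniform-in-$\alpha$ bound: build a test measure on which the $\alpha$-term vanishes (the paper names a rotated $\rho_{\textnormal{1D}}$ for $0<s<1$ and $\rho_{\textnormal{2D}}$ for $1\le s<2$ with $\omega|_{\theta=\pi/2}=0$, while you take uniform measures on a segment or disk---an immaterial variation), then feed the $\alpha$-independent energy bound through the Euler--Lagrange inequality $(W_\alpha*\rho_\alpha)(\bx)\le 2E_\alpha[\rho_\alpha]$ on $\supp\rho_\alpha$ together with $W_\alpha*\rho_\alpha(\bx)\ge |\bx|^2$ for mean-zero $\rho_\alpha$.
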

In the second part of the above lemma, the extra conditions guarantee that $\min_\rho E_\alpha[\rho]$ is uniformly bounded in $\alpha$ (for $0<s<1$, a possibly rotated $\rho_{\textnormal{1D}}$ has energy independent of $\alpha$; for $1\leq s<2$ and $\omega|_{\theta=\pi/2}=0$, $\rho_{\textnormal{2D}}$ has energy independent of $\alpha$). This is crucial in the proof of the uniform-in-$\alpha$ bound for the support of minimizers, as done in  \cite[Lemma B.1]{CS22}.

The following lemma shows that for LIC potentials, an Euler-Lagrange condition for the energy minimizer is also sufficient.
\begin{lemma}\label{lem_EL}
Assume $W$ satisfies {\bf (W)}, and $W$ has the LIC property. Assume there exists a compactly supported global energy minimizer (which has to be unique up to translation). Then for any probability measure $\rho$ with $E[\rho]<\infty$, the following are equivalent:
\begin{itemize}
\item[(i)] $\rho$ is the unique energy minimizer for $W$ up to translation.
\item[(ii)] $\rho$ satisfies the condition
\begin{equation}\left\{\begin{split}
    & (W*\rho)(\bx) = 2E[\rho],\quad \rho\textnormal{ a.e.}\\
    & (W*\rho)(\bx) \le  2E[\rho],\quad \forall \bx\in \supp\rho \\
    & (W*\rho)(\bx) \ge 2E[\rho],\quad\textnormal{ a.e.}\,\bx \\
\end{split}\right.\end{equation}
\item[(iii)] $\rho$ satisfies the condition
\begin{equation}\label{EL}
    (W*\rho)(\bx) \le \essinf(W*\rho),\quad \rho \textnormal{ a.e.}
\end{equation}
\end{itemize}
\end{lemma}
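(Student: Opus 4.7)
The plan is to prove the equivalences cyclically, $(i)\Rightarrow (ii)\Rightarrow (iii)\Rightarrow (i)$, mirroring the 2D argument of \cite{CS22}. Throughout I will write $U:=W*\rho$ and use that, because $W$ satisfies $\mathbf{(W)}$ (l.s.c., locally integrable, bounded below), $U$ is well-defined and lower semicontinuous; this is the technical tool that upgrades "a.e." statements to pointwise ones.

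For $(i)\Rightarrow (ii)$, assuming $\rho$ is the global minimizer, I would use standard variational perturbations. Comparing $\rho$ with $(1-t)\rho+t\sigma$ for competing compactly supported probability measures $\sigma$ and sending $t\to 0^+$ yields $\int U\,d\sigma\ge 2E[\rho]$. By a Lebesgue differentiation / mollification argument, this produces the a.e. inequality $U(\bx)\ge 2E[\rho]$, which by l.s.c. of $U$ holds everywhere. Combined with $\int U\,d\rho = 2E[\rho]$ this forces $U=2E[\rho]$ $\rho$-a.e. The upper bound on $\supp\rho$ is obtained from l.s.c.\ applied along sequences $\bx_n\to\bx_0\in\supp\rho$ with $U(\bx_n)=2E[\rho]$, which exist because $\{U=2E[\rho]\}$ has full $\rho$-measure and $\supp\rho$ has no isolated points of positive $\rho$-measure neighborhoods disjoint from this set.

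The implication $(ii)\Rightarrow (iii)$ is the shortest step: from $(ii)$, $U\ge 2E[\rho]$ everywhere (by l.s.c.\ applied to the a.e.\ bound), so $\essinf U\ge 2E[\rho]$, while $U=2E[\rho]$ $\rho$-a.e. gives $\essinf U\le 2E[\rho]$. Hence $\essinf U = 2E[\rho]$, and the $\rho$-a.e.\ identity then reads $U\le \essinf U$ $\rho$-a.e.

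The main content is $(iii)\Rightarrow (i)$, which crucially uses the LIC property. Given $(iii)$, the combination with the l.s.c.\ lower bound $U\ge \essinf U$ everywhere yields $U=\essinf U=:c$ $\rho$-a.e., so $2E[\rho]=c$. Let $\rho_*$ denote the assumed compactly supported global minimizer; after translation I may assume $\rho$ and $\rho_*$ share the same center of mass. Expand
\begin{equation*}
 E[(1-t)\rho+t\rho_*] = (1-t)^2 E[\rho] + 2t(1-t)\cdot \tfrac{1}{2}\!\int U\,d\rho_* + t^2 E[\rho_*],
\end{equation*}
and note that its derivative at $t=0$ equals $\int U\,d\rho_* - 2E[\rho]\ge c-c=0$, since $U\ge c$ everywhere. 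If $\rho\neq \rho_*$, strict LIC along $[0,1]$ gives $E[\rho_*] > E[\rho] + \partial_t E|_{t=0}\ge E[\rho]$, contradicting minimality of $\rho_*$. Hence $\rho=\rho_*$, proving $(i)$.

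The only delicate step I anticipate is $(i)\Rightarrow (ii)$, where one has to convert an integrated first-order optimality condition into pointwise statements valid both everywhere (using l.s.c.) and $\rho$-a.e.\ (for possibly singular $\rho$); the l.s.c.\ of $U$ from $\mathbf{(W)}$ is what allows the upgrade. Existence of the global minimizer, needed to run the LIC comparison in $(iii)\Rightarrow (i)$, is provided by Lemma~\ref{lem_exist}; uniqueness up to translation (used implicitly throughout) is a general consequence of strict LIC.
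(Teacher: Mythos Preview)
Your cyclic scheme matches the paper's, and your treatments of (i)$\Rightarrow$(ii) and (ii)$\Rightarrow$(iii) are essentially correct (the paper simply cites \cite[Theorem~4]{BCLR2} for the former and calls the latter clear). The genuine gap is in (iii)$\Rightarrow$(i), which you treat as routine but which the paper singles out as needing a mollification argument in the style of \cite[Lemma~2.2]{carrilloshu21}.

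Your step ``the l.s.c.\ lower bound $U\ge\essinf U$ everywhere'' is false. Lower semicontinuity of $U$ means $\{U>a\}$ is open (equivalently $U(x_0)\le\liminf_{x\to x_0}U(x)$), so a l.s.c.\ function can dip \emph{below} its essential infimum on a Lebesgue-null set; take $U\equiv 1$ on $\mathbb{R}$ with $U(0)=0$. The upgrade you want would require \emph{upper} semicontinuity, which we do not have. Without it, the key inequality $\int U\,d\rho_*\ge c$ in your convexity comparison is unjustified, because $\rho_*$ may be singular with respect to Lebesgue measure and could in principle charge the null set $\{U<c\}$. The fix is to mollify: replacing $\rho_*$ by an absolutely continuous approximation gives $\int U\,d\rho_*^\varepsilon\ge c$ directly from $U\ge c$ a.e., and one then passes to the limit after running the LIC comparison. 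The same l.s.c.\ misuse appears in your (i)$\Rightarrow$(ii) and (ii)$\Rightarrow$(iii) but is harmless there, since only the a.e.\ inequalities are actually needed for those implications. A related wrinkle you pass over: LIC is defined in this paper only for \emph{compactly supported} measures with a common center of mass, while $\rho$ is assumed merely to satisfy $E[\rho]<\infty$; this too calls for an approximation step.
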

\begin{proof}
Here, (i)$\Rightarrow$(ii) is \cite[Theorem 4]{BCLR2} which is true for any $W$ satisfying {\bf (W)}; (ii)$\Rightarrow$(iii) is clear for any $W$ satisfying {\bf (W)}; (iii)$\Rightarrow$(i) can be proved by using the LIC condition with a mollification argument, similar to the proof of \cite[Lemma 2.2]{carrilloshu21}.
\end{proof}

A direct application of the results in \cite{carrilloshu21} as fully detailed in \cite[Section 2]{CS22} leads to the following consequences.

\begin{proposition}\label{prop_concave}
Let $W$ be given by \eqref{W} with $0<s<3$ and $\Omega$ satisfying {\bf (H)}. Assume
$\tilde{\Omega}$ is negative somewhere. Then $W$ is infinitesimally concave, i.e., for any $\epsilon>0$, there exists a function $\mu\in L^\infty(\mathbb{R}^3)$ such that $\int \mu = 0$, $\supp\mu \subset \cB(0;\epsilon)$ and $E[\mu]<0$. As a result, any superlevet set of any Wasserstein-$\infty$ local minimizer does not have interior points.
\end{proposition}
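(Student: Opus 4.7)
The approach is Parseval: express $E[\mu]$ in Fourier variables and exploit the sign change of $\tilde\Omega$ given by Lemma \ref{lem_FT}. For any compactly supported $\mu\in L^\infty(\mathbb{R}^3)$ with $\int\mu=0$, the purely quadratic contribution satisfies
\[
\tfrac{1}{2}\int\!\!\int|\bx-\by|^2\mu(\bx)\mu(\by)\rd\bx\rd\by = -\Bigl|{\textstyle\int}\bx\,\mu(\bx)\rd\bx\Bigr|^{2}\le 0,
\]
so it suffices to make the Riesz piece $E_1[\mu]=\frac{1}{2}\int\!\!\int|\bx-\by|^{-s}\Omega(\overline{\bx-\by})\mu(\bx)\mu(\by)\rd\bx\rd\by$ strictly negative. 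By Plancherel and Lemma \ref{lem_FT},
\[
E_1[\mu]=\tfrac{1}{2}\int|\xi|^{-3+s}\tilde\Omega(\bxi;s)\,|\hat\mu(\xi)|^{2}\rd\xi,
\]
which is well-defined for smooth compactly supported $\mu$ with $\hat\mu(0)=0$: the latter forces $|\hat\mu(\xi)|^{2}=O(|\xi|^{2})$ near the origin, taming the $|\xi|^{-3+s}$ singularity since $s>0$, while Schwartz decay of $\hat\mu$ handles the tail.

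Fix $\bxi_0\in S^2$ with $\tilde\Omega(\bxi_0;s)<0$. By continuity, there is a spherical cap $U\ni\bxi_0$ on which $\tilde\Omega(\cdot\,;s)\le -c<0$; since $\Omega$ is even, so is $\tilde\Omega$, hence the antipodal cap $-U$ also lies in the negative region. Pick a real, even, smooth bump $\psi$ supported in $\cB(0;1)$ with $\int\psi>0$, and define
\[
\phi_R(\bx):=\psi(\bx)\cos(2\pi R\,\bxi_0\cdot\bx)-c_R\,\psi(\bx),\qquad c_R:=\Bigl({\textstyle\int\psi}\Bigr)^{-1}\!\int\psi(\bx)\cos(2\pi R\,\bxi_0\cdot\bx)\rd\bx,
\]
where $c_R\to 0$ as $R\to\infty$ by Riemann--Lebesgue. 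Then $\phi_R$ is even, supported in $\cB(0;1)$, with $\int\phi_R=0$, and
\[
\hat\phi_R(\xi)=\tfrac{1}{2}\bigl(\hat\psi(\xi-R\bxi_0)+\hat\psi(\xi+R\bxi_0)\bigr)-c_R\hat\psi(\xi)
\]
has the bulk of its $L^{2}$-mass concentrated near $\pm R\bxi_0$ for large $R$.

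Substituting into the Fourier formula, the contribution from the two bumps (which sit in the cone $\{\bxi\in U\}\cup\{-\bxi\in U\}$ at radius $\sim R$, where $\tilde\Omega\le -c$) is bounded above by $-c'R^{-3+s}\|\hat\psi\|_{L^2}^{2}$, while the cross/self terms involving $c_R\hat\psi$ are of strictly lower order. Hence $E_1[\phi_R]<0$ for all sufficiently large $R$, and so $E[\phi_R]=E_1[\phi_R]+(\text{nonpositive})<0$. To fit the support inside $\cB(0;\epsilon)$, set $\mu(\bx):=\phi_R(\bx/\epsilon)$; a direct change of variables gives $E_1[\mu]=\epsilon^{6-s}E_1[\phi_R]$ (with the quadratic part still zero by symmetry), so $E[\mu]<0$, $\int\mu=0$, and $\supp\mu\subset\cB(0;\epsilon)$. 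This establishes infinitesimal concavity, and the consequence on superlevel sets of any Wasserstein-$\infty$ local minimizer then follows directly from the general framework of \cite{carrilloshu21}, as carried out in \cite[Section~2]{CS22}.

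The main technical obstacle is the rigorous justification of the Fourier identity for $E_1$: one must pair the tempered distribution $|\xi|^{-3+s}\tilde\Omega(\bxi;s)$ (non-integrable at infinity) against $|\hat\mu|^{2}$. This is handled by approximating $\mu$ by Schwartz functions and exploiting both the second-order vanishing of $|\hat\mu|^{2}$ at the origin (from $\int\mu=0$ and smoothness) and its rapid decay at infinity, exactly as in \cite[Section~2]{CS22}; the rest of the argument is a careful bookkeeping of the concentration and scaling estimates sketched above.
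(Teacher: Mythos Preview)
Your argument is correct and is essentially the approach the paper relies upon: the paper does not give an independent proof but simply invokes the Fourier/Parseval framework of \cite{carrilloshu21} as detailed in \cite[Section~2]{CS22}, and your construction---modulating a bump by $\cos(2\pi R\,\bxi_0\cdot\bx)$ to concentrate $|\hat\mu|^2$ in the cone where $\tilde\Omega<0$, then rescaling---is exactly that argument made explicit. The only difference is that you have written out the details the paper leaves to citation.
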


\begin{theorem}
\label{thm_LICequiv}
Let $W$ be given by \eqref{W} with $0<s<3$ and $\Omega$ satisfying {\bf (H)}. Then $W$ has the LIC property if and only if $\tilde{\Omega}$ given as in \eqref{lem_FT_1} is nonnegative.
\end{theorem}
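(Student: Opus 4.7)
The plan is to recast LIC as a positivity statement for $E[\mu]$ on signed measures with vanishing zeroth and first moments, and then reduce it to a sign condition on $\tilde\Omega$ via the Fourier representation of Lemma \ref{lem_FT}. Since $E$ is a symmetric quadratic form in $\rho$, the map $t\mapsto E[(1-t)\rho_0+t\rho_1]$ is a quadratic polynomial in $t$ whose leading coefficient is $E[\rho_1-\rho_0]$; consequently LIC holds if and only if $E[\mu]>0$ for every nonzero compactly supported signed measure $\mu$ of finite energy satisfying $\int\mu=0$ and $\int\bx\,\mu=0$, the last two conditions reflecting that $\rho_0,\rho_1$ are probability measures with a common center of mass. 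Any such $\mu$ conversely arises, up to positive rescaling, as a difference $\rho_1-\rho_0$ obtained from its Jordan decomposition.

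Second, I would split $W=W_1+W_2$ with $W_1(\bx)=|\bx|^{-s}\Omega(\bbx)$ and $W_2(\bx)=|\bx|^2$. Expanding $|\bx-\by|^2=|\bx|^2-2\bx\cdot\by+|\by|^2$ together with the moment conditions yields $E_{W_2}[\mu]=-|\int\bx\,\mu|^2=0$, so the quadratic tail of $W$ drops out entirely on this class of test measures. For the singular part, the convolution theorem and Parseval combined with Lemma \ref{lem_FT} give
\begin{equation}\label{planEmu}
E[\mu] \;=\; E_{W_1}[\mu] \;=\; \tfrac{1}{2}\int_{\mathbb{R}^3}|\xi|^{-3+s}\,\tilde\Omega(\bxi)\,|\hat\mu(\xi)|^2\,\rd{\xi},
\end{equation}
turning LIC into a positivity statement for this weighted $L^2$ form.

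For the sufficient direction, if $\tilde\Omega\ge 0$ then \eqref{planEmu} is immediately nonnegative. Strict positivity for $\mu\neq 0$ uses that $\tilde\Omega$ is continuous on $S^2$ by Lemma \ref{lem_FT} and not identically zero (otherwise $W_1$ would be a tempered distribution supported at the origin, contradicting its $L^1_{\mathrm{loc}}$ form), so $\tilde\Omega>0$ on some nonempty open set $U\subset S^2$. Compact support of $\mu$ makes $\hat\mu$ extend to an entire function by Paley--Wiener; if $E[\mu]=0$ the integrand in \eqref{planEmu} must vanish a.e.\ on the open cone over $U$, and analyticity forces $\hat\mu\equiv 0$, hence $\mu=0$.

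For the necessary direction, assume $\tilde\Omega(\bxi_0)<0$ for some $\bxi_0$. Proposition \ref{prop_concave} then supplies, for every $\epsilon>0$, a compactly supported $\mu_0\in L^\infty(\mathbb{R}^3)$ with $\int\mu_0=0$, $\supp\mu_0\subset \cB(0;\epsilon)$ and $E[\mu_0]<0$; what remains is to enforce the vanishing first moment. I would achieve this by the translation trick $\mu_1:=\mu_0-\tau_h\mu_0$, where $\tau_h$ denotes translation by $h\in\mathbb{R}^3$: one has $\int\mu_1=0$ and $\int\bx\,\mu_1=0$ automatically, and a direct expansion gives $E[\mu_1]=2E[\mu_0]-C(h)$ with a symmetric cross term $C(h)$ against the translate. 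The $W_1$ contribution to $C(h)$ decays to $0$ as $|h|\to\infty$ from the decay of $|\bx|^{-s}\Omega$, while the $W_2$ contribution equals $2E_{W_2}[\mu_0]$ independently of $h$ by direct computation using $\int\mu_0=0$; hence $E[\mu_1]\to 2E_{W_1}[\mu_0]$ as $|h|\to\infty$. The main obstacle here is ensuring $E_{W_1}[\mu_0]<0$, not merely $E[\mu_0]<0$: this is arranged via the freedom in $\epsilon$, since $|E_{W_2}[\mu_0]|=|\int\bx\,\mu_0|^2\lesssim \epsilon^{2}\|\mu_0\|_{L^1}^2$ is negligible compared to the Riesz-type size $|E_{W_1}[\mu_0]|\sim \epsilon^{-s}\|\mu_0\|_{L^1}^2$ once $\epsilon$ is taken small, so $E_{W_1}[\mu_0]$ inherits the negative sign of $E[\mu_0]$. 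Choosing $|h|$ large then yields $E[\mu_1]<0$, contradicting LIC and completing the equivalence.
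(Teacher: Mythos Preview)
The paper does not give a self-contained proof of Theorem \ref{thm_LICequiv}; it cites \cite[Section 2]{CS22} and \cite{carrilloshu21}. Your overall strategy---reformulate LIC as $E[\mu]>0$ on mean-zero, first-moment-zero compactly supported signed measures, drop the $|\bx|^2$ part via $E_{W_2}[\mu]=-|\int\bx\,\mu|^2=0$, and reduce to the sign of $\tilde\Omega$ through the Plancherel identity \eqref{planEmu}---is precisely the standard one used there. The sufficient direction is correct; the Paley--Wiener argument for strict positivity is clean (the only cosmetic slip is that it is $\hat W_1$, not $W_1$, that would be supported at the origin if $\tilde\Omega\equiv 0$).

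The necessary direction has a genuine gap. Proposition \ref{prop_concave}, used as a black box, yields only $E[\mu_0]<0$; it gives neither $E_{W_1}[\mu_0]<0$ nor any lower bound on $|E_{W_1}[\mu_0]|$, and your claimed scaling $|E_{W_1}[\mu_0]|\sim\epsilon^{-s}\|\mu_0\|_{L^1}^2$ is not a consequence of that proposition. Nothing in its statement rules out $E_{W_1}[\mu_0]\ge 0$ with $E_{W_2}[\mu_0]=-|\int\bx\,\mu_0|^2$ sufficiently negative to make $E[\mu_0]<0$; in that scenario your translation limit $E[\mu_1]\to 2E_{W_1}[\mu_0]\ge 0$ produces no contradiction. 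Dilating $\mu_0$ does not help either, since rescaling preserves the sign of $E_{W_1}$. The cure is to open the black box: the construction behind Proposition \ref{prop_concave} (as in \cite{carrilloshu21}) takes, for instance, a modulated bump $\mu_0(\bx)=\phi(\bx)\cos(2\pi R\,\bxi_0\cdot\bx)$ with $\phi\in C_c^\infty$ real and $R$ large, so that $\hat\mu_0$ is concentrated near $\pm R\bxi_0$ where $\tilde\Omega<0$, giving $E_{W_1}[\mu_0]<0$ directly. Once that is in hand, your antisymmetrized translate $\mu_1=\mu_0-\tau_h\mu_0$ does kill the first moment and yields $E[\mu_1]<0$ for $|h|$ large, as you intended.
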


For the class of potentials $W_\alpha$ in \eqref{Walpha} with $\omega$ satisfying {\bf (h)}, the corresponding angle function for the Fourier transform of $|\bx|^{-s}(1+\alpha\omega(\bbx))$ is $\tilde{\Omega}_\alpha = c_s + \alpha \tilde{\omega}$, where $c_s$ is given in \eqref{calc2}. According to the sign of $\tilde{\omega}$, the behavior of its energy minimizers can be categorized as:
\begin{itemize}
    \item If $\tilde{\omega}$ is nonnegative, then $W_\alpha$ has the LIC property for any $\alpha\ge 0$. $\tilde{\omega}$ is necessarily nonnegative if $2\le s < 3$.
    \item If $\tilde{\omega}$ is sign-changing, then $W_\alpha$ has the LIC property if $0\le \alpha \le \alpha_L$ where $\alpha_L$ is defined in \eqref{alphaL}. If $\alpha>\alpha_L$, then $W_\alpha$ does not have the LIC property, and thus infinitesimal concave by Proposition \ref{prop_concave}. $\tilde{\omega}$ is necessarily sign-changing if $0<s<1$.
\end{itemize}
We will see in Theorem \ref{thm_s12exist} that both cases can happen if $1\le s < 2$. Notice that this is the most novel case compared to the two dimensional results in \cite{CS22}.

\section{Ellipsoid-shaped minimizers for LIC potentials}\label{sec_ell}

For simplicity, we will focus on the potentials $W$ in \eqref{W} with the extra assumption
\begin{equation}
    \text{{\bf (Hx)}: $\Omega$ satisfies {\bf (H)} and axisymmetric with respect to the $x_3$-axis.}
\end{equation}
In other words, $\Omega$ is a function of $\theta\in [0,\pi]$ with $\Omega(\theta)=\Omega(\pi-\theta)$ (abusing notation, denoting $\Omega(\bbx)=\Omega(\theta)$). Its Fourier transform is also axisymmetric with respect to the $\xi_3$-axis, and we may write $\tilde{\Omega}(\bxi)=\tilde{\Omega}(\varphi)$. For $\omega$, we also introduce a similar assumption
\begin{equation}
    \text{{\bf (hx)}: $\omega$ satisfies {\bf (h)} and axisymmetric with respect to the $x_3$-axis.}
\end{equation}

For $a,b>0$, denote 
\begin{equation}\label{rhoab}
    \rho_{a,b}(\bx) = \frac{1}{a^2 b}\rho_3\Big(\frac{x_1}{a},\frac{x_2}{a},\frac{x_3}{b}\Big)
\end{equation}
as an axisymmetric rescaling of $\rho_3$ (defined in \eqref{rhod}).  $\rho_{0,b}$ is understood as the weak limit of $\rho_{a,b}$ as $a\rightarrow 0^+$, which is supported on the $x_3$-axis. Similarly $\rho_{a,0}$ is supported on the $x_1x_2$-plane. The support of $\rho_{a,b}$ with $(a,b)\in [0,\infty)^2\backslash \{(0,0)\}$ is a possibly degenerate ellipsoid with axis lengths $a R_3, a R_3, b R_3$ in the $x_1,x_2,x_3$ directions respectively.

\begin{theorem}\label{thm_ell}
Assume $0<s<3$, $W$ is given by \eqref{W} with $\Omega$ satisfying {\bf (Hx)} and $\tilde{\Omega}\ge c > 0$. Then there exists a unique pair $(a,b)\in(0,\infty)^2$ such that a nondegenerate ellipsoid $\rho_{a,b}$ is the unique energy minimizer for $W$ (up to translation). Here the assumption $\tilde{\Omega}\ge c > 0$ is automatically satisfied if $2\le s < 3$.

If $\tilde{\Omega}\ge 0$, then there exists a unique pair $(a,b)\in[0,\infty)^2\backslash\{(0,0)\}$ such that a possibly degenerate ellipsoid $\rho_{a,b}$ is the unique energy minimizer for $W$ (up to translation).
\end{theorem}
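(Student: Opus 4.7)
The plan is to apply Lemma \ref{lem_EL}. Since $\tilde{\Omega}\geq 0$, Theorem \ref{thm_LICequiv} ensures LIC, so the compactly supported global minimizer (existing by Lemma \ref{lem_exist}) is unique up to translation. By Lemma \ref{lem_EL} it suffices to exhibit, for a suitable $(a,b)$, some $\rho_{a,b}$ satisfying the Euler--Lagrange condition \eqref{EL}; uniqueness of $(a,b)$ then follows a posteriori from the uniqueness of the minimizer.

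I would first treat the range $0<s<1$. The decomposition \eqref{decomp} expresses $|\bx|^{-s}\Omega(\bbx)$ as a nonnegative combination of the 1D Riesz kernels $|\bx\cdot\bxi|^{-s}$ with weight $\tilde{\Omega}(\bxi)$. Since $\rho_{a,b}$ is the push-forward of $\rho_3$ by the diagonal map $\textnormal{diag}(a,a,b)$, its 1D marginal along any direction $\bxi$ is, up to an axisymmetric affine rescaling, a 1D marginal of $\rho_3$, and the associated 1D Riesz potential is quadratic on its support (this is a one-dimensional slice of the obstacle problem solved by $\rho_3$). Integrating in $\bxi$ against the axisymmetric weight $\tilde{\Omega}(\bxi)$ and combining with the contribution of $|\bx|^2$ produces, on $\supp\rho_{a,b}$, a polynomial of the form $A(a,b)(x_1^2+x_2^2)+B(a,b)\,x_3^2+C(a,b)$ (this is Lemma \ref{lem_ab}). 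The on-support part of \eqref{EL} then reduces to the algebraic system $A=B=1$. Under $\tilde{\Omega}\geq c>0$, I would establish existence of a solution $(a,b)\in(0,\infty)^2$ by a continuity/degree argument: the coefficients $A,B$ blow up as either $a$ or $b$ tends to $0^+$ (the singular kernel dominates on a thin ellipsoid) and decay as $a,b\to\infty$ (the $|\bx|^2$ confinement dominates on a large ellipsoid), and they depend continuously on $(a,b)$.

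For $1\leq s<3$ the decomposition \eqref{decomp} is unavailable, so I would invoke analytic continuation. By Lemma \ref{lem_FT}, $\tilde{\Omega}(\bxi;s)$ is holomorphic in $s\in\mathfrak{S}$, and a Fourier-side representation of $(|\bx|^{-s}\Omega * \rho_{a,b})(\bx)$ shows that the coefficients $A(a,b;s),B(a,b;s),C(a,b;s)$ inherit this holomorphy for fixed $(a,b)$. The identity that $(W*\rho_{a,b})(\bx)$ equals the above quadratic on the ellipsoid is then a holomorphic equality in $s$ proved for $0<\Re s<1$, and so extends to all of $\mathfrak{S}$; existence of $(a,b)$ carries over. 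The off-support inequality in \eqref{EL} follows from lower semicontinuity of $|\bx|^{-s}\Omega$, the $|\bx|^2$ growth at infinity, and a maximum-principle/axisymmetric analysis of the Riesz potential of $\rho_{a,b}$ outside its support. Lemma \ref{lem_EL}(iii) then identifies $\rho_{a,b}$ as the unique minimizer. For the second statement with $\tilde{\Omega}\geq 0$ possibly vanishing, I would perturb to $\tilde{\Omega}+\varepsilon>0$, solve the system, and pass to the limit $\varepsilon\to 0^+$; the limit may degenerate to $\rho_{0,b}$ or $\rho_{a,0}$.

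The main obstacle is the two-parameter existence argument for $(a,b)$: unlike in 2D where axisymmetry reduces the problem to a single parameter, here two genuinely coupled equations $A=B=1$ must be solved simultaneously, and the boundary behavior as $(a,b)$ approaches the boundary of $(0,\infty)^2$ must be controlled carefully to deploy a topological-degree or continuation argument. A secondary subtlety is verifying that the on-support identity, once proved for $0<s<1$ by the 1D decomposition, extends holomorphically in $s$ uniformly in $\bx$ ranging over an $s$-independent ellipsoid; this requires setting up the Fourier representation of the convolution $(|\bx|^{-s}\Omega * \rho_{a,b})$ so that its holomorphic dependence on $s$ is manifest simultaneously with continuity in $\bx$.
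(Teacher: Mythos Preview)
Your overall architecture matches the paper: LIC via Theorem \ref{thm_LICequiv}, quadratic generated potential via the 1D decomposition \eqref{decomp} for $0<s<1$, analytic continuation for $1\le s<3$, and identification of the minimizer through Lemma \ref{lem_EL}. Two points deserve correction or sharpening.

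First, the existence of $(a,b)$ is not a genuinely two-parameter problem. You overlooked that $A(a,b)$ and $B(a,b)$ in \eqref{lem_ab_2} are both homogeneous of degree $-(2+s)$ in $(a,b)$. Hence it suffices to find $t=b/a\in(0,\infty)$ with $A(1,t)/B(1,t)=1$ and then fix the scale; this is precisely the one-variable intermediate value argument the paper runs in Lemma \ref{lem_ab2}. Your ``degree/continuation'' plan and the stated obstacle about coupled equations are unnecessary.

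Second, your treatment of the off-support inequality is the real gap. For $0<s<1$ it indeed drops out of the 1D decomposition, since each slice potential achieves its minimum on the projected support and one integrates against the nonnegative weight $\tilde{\Omega}$. But for $1\le s<3$ the decomposition \eqref{decomp} is unavailable, and neither lower semicontinuity nor a maximum principle supplies the inequality: the Riesz potential of $\rho_{a,b}$ is not harmonic, and there is no obvious comparison function outside the ellipsoid. The paper handles this by analytically continuing not just the on-support identity but also the explicit representation
\[
h(\bx;s)=\tau_s R_1^{2+s}\int_{S^2} r_{\bxi}^{-s}\,g\!\left(\frac{\bxi\cdot\bx}{r_{\bxi}};s\right)\tilde{\Omega}(\bxi)\rd\bxi,
\]
where $g$ is the normalized 1D generated potential (Lemma \ref{lem_g}). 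One checks directly from its integral formula that $g(x;s)>0$ for $|x|>1$ and $1\le s<3$, and that $\tau_s R_1^{2+s}>0$; nonnegativity of $\tilde{\Omega}$ then yields $h\ge 0$. Your proposal should replace the vague maximum-principle step by this continuation of the 1D formula. Similarly, for the degenerate case $\tilde{\Omega}\ge 0$, the paper does not perturb $\tilde{\Omega}$; it shows that when the ratio $A(1,t)/B(1,t)$ fails to cross $1$ one lands in item 2 or 3 of Lemma \ref{lem_ab2}, where $A\le 1$ (resp.\ $B\le 1$) directly gives the inequality part of \eqref{EL} after adding the missing $|\bx|^2$ contribution. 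Your perturbation $\tilde{\Omega}+\varepsilon$ would require controlling the limit of $(a_\varepsilon,b_\varepsilon)$ and verifying the EL condition survives, which is more delicate than the direct argument.
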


To prove the theorem, the key is to show that the potential generated by any $\rho_{a,b}$ is quadratic in its support.
\begin{lemma}\label{lem_ab}
Assume $0<s<3$ and $\Omega$ satisfies {\bf (Hx)} and $a,b>0$. Then
\begin{equation}\label{lem_ab_1}
    (|\bx|^{-s}\Omega(\bbx)+A x_1^2 + A x_2^2 + B x_3^2)*\rho_{a,b} = C,\quad \bx\in\supp\rho_{a,b}
\end{equation}
where
\begin{equation}\label{lem_ab_2}\begin{split}
    & A(a,b) = \pi\tau_s\Big(\frac{R_1}{R_3}\Big)^{2+s}\int_0^\pi \sin^3\varphi (a^2\sin^2\varphi+b^2\cos^2\varphi)^{-(2+s)/2}\tilde{\Omega}(\varphi)\rd{\varphi} \\
    & B(a,b) = 2\pi\tau_s\Big(\frac{R_1}{R_3}\Big)^{2+s}\int_0^\pi \cos^2\varphi\sin\varphi (a^2\sin^2\varphi+b^2\cos^2\varphi)^{-(2+s)/2}\tilde{\Omega}(\varphi)\rd{\varphi} \\
\end{split}\end{equation}
Furthermore, if $\tilde{\Omega}\ge 0$, then $(|\bx|^{-s}\Omega(\bbx)+A x_1^2 + A x_2^2 + B x_3^2)*\rho_{a,b}$ achieves minimum on $\supp\rho_{a,b}$.

If $\tilde{\Omega}\ge 0$, $a=0$, $b>0$, $0<s<1$, then the same is true provided that $A$ is finite. If $\tilde{\Omega}\ge 0$, $a>0$, $b=0$, $0<s<2$, then the same is true provided that $B$ is finite. 
\end{lemma}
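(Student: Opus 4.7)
The plan is to prove \eqref{lem_ab_1} first in the range $0<s<1$ via the slice decomposition \eqref{decomp} and the one-dimensional Riesz Euler--Lagrange identity, then extend to the full range $0<s<3$ by analytic continuation in $s$. For $0<s<1$, substituting \eqref{decomp} gives
\[
(|\bx|^{-s}\Omega(\bbx))*\rho_{a,b}(\bx) = \tau_s\int_{S^2}\tilde{\Omega}(\bxi)\int_{\mathbb{R}^3}|(\bx-\by)\cdot\bxi|^{-s}\rho_{a,b}(\by)\rd\by\rd\bxi .
\]
For each fixed $\bxi\in S^2$, the change of variables $\by=(az_1,az_2,bz_3)$ turns the inner integral into $\int|\bx\cdot\bxi-\bz\cdot\eta|^{-s}\rho_3(\bz)\rd\bz$ with $\eta:=(a\xi_1,a\xi_2,b\xi_3)$. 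By radial symmetry of $\rho_3$ I may project onto the direction $\eta/|\eta|$, reducing this to a one-dimensional convolution with $\nu$, the axial projection of $\rho_3$. A direct integration from $\rho_3\propto(R_3^2-|\bz|^2)_+^{(s-1)/2}$ produces $\nu(t)\propto(R_3^2-t^2)_+^{(s+1)/2}$, which has the same functional form as the one-dimensional Riesz minimizer $\rho_1\propto(R_1^2-t^2)_+^{(s+1)/2}$; hence $\nu$ is an affine pushforward of $\rho_1$ by the scaling $R_3/R_1$, and the inner integral becomes $(R_3|\eta|/R_1)^{-s}(|\cdot|^{-s}*\rho_1)(c)$ with $c:=R_1(\bx\cdot\bxi)/(R_3|\eta|)$.

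For $\bx\in\supp\rho_{a,b}$, writing $\bx=(a\tilde x_1,a\tilde x_2,b\tilde x_3)$ with $|\tilde\bx|\le R_3$ gives $\bx\cdot\bxi=\tilde\bx\cdot\eta$, and Cauchy--Schwarz forces $|c|\le R_1$. Hence the one-dimensional Euler--Lagrange identity $(|\cdot|^{-s}*\rho_1)(c)=2E_1-c^2$ applies, yielding
\[
\int_{\mathbb{R}^3}|(\bx-\by)\cdot\bxi|^{-s}\rho_{a,b}(\by)\rd\by = 2E_1(R_1/R_3)^s|\eta|^{-s} - (R_1/R_3)^{2+s}|\eta|^{-s-2}(\bx\cdot\bxi)^2 .
\]
Multiplying by $\tau_s\tilde{\Omega}(\bxi)$ and integrating over $S^2$, the axisymmetry of $\tilde{\Omega}$ and of $|\eta|^{-s-2}$ eliminates the cross terms in the expansion of $(\bx\cdot\bxi)^2$ and equates the $\xi_1^2$ and $\xi_2^2$ contributions; converting to the spherical coordinates \eqref{xi} using $\xi_1^2+\xi_2^2=\sin^2\varphi$, $\xi_3^2=\cos^2\varphi$, and $|\eta|^2=a^2\sin^2\varphi+b^2\cos^2\varphi$ reproduces precisely the formulas \eqref{lem_ab_2} for $A,B$ together with an explicit expression for the constant $C$. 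To extend to $1\le s<3$, note that for $\bx$ in the interior of $\supp\rho_{a,b}$ both sides of \eqref{lem_ab_1} are holomorphic in $s\in\kS$: the left-hand side by holomorphicity of $\cF[|\bx|^{-s}\Omega]$ in $s$ (Lemma \ref{lem_FT}) combined with the holomorphic $s$-dependence of $\rho_3$ through $(R_3^2-|\bz|^2)_+^{(s-1)/2}$, and the right-hand side because $\tau_s$, $\tilde{\Omega}(\varphi;s)$ and the defining integrals are holomorphic. Agreement on the strip $0<\Re(s)<1$ therefore extends to all of $\kS$ by the identity theorem, and continuity in $\bx$ propagates the identity up to the boundary of the support.

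For the minimum property under $\tilde{\Omega}\ge 0$, in the range $0<s<1$ the one-dimensional EL inequality $(|\cdot|^{-s}*\rho_1)(c)\ge 2E_1-c^2$ holds for all $c\in\mathbb{R}$, so integrating against the nonnegative measure $\tau_s\tilde{\Omega}(\bxi)\rd\bxi$ propagates the pointwise inequality off the support. For $1\le s<3$, Theorem \ref{thm_LICequiv} ensures that the modified interaction $V_{A,B}:=|\bx|^{-s}\Omega(\bbx)+A(x_1^2+x_2^2)+Bx_3^2$ (with $A,B$ matched to $(a,b)$ via \eqref{lem_ab_2}) is LIC, so combining the equality on support established above with existence (Lemma \ref{lem_exist}) and Lemma \ref{lem_EL} identifies $\rho_{a,b}$ as the unique energy minimizer for $V_{A,B}$, with the off-support inequality then appearing as the standard EL condition. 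The degenerate cases $a=0$ or $b=0$ are handled by passing to the weak limit $a\to 0^+$ or $b\to 0^+$ in \eqref{lem_ab_1}, the finiteness of $A$ or $B$ ensuring convergence of the integrals in \eqref{lem_ab_2} and dominated convergence (using $\tilde{\Omega}\ge 0$) controlling the left-hand side.

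The main obstacle is the off-support inequality for $1\le s<3$: analytic continuation does not transport inequalities, so one must proceed through the LIC detour via Lemma \ref{lem_EL}, which in turn requires carefully verifying that the constant $C$ produced by \eqref{lem_ab_1} coincides with the essential infimum of $V_{A,B}*\rho_{a,b}$ over $\mathbb{R}^3$, rather than merely its value on $\supp\rho_{a,b}$. A secondary technical point is the justification of holomorphic $s$-dependence on the left-hand side of \eqref{lem_ab_1}, which must account for the intrinsic $s$-dependence of $\rho_{a,b}$ through $(R_3^2-|\bz|^2)_+^{(s-1)/2}$ and requires uniform convergence of the relevant integrals on compact subsets of the interior of the support.
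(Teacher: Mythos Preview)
Your treatment of the equality \eqref{lem_ab_1} is essentially the paper's: slice decomposition via \eqref{decomp} and the one-dimensional Euler--Lagrange identity for $0<s<1$, followed by analytic continuation in $s$ to reach $1\le s<3$.

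The genuine gap is the one you yourself flag as the ``main obstacle'': the off-support inequality for $1\le s<3$. Your proposed LIC detour through Lemma~\ref{lem_EL} is circular. To invoke either condition (ii) or (iii) of that lemma you already need to know that the constant value $C$ on $\supp\rho_{a,b}$ coincides with $\essinf(V_{A,B}*\rho_{a,b})$ over all of $\mathbb{R}^3$---and that is precisely the inequality you are trying to prove. LIC alone does not promote ``constant on support'' to ``global minimum'', and nothing in the tools you cite closes the loop non-circularly.

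The paper resolves this by a different mechanism. It isolates the one-dimensional excess potential
\[
g(x;s) = \bigl[(R_1^{-s-2}|\cdot|^{-s}+|\cdot|^2)*\bar\rho_1\bigr](x) - \bigl[(R_1^{-s-2}|\cdot|^{-s}+|\cdot|^2)*\bar\rho_1\bigr](1)
\]
and proves directly (Lemma~\ref{lem_g}) that $g(x;s)>0$ for $x>1$ and $1\le s<3$, by writing $g(x;s)=\int_1^x G(t;s)\rd t$ with the explicit integrand \eqref{G}. The sign of $G$ can be read off from the integral representation: the coefficient $-sR_1^{-s-2}=-\frac{2\cos\frac{s\pi}{2}}{(s+1)\pi}\beta(\frac12,\frac{3+s}{2})$ is nonnegative exactly for $1\le s\le 3$, so positivity of $G$ (hence of $g$) follows by inspection on that range. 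The three-dimensional excess $h(\bx;s)$ is then expressed, for $0<s<1$, as an $S^2$-integral of $g$ against the nonnegative weight $\tau_s R_1^{2+s}r_{\bxi}^{-s}\tilde\Omega(\bxi)$ (formula \eqref{h1}); both sides are shown to be holomorphic in $s\in\kS$, so the \emph{identity} \eqref{h1} persists to $1\le s<3$. Nonnegativity of $h$ then follows because the integrand remains nonnegative there (using $\tau_s R_1^{2+s}>0$ from Remark~\ref{rem_ab}). The device is to analytically continue an identity whose right-hand side stays manifestly nonnegative across the full range---a step your proposal does not supply.
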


\begin{remark}
If we do not assume the axisymmetry, then we expect a result similar to Lemma \ref{lem_ab} with the quadratic form $A x_1^2 + A x_2^2 + B x_3^2$ replaced by a general quadratic form in $\bx$, and a result similar to Theorem \ref{thm_ell} with $\rho_{a,b}$ replaced by the push-forward of $\rho_3$ by a general linear transformation.  In the 2D case, such results were obtained in \cite[Section 3]{CS22}.
\end{remark}

\begin{remark}\label{rem_ab}
As observed in \cite{CS22}, although $R_1$ is only defined for $0<s<1$, 
$$
\tau_s R_1^{2+s}=(2\pi)^{-s}\Gamma(s)\Big(\frac{2}{s(s+1)\pi} \beta\Big(\frac{1}{2},\frac{3+s}{2}\Big)\Big)^{-1}
$$
is defined for any complex  number $s$ with $\Re(s)>0$, holomorphic in $s$, and positive for real number $s>0$. This quantity appeared in \eqref{lem_ab_2} is understood in this way.
\end{remark}

To prove this lemma, we first treat the case $0<s<1$, in which we have the decomposition \eqref{decomp} and we may apply the strategy of 1D projections similar to \cite{CS22}. Then we extend it to the full range of $s$ by analytic continuation.

\subsection{The case $0<s<1$}
\label{subsec0s1}

We first give a lemma on the projection of $\rho_{a,b}$ onto one-dimensional subspaces. 

\begin{lemma}\label{lem_lin}
Assume $0<s<1$. Let $T$ be a linear transformation on $\mathbb{R}^3$ with 1D image, spanned by a unit vector $\bbx_1$. Let $\bbx_1,\bbx_2,\bbx_3$ be an orthonormal basis of $\mathbb{R}^3$. Then
\begin{equation}
    (T_\#\rho_{a,b})(y_1\bbx_1+y_2\bbx_2+y_3\bbx_3) = \lambda \rho_1(\lambda y_1)\delta(y_2)\delta(y_3)
\end{equation}
where 
\begin{equation}
    \lambda = \frac{R_1}{\max_{\bx\in\supp\rho_{a,b}}\bx\cdot\bbx_1}
\end{equation}
\end{lemma}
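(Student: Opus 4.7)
The plan is to recognize $T_\#\rho_{a,b}$ as the one-dimensional marginal of $\rho_{a,b}$ along $\bbx_1$ (concentrated on the $\bbx_1$-axis, in accordance with the Diracs in the statement), reduce to the isotropic minimizer $\rho_3$ by an affine rescaling, and conclude via the projection identity for Riesz equilibrium measures. Since the image of $T$ is spanned by $\bbx_1$ and the stated $\lambda$ depends only on $\bbx_1$ and $\supp\rho_{a,b}$, the formula forces $T$ to be the orthogonal projection $\bx\mapsto(\bx\cdot\bbx_1)\bbx_1$, so that $T_\#\rho_{a,b}$ is determined by the law of the scalar $\bx\cdot\bbx_1$ under $\rho_{a,b}$, placed on the $\bbx_1$-axis.

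First I would apply the substitution $\bu=(x_1/a,x_2/a,x_3/b)$, which transports $\rho_{a,b}(\bx)\,d\bx$ to $\rho_3(\bu)\,d\bu$ and rewrites $\bx\cdot\bbx_1=\bu\cdot\bbx_1'$ with $\bbx_1'=(a(\bbx_1)_1,\,a(\bbx_1)_2,\,b(\bbx_1)_3)$. By the rotational invariance of $\rho_3$, the law of $\bu\cdot\bbx_1'$ coincides with that of $|\bbx_1'|u_1$, i.e.\ the $u_1$-marginal of $\rho_3$ dilated by a factor $|\bbx_1'|$. The key step, which is the main obstacle, is then the following projection identity: the marginal of $\rho_3$ onto any coordinate axis equals $\frac{R_1}{R_3}\rho_1\!\left(\frac{R_1}{R_3}\,\cdot\right)$. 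Starting from the explicit density $\rho_d(\bx)\propto(R_d^2-|\bx|^2)^{(s-d+2)/2}_+$ of the isotropic Riesz minimizer with quadratic confinement (as recalled in Appendix \ref{app:constants}), one integrates out the two transverse variables in polar coordinates; the resulting Beta integral produces an exponent in $(R_3^2-u_1^2)_+$ equal to $(s-3+2)/2+(3-1)/2=(s+1)/2$, matching the exponent of $\rho_1$ in 1D, and the multiplicative constant is then pinned down by requiring that both sides be probability measures, yielding the prefactor $R_1/R_3$.

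Combining these two steps via the scaling rule for pushforwards, the density of $|\bbx_1'|u_1$ at height $t$ equals $\frac{R_1}{R_3|\bbx_1'|}\rho_1\!\left(\frac{R_1}{R_3|\bbx_1'|}\,t\right)$. Since $\rho_3$ is supported on $\cB(0;R_3)$, one has $\max_{\bx\in\supp\rho_{a,b}}\bx\cdot\bbx_1=\max_{\bu\in\supp\rho_3}\bu\cdot\bbx_1'=R_3|\bbx_1'|$, so this density agrees with $\lambda\rho_1(\lambda t)$ for $\lambda=R_1/L$, proving the lemma.
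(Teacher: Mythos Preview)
Your argument is correct. The paper itself omits the proof, deferring to the 2D analogue in \cite[Section~3]{CS22}; your route (affine change of variables reducing $\rho_{a,b}$ to the isotropic $\rho_3$, rotational invariance of $\rho_3$, and the Beta-integral computation showing that the one-dimensional marginal of $\rho_3$ is a rescaled $\rho_1$) is precisely the standard argument one expects that reference to contain, and your identification $\max_{\bx\in\supp\rho_{a,b}}\bx\cdot\bbx_1=R_3|\bbx_1'|$ closes the computation cleanly. Your remark that the stated formula for $\lambda$ forces $T$ to be the orthogonal projection onto $\mathbb{R}\bbx_1$ is a fair reading of the lemma as written and matches exactly how it is applied in the subsequent proof of Lemma~\ref{lem_ab}.
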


The proof is similar to \cite[Section 3]{CS22} and thus omitted.

\begin{proof}[Proof of Lemma \ref{lem_ab}, in the case $0<s<1$]
We first treat the case $a,b>0$. We aim to apply \eqref{decomp} to compute the generated potential. For fixed $\bxi_1\in S^2$, let $\bxi_1,\bxi_2,\bxi_3$ be an orthonormal basis. Then
\begin{equation}\label{xxi1y1}\begin{split}
    (|\bx\cdot\bxi_1|^{-s}*\rho_{a,b}) & (y_1\bxi_1+y_2\bxi_2+y_3\bxi_3) \\
    = & \int_{\mathbb{R}}|y_1-z_1|^{-s}\iint_{\mathbb{R}^2} \rho_{a,b}(z_1\bxi_1+z_2\bxi_2+z_3\bxi_3)\rd{z_2}\rd{z_3}\,\rd{z_1}.
\end{split}\end{equation}
The inner double integral $\iint_{\mathbb{R}^2} \rho_{a,b}(z_1\bxi_1+z_2\bxi_2+z_3\bxi_3)\rd{z_2}\rd{z_3}$ is the push-forward of a linear transformation onto the 1D subspace spanned by $\bxi_1$. By Lemma \ref{lem_lin}, we get
\begin{equation}
    \iint_{\mathbb{R}^2} \rho_{a,b}(z_1\bxi_1+z_2\bxi_2+z_3\bxi_3)\rd{z_2}\rd{z_3} = \frac{R_1}{r_{\bxi_1}}\rho_1\Big(\frac{R_1}{r_{\bxi_1}}z_1\Big)
\end{equation}
where
\begin{equation}
     r_{\bxi_1} := \max_{\bx\in\supp\rho_{a,b}}\bxi_1\cdot\bx  = R_3(a^2\sin^2\varphi_1+b^2\cos^2\varphi_1)^{1/2}
\end{equation}
(similar to the quantity $r_\varphi$ in \cite[Section 3]{CS22}) with $\varphi_1$ being the angle for $\bxi_1$ as in \eqref{xi}. 

The fact that $\rho_1$ minimizes the 1D interaction energy with potential $|x|^{-s}+|x|^2$ gives
\begin{equation}
    \int_{\mathbb{R}} (|y-z|^{-s}+|y-z|^2)\rho_1(z)\rd{z} = \text{constant},\quad y\in [-R_1,R_1]
\end{equation}
(and larger outside). Rescaling by $\lambda>0$, we get
\begin{equation}
    \int_{\mathbb{R}} (|y-z|^{-s}+\lambda^{2+s}|y-z|^2)\lambda\rho_1(\lambda z)\rd{z} = \text{constant},\quad y\in [-R_1/\lambda,R_1/\lambda].
\end{equation}
Applying it to \eqref{xxi1y1} with $\lambda=\frac{R_1}{r_{\bxi_1}}$, we obtain
\begin{equation}\label{xxi1}\begin{split}
    \Big(\Big(|\bx\cdot\bxi_1|^{-s}+\Big(\frac{R_1}{r_{\bxi_1}}\Big)^{2+s}|\bx\cdot\bxi_1|^2\Big)*\rho_{a,b}\Big)  (y_1\bxi_1 +y_2\bxi_2+ & y_3\bxi_3) = \text{constant},\\
    & y_1\in [-r_{\bxi_1},r_{\bxi_1}]
\end{split}\end{equation}
and it holds in $\supp\rho_{a,b}$ in particular. Integrating in $\bxi_1$ as in \eqref{decomp}, we obtain \eqref{lem_ab_1} with the quadratic parts in the potential being
\begin{equation}\begin{split}
    \tau_s\Big(\frac{R_1}{R_3}\Big)^{2+s} &  \int_{S^2}|\bx\cdot\bxi|^2 (a^2\sin^2\varphi+b^2\cos^2\varphi)^{-(2+s)/2}\tilde{\Omega}(\bxi)\rd{\bxi} \\
    = \,& \tau_s\Big(\frac{R_1}{R_3}\Big)^{2+s} \int_0^\pi \int_0^{2\pi} (x_1\sin\varphi\cos\nu + x_2\sin\varphi\sin\nu + x_3 \cos\varphi)^2\rd{\nu}\\
    & \qquad\cdot (a^2\sin^2\varphi+b^2\cos^2\varphi)^{-(2+s)/2}\tilde{\Omega}(\varphi)\sin\varphi\rd{\varphi} \\
    = \,& \pi\tau_s\Big(\frac{R_1}{R_3}\Big)^{2+s} \int_0^\pi (x_1^2\sin^2\varphi+x_2^2\sin^2\varphi+2x_3^2\cos^2\varphi) \\
    & \qquad\cdot (a^2\sin^2\varphi+b^2\cos^2\varphi)^{-(2+s)/2}\tilde{\Omega}(\varphi)\sin\varphi\rd{\varphi} \\
\end{split}\end{equation}
using the axisymmetry $\tilde{\Omega}(\bxi)= \tilde{\Omega}(\varphi)$. This gives the expressions of $A$ and $B$ as in \eqref{lem_ab_2}. We also observe that the LHS of \eqref{xxi1} is greater than the RHS constant for $y_1\notin [-r_{\bxi_1},r_{\bxi_1}]$. Therefore, if $\tilde{\Omega}\ge 0$, the LHS of \eqref{lem_ab_1} achieves minimum on $\supp\rho_{a,b}$.

If $a=0,b>0$, the the previous argument applies if $\frac{\pi}{2}\notin \supp\tilde{\Omega}$ where the last $\tilde{\Omega}$ is interpreted as a function of $\varphi$. For general $\tilde{\Omega}\ge 0$ with $A<\infty$, we necessarily have $\tilde{\Omega}(\frac{\pi}{2})=0$. Then one can approximate $\tilde{\Omega}$ by an increasing sequence of smooth nonnegative axisymmetric functions $\tilde{\Omega}_n$ with $\frac{\pi}{2}\notin \supp\tilde{\Omega}_n$. The corresponding $\Omega_n$ satisfies {\bf (Hx)} since the positivity of $\Omega_n$ is given by \eqref{lem_FT_4}. The conclusion holds for each $\tilde{\Omega}_n$, and we obtain conclusion for $\tilde{\Omega}$ by the monotone convergence theorem.

If $a>0,b=0$, one can use a similar approximation with $0\notin \supp\tilde{\Omega}_n$.

\end{proof}

\subsection{Extending the range of $s$}

Next we aim to prove Lemma \ref{lem_ab} for $1\le s <3$. Our argument is based on analytic continuation for the variable $s$. 

\begin{proof}[Proof of Lemma \ref{lem_ab} for general $s$, `equality part']
We first prove the `equality' part, i.e., \eqref{lem_ab_1} with \eqref{lem_ab_2}. We will first assume $a,b>0$, and explain why the proof also works for the exceptional case $b=0$ at the end. 

For this purpose, we first apply Lemma \ref{lem_ab} with $0<s<1$ (which is already proved in the previous subsection) and $(a,b)$ replaced by $(a/R_3,b/R_3)$. Notice that 
\begin{equation}\label{D}
    D:=\supp\rho_{a/R_3,b/R_3} = \Big\{\bx: \frac{x_1^2+x_2^2}{a^2}+\frac{x_3^2}{b^2}\le 1\Big\}
\end{equation}
is independent of $s$. Thus we obtain
\begin{equation}\label{abR3}
    (|\bx|^{-s}\Omega(\bbx;s)+A x_1^2 + A x_2^2 + B x_3^2)*\rho_{a/R_3,b/R_3} = C,\quad \bx\in D
\end{equation}
for $0<s<1$, where
\begin{equation}\label{abR3_2}\begin{split}
    & A = \pi\tau_s R_1^{2+s}\int_0^\pi \sin^3\varphi (a^2\sin^2\varphi+b^2\cos^2\varphi)^{-(2+s)/2}\tilde{\Omega}(\varphi)\rd{\varphi} \\
    & B = 2\pi\tau_s R_1^{2+s}\int_0^\pi \cos^2\varphi\sin\varphi (a^2\sin^2\varphi+b^2\cos^2\varphi)^{-(2+s)/2}\tilde{\Omega}(\varphi)\rd{\varphi} \\
\end{split}\end{equation}
Here we emphasized the dependence of $\Omega$ on $s$ with $\tilde{\Omega}$ being a fixed function on $S^2$ satisfying {\bf (H0)}. In other words, $\Omega(\bbx;s)$ is determined by $\cF[|\bx|^{-s}\Omega(\bbx;s)] = |\xi|^{-3+s}\tilde{\Omega}(\bxi)$, which is well-defined by applying Lemma \ref{lem_FT} reversely.

We will fix a choice of $(\tilde{\Omega},a,b)$ and view every quantity above as a function in $s$ (this includes $\Omega(\cdot;s),A,B,\tau_s R_1^{2+s},\rho_{a/R_3,b/R_3}$ and the constant $C$ in \eqref{abR3}). We recall from Remark \ref{rem_ab} that $\tau_s R_1^{2+s}$ is an holomorphic function in $s\in \kS$ (recalling the definition of $\kS$ in \eqref{kS}), and thus $A,B$ are well-defined and holomorphic for such $s\in \kS$. Lemma \ref{lem_FT} shows that $\Omega(\bbx;s)$ is well-defined for $s\in \kS$.

We claim that for any $\bx\in\mathbb{R}^3$, the value of the generated potential at $\bx$
\begin{equation}
    f(\bx;s) := \Big((|\bx|^{-s}\Omega(\bbx;s)+A x_1^2 + A x_2^2 + B x_3^2)*\rho_{a/R_3,b/R_3}\Big)(\bx),\quad 0<s<3
\end{equation}
can be extended to an holomorphic function in $s\in \kS$. To see this, we will treat the part with the repulsive potential $|\bx|^{-s}\Omega(\bbx;s)$ (which we denote as $f_{{\rm rep}}(s)$, suppressing the $\bx$ dependence for a moment), and the quadratic part is straightforward since $A,B$ are holomorphic. In fact, for any $s\in (0,3)$,
\begin{equation}
    \rho_{a/R_3,b/R_3}(\bx) = C_3\frac{R_3^{2+s}}{a^2 b}\Big(1-\frac{x_1^2+x_2^2}{a^2}-\frac{x_3^2}{b^2}\Big)_+^{(s-1)/2}
\end{equation}
where the prefactor $C_3\frac{R_3^{2+s}}{a^2 b} = \frac{1}{2a^2b\beta(3/2,(1+s)/2)}$ is a normalization factor, being holomorphic in $s\in \kS$. Therefore, this formula for $\rho_{a/R_3,b/R_3}(\bx)$ can be extended holomorphically to any complex number $s\in \kS$, for any fixed $\bx\in D$. We write $f_{{\rm rep}}(s)$ as
\begin{equation}
    f_{{\rm rep}}(s) = \int_D |\bx-\by|^{-s}\Omega(\overline{\bx-\by};s)C_3\frac{R_3^{2+s}}{a^2 b}\Big(1-\frac{y_1^2+y_2^2}{a^2}-\frac{y_3^2}{b^2}\Big)^{(s-1)/2}\rd{\by}
\end{equation}
Then we are allowed to take $\partial_s f_{{\rm rep}}(s)$ by differentiating the integrand. In fact, differentiating $|\bx-\by|^{-s}$ or $\Big(1-\frac{y_1^2+y_2^2}{a^2}-\frac{y_3^2}{b^2}\Big)^{(s-1)/2}$ produces an extra logarithmic singularity at $\bx$ or the boundary, which does not affect the integrability for $s\in \kS$; differentiating $C_3\frac{R_3^{2+s}}{a^2 b}$ is clearly allowed; differentiating $\Omega(\overline{\bx-\by};s)$ does not affect the integrability due to the smoothness of $\partial_s\Omega(\bbx;s)$ in $\bbx$ as shown in Lemma \ref{lem_FT}. This proves the holomorphic property of $f_{{\rm rep}}$, and thus that of $f$.

We already know from \eqref{abR3} that $f(\bx_1;s)=f(\bx_2;s)$ for any $\bx_1,\bx_2\in D$ for $0<s<1$. By the holomorphic property of $f$ in $s$, the same is true for any $s\in \kS$. In particular, it is true for $s\in [1,3)$, which gives \eqref{lem_ab_1} with \eqref{lem_ab_2} for $s\in [1,3)$.

Finally we treat the case $\tilde{\Omega}\ge 0$, $a>0$, $b=0$, $B<\infty$ for some $s=s_0\in [1,2)$. In this case the finiteness of $B$ for $s_0$ implies the same property of $B$ for any complex $s$ with $\Re(s)\in (0,s_0)$. As a result, the formula \eqref{abR3} still holds for $0<s<1$ by the previous subsection. Then we apply the same procedure to extend $f(\bx;s)$. First, $A,B$ are holomorphic for $\Re(s)\in (0,s_0)$ by the dominated convergence theorem, using the assumption that $B<\infty$ for $s=s_0$. It is easy to verify that for $b=0$, $\rho_{a/R_3,b/R_3}$ is supported on the $x_1x_2$-plane, given by
\begin{equation}
    \rho_{a/R_3,0}(\bx)=\delta(x_3)C_2\frac{R_2^{2+s}}{a^2}\Big(1-\frac{x_1^2+x_2^2}{a^2}\Big)_+^{s/2}
\end{equation}
as a rescaling of $\rho_2$. Then the generated potential from the repulsive part is
\begin{equation}\begin{split}
    f_{{\rm rep}}(s) = & \int_{y_1^2+y_2^2\le a^2} |\bx-(y_1,y_2,0)|^{-s}\Omega(\overline{\bx-(y_1,y_2,0)};s)\\
    & \cdot C_2\frac{R_2^{2+s}}{a^2}\Big(1-\frac{y_1^2+y_2^2}{a^2}\Big)^{s/2}\rd{y_1}\rd{y_2}
\end{split}\end{equation}
Then we see that $f(\bx;s)$ can be extended holomorphically to $\Re(s)\in (0,s_0)$ by the same argument as before.  

This holomorphic property implies $f(\bx_1;s)=f(\bx_2;s)$ for any $\bx_1,\bx_2\in D$ and $s\in (0,s_0)$ as before. Then the same is true for $s_0$ by taking the limit $s\rightarrow s_0^-$ using the dominated convergence theorem for the formulas for $A,B$ and $f_{{\rm rep}}$.
\end{proof}

To prove the `inequality part' of Lemma \ref{lem_ab} for general $s$, we first need a lemma on the analytic continuation of 1D generated potentials.

\begin{lemma}\label{lem_g}
For $0<s<1$, define
\begin{equation}
    g(x;s) = [(R_1^{-s-2}|x|^{-s}+|x|^2)*\bar{\rho}_1](x)-[(R_1^{-s-2}|x|^{-s}+|x|^2)*\bar{\rho}_1](1)
\end{equation}
for $x\in\mathbb{R}$, where $\bar{\rho}_1=R_1\rho_1(R_1 x)$ is a rescaling of $\rho_1$, being the unique energy minimizer for the 1D potential $R_1^{-s-2}|x|^{-s}+|x|^2$. Then for every fixed $x$, $g(x;s)$ can be extended holomorphically to $s\in \kS$. $g(\cdot;s)$ and $\partial_s g(\cdot;s)$ are bounded on $(1,T]$ for any fixed $s\in\kS$ and fixed $T>1$. $g$ also satisfies
\begin{equation}
    g(x;s) > 0,\quad \forall x>1,\,1\le s < 3
\end{equation}
\end{lemma}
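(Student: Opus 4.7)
The plan is to prove the three claims---holomorphic extension, boundedness, and positivity---in sequence, exploiting the explicit form of $\bar{\rho}_1$ together with a careful sign analysis of $R_1^{-s-2}$ inherited from Remark \ref{rem_ab}.

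First, the 1D minimizer for the problem with potential $R_1^{-s-2}|x|^{-s}+|x|^2$, rescaled to $[-1,1]$, admits the explicit form
\begin{equation*}
\bar{\rho}_1(y;s) = \frac{1}{\beta(1/2,(s+3)/2)}(1-y^2)_+^{(s+1)/2},
\end{equation*}
whose normalization is holomorphic on $\kS$. Moreover, solving Remark \ref{rem_ab} for the inverse quantity gives
\begin{equation*}
R_1^{-s-2} = \frac{2\beta(1/2,(3+s)/2)}{s(s+1)\pi}\cos(s\pi/2),
\end{equation*}
which is holomorphic on $\kS$ with a simple zero at $s=1$. Writing
\begin{equation*}
F(x;s):=\int_{-1}^{1}\bigl[R_1^{-s-2}|x-y|^{-s}+|x-y|^2\bigr]\bar{\rho}_1(y;s)\,dy,
\end{equation*}
for $x>1$ the integrand is smooth in $y\in[-1,1]$ and holomorphic in $s\in\kS$, so $F(x;s)$ is holomorphic by differentiation under the integral. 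For $x=1$, near $y=1$ the integrand behaves like $(1-y)^{(1-s)/2}$, integrable for $\Re(s)<3$; hence $F(1;s)$ is also holomorphic on $\kS$. Therefore $g(x;s)=F(x;s)-F(1;s)$ is holomorphic on $\kS$ for every $x>1$. For $|x|<1$ the minimizer property gives $g(x;s)\equiv 0$ on $(0,1)$, whose unique holomorphic extension to $\kS$ is the zero function.

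For the boundedness part, $g(\cdot;s)$ is continuous on $[1,T]$ with $g(1;s)=0$, hence bounded on $(1,T]$. The derivative $\partial_s g$ is obtained by differentiating $F(x;s)$ and $F(1;s)$ under the integral sign; this produces $\log|x-y|$ and $\log(1-y^2)$ factors which, combined with the integrable weight $(1-y^2)^{(s+1)/2}$, remain dominated uniformly in $x\in[1,T]$, yielding continuity and hence boundedness of $\partial_s g(\cdot;s)$ on $(1,T]$.

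For the positivity claim on $x>1$, $s\in[1,3)$, I would decompose $g(x;s) = R_1^{-s-2}\,\Delta F_1(x;s) + \Delta F_2(x;s)$, where $\Delta F_i(x;s) := \int_{-1}^{1}\bigl[|x-y|^{\alpha_i}-|1-y|^{\alpha_i}\bigr]\bar{\rho}_1(y;s)\,dy$ with $\alpha_1=-s$ and $\alpha_2=2$. Expanding $(x-y)^2-(1-y)^2 = (x-1)(x+1-2y)$ and using $\int\bar{\rho}_1\,dy=1$ together with the symmetry $\int y\bar{\rho}_1\,dy=0$ give the clean identity $\Delta F_2(x;s) = x^2-1 > 0$, independent of $s$. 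For the Riesz difference, since $x-y > 1-y > 0$ when $x>1$ and $y\in(-1,1)$, one has $(x-y)^{-s} < (1-y)^{-s}$ and therefore $\Delta F_1(x;s) < 0$. On $[1,3)$ the sign of $R_1^{-s-2}$ is governed by $\cos(s\pi/2)$: it vanishes at $s=1$ and is strictly negative on $(1,3)$. Hence $R_1^{-s-2}\Delta F_1(x;s)\ge 0$ throughout $[1,3)$, and adding the strictly positive $\Delta F_2(x;s)$ yields $g(x;s)>0$.

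The main subtlety is the sign-flip of $R_1^{-s-2}$ as $s$ passes through $1$: on $(0,1)$ this coefficient is positive and $R_1^{-s-2}\Delta F_1$ is negative, so positivity of $g$ there is enforced only by the variational argument (and not by the above manipulation); precisely at $s=1$ the Riesz coefficient vanishes, and on $(1,3)$ it becomes negative, so that $R_1^{-s-2}\Delta F_1$ flips to nonnegative and reinforces the quadratic term. Identifying and exploiting this sign-flip via the explicit cosine factor in $R_1^{-s-2}$ inherited from Remark \ref{rem_ab} is the crux of the proof.
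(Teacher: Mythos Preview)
Your argument is correct, and the positivity step rests on exactly the same observation as the paper: the sign of $R_1^{-s-2}$ is governed by $\cos(s\pi/2)$, which flips at $s=1$, so that on $[1,3)$ the Riesz contribution becomes nonnegative and the quadratic contribution $x^2-1$ supplies strict positivity. Where you differ from the paper is in the packaging of the holomorphic extension and boundedness. The paper writes $g(x;s)=\int_1^x G(t;s)\,dt$ with $G(t;s)=\partial_t F(t;s)$, proves $G$ is holomorphic in $s$, derives the explicit blow-up estimate $|G(t;s)|+|\partial_s G(t;s)|\le C(t-1)^{(1-\Re s)/2}(1+|\ln(t-1)|)^2$, and obtains both boundedness and positivity from the positivity of $G$. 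You instead work directly with $g(x;s)=F(x;s)-F(1;s)$, get holomorphy by differentiating under the integral, boundedness by continuity on $[1,T]$ via the uniform majorant $(1-y)^{(1-\Re s)/2}$ (with logarithmic corrections for $\partial_s$), and positivity by the clean decomposition $g=R_1^{-s-2}\Delta F_1+(x^2-1)$.

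Your route is a bit more direct for positivity (the explicit evaluation $\Delta F_2=x^2-1$ is nicer than tracking the sign of $2(t-y)$ inside the $G$-integrand), while the paper's derivative representation gives a quantitative local rate at $x=1^+$ that your continuity argument does not. Both are valid; the essential insight---the cosine-driven sign flip of $R_1^{-s-2}$ at $s=1$---is shared.
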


\begin{proof}
Notice that $\supp\bar{\rho}_1=[-1,1]$. Clearly $g(\cdot;s)$ is an even function in $x$ and vanishes on $[-1,1]$ for any $s\in (0,1)$ by the minimizing property of $\bar{\rho}_1$. Therefore it suffices to treat the case $x>1$.

For $0<s<1$, $\bar{\rho}_1(x)=C_1R_1^{2+s}(1-x^2)_+^{(1+s)/2}$ where $C_1R_1^{2+s}=\frac{1}{\beta(1/2,(3+s)/2)}$ is a normalizing factor, holomorphic in $s$ for $\Re(s)>0$ and positive for $s>0$. For $0<s<1$ and $x>1$, we rewrite $g(x;s)$ as
\begin{equation}\label{g}\begin{split}
    g(x;s) = & \int_1^x \partial_t[(R_1^{-s-2}|\cdot|^{-s}+|\cdot|^2)*\bar{\rho}_1](t)\rd{t} \\
    = & \int_1^x \partial_t\int_{-1}^1 (R_1^{-s-2}(t-y)^{-s}+(t-y)^2)\bar{\rho}_1(y)\rd{y}\rd{t} \\
    = & \int_1^x G(t;s)\rd{t} \\
\end{split}\end{equation}
where, for $t>1$
\begin{equation}\label{G}
    G(t;s):=C_1R_1^{2+s}\int_{-1}^1 (-s R_1^{-s-2}(t-y)^{-s-1}+2(t-y))(1-y^2)^{(1+s)/2}\rd{y}
\end{equation}
with 
$s R_1^{-s-2}=\frac{2\cos\frac{s\pi}{2}}{(s+1)\pi}\beta(\frac{1}{2},\frac{3+s}{2})$, that can be extended holomorphically to $s\in\kS$. Therefore, it is clear that for every fixed $t$, $G(t;s)$ can be extended holomorphically to $s\in\kS$, given by the same formula. 

Then we estimate $G(t;s)$ for $t$ close to $1$ and $s\in\kS$. It is clear that $G(t;s)$ is bounded near $t=1$ if $\Re(s)\in (0,1)$. We claim that
\begin{equation}\label{G_est}
    |G(t;s)|+|\partial_s G(t;s)| \le C(t-1)^{(-\Re(s)+1)/2}(1+|\ln(t-1)|)^2
\end{equation}
for any $T>1$, $1<t<T$ and $\Re(s)\in [1,3)$, where $C$ depends on $T$ and uniform on compact sets for $s$. To see this, we first notice that the prefactors $C_1R_1^{2+s}$ and $s R_1^{-s-2}$ in \eqref{G} have no singularity in $s$, and thus can be ignored. Also, it suffices to treat the case of real $s$. In this case, the integral $G(t;s)$ has the same type of singularity as
\begin{equation}
    \int_{-1}^1 (t-y)^{-s-1}(1-y)^{(1+s)/2}\rd{y} = \int_0^2 (\epsilon+y)^{-s-1}y^{(1+s)/2}\rd{y},
\end{equation}
where we denote $\epsilon=t-1>0$. In the nontrivial case of sufficiently small $\epsilon$, the last integral can be estimated by cutting at $y=\epsilon$: the part $0<y<\epsilon$ can be bounded by $C\int_0^\epsilon \epsilon^{-s-1}\epsilon^{(1+s)/2}\rd{y} = C\epsilon^{(-s+1)/2}$; the part $\epsilon<y<2$ can be bounded by $C\int_\epsilon^2 y^{-s-1}y^{(1+s)/2}\rd{y} \le C\epsilon^{(-s+1)/2}$ up to an extra factor $|\ln\epsilon|$ at $s=1$. The estimate of $\partial_s G$ can be done similarly because the integrand only has extra logarithmic singularities in $t-y$ and $1-y$ (with the notations in \eqref{G}).

Therefore $G(\cdot;s)$ and $\partial_s G(\cdot;s)$ are absolutely integrable in $t\in (1,T]$ for any $s\in\kS$. This allows us to define the analytic continuation of $g(x;s)$ to $s\in\kS$ by the original formula \eqref{g}, and also shows the claimed boundedness of $g$ and $\partial_s g$.

The conclusion $g(x;s)>0$ for $x>1$ and $s\in [1,3)$ follows from the fact that $G(t;s)>0$ for $t>1$ and $s\in [1,3)$, which can be seen from \eqref{G} since the integrand and the prefactor are positive.
\end{proof}

\begin{remark}
In the last paragraph of the above proof, the positivity of $G$ seems to be subtle. In fact, the coefficient $-s R_1^{-s-2}=-\frac{2\cos\frac{s\pi}{2}}{(s+1)\pi}\beta(\frac{1}{2},\frac{3+s}{2})$ is nonnegative for $1\le s \le 3$ but becomes negative for $0<s<1$ or $3<s<4$. On one hand, the fact that $\bar{\rho}_1$ being an energy minimizer for $0<s<1$ implies $g(x;s)>0$, but this cannot be obtained from the above proof; on the other hand, this proof does not provide a nice way of extending $g$ to $s\ge 3$, or indicate its positivity there in case it is extended.
\end{remark}

\begin{proof}[Proof of Lemma \ref{lem_ab} for general $s$, `inequality part']

We follow the notations in the proof of the `equality part' of Lemma \ref{lem_ab}. For $\tilde{\Omega}\ge 0$, $s\in [1,3)$ and $a,b>0$, we will prove that the generated potential $(|\bx|^{-s}\Omega(\bbx;s)+A x_1^2 + A x_2^2 + B x_3^2)*\rho_{a/R_3,b/R_3}$ achieves minimum on $D$ (here $A,B$ follow  \eqref{abR3_2}). At the end of the proof, we will also show the same conclusion for the case $a>0,b=0,s\in [1,2),B<\infty$.

We first assume $a,b>0$. For $\bx\in\mathbb{R}^3$ and $s\in\kS$, define
\begin{equation}\label{h}\begin{split}
    h(\bx;s) = & [(|\bx|^{-s}\Omega(\bbx;s)+A x_1^2 + A x_2^2 + B x_3^2)*\rho_{a/R_3,b/R_3}](\bx)\\& -[(|\bx|^{-s}\Omega(\bbx;s)+A x_1^2 + A x_2^2 + B x_3^2)*\rho_{a/R_3,b/R_3}](D)
\end{split}\end{equation}
(the last quantity denoting the constant value of the generated potential on $D$). For every fixed $\bx\in\mathbb{R}^3$, $h(\bx;s)$ is holomorphic in $s\in\kS$ by the proof of the `equality' part of Lemma \ref{lem_ab}.

If $0<s<1$, we already proved the positivity of $h(x;s)$ in subsection \ref{subsec0s1}. Moreover,  following \eqref{xxi1} (with $a,b$ replaced by $a/R_3,b/R_3$) and tracing the constants carefully, we get
\begin{equation}\begin{split}
    & \Big(\Big(|\bx\cdot\bxi_1|^{-s}+\Big(\frac{R_1}{r_{\bxi_1}}\Big)^{2+s}|\bx\cdot\bxi_1|^2\Big)*\rho_{a/R_3,b/R_3}\Big)(y_1\bxi_1+y_2\bxi_2+y_3\bxi_3) \\
    & - \Big(\Big(|\bx\cdot\bxi_1|^{-s}+\Big(\frac{R_1}{r_{\bxi_1}}\Big)^{2+s}|\bx\cdot\bxi_1|^2\Big)*\rho_{a/R_3,b/R_3}\Big)(y_2\bxi_2+y_3\bxi_3)\\
    = & R_1^{2+s}r_{\bxi_1}^{-s}g\Big(\frac{y_1}{r_{\bxi_1}};s\Big)
\end{split}\end{equation}
where $r_{\bxi_1}:=(a^2\sin^2\varphi_1+b^2\cos^2\varphi_1)^{1/2}$. Here $g(x;s)$ is defined by the generated potential in 1D as in Lemma \ref{lem_g}. Using \eqref{decomp} and integrating in $\bxi_1$ gives
\begin{equation}\label{h1}
    h(\bx;s) = \tau_s R_1^{2+s} \int_{S^2} r_{\bxi}^{-s}g\Big(\frac{\bxi\cdot\bx}{r_{\bxi}};s\Big)\tilde{\Omega}(\bxi)\rd{\bxi}.
\end{equation}
Lemma \ref{lem_g} gives the boundedness of $g(x;s)$ and $\partial_s g(x;s)$ for fixed $s\in \kS$ and $x$ in any fixed compact set. Therefore the integral on the RHS of \eqref{h1} is well-defined for any such $s$, and one can differentiate with respect to $s$ by differentiating the integrand (noticing that $r_{\bxi}$ is always bounded from below for $a,b>0$). We recall from Remark \ref{rem_ab} that $\tau_s R_1^{2+s}$ is an holomorphic function in $s\in \kS$. Therefore the RHS of \eqref{h1} is holomorphic in $s$, and must agree with $h$ as in its definition \eqref{h} for any $s\in \kS$ since the latter is also holomorphic in $s$. Since the integrand and prefactor in \eqref{h1} are nonnegative for $s\in [1,3)$ by Lemma \ref{lem_g} and Remark \ref{rem_ab} respectively, we see that $h(\bx;s)\ge 0$ as in \eqref{h}, which finishes the proof in the case $a,b>0$.

In the case $a>0,b=0,s\in [1,2),B<\infty$, we will take the limit $b\rightarrow 0^+$ of the previous case. Fix $a$ and $s$, and denote $D_b,A_b,B_b,h_b$ as the domains/quantities depending on $b$.  

Notice that $D_0\subset D_b$ for any $b>0$ due to its definition \eqref{D}. We showed in the `equality part' of Lemma \ref{lem_ab} that $(|\bx|^{-s}\Omega(\bbx;s)+A_b x_1^2 + A_b x_2^2 + B_b x_3^2)*\rho_{a/R_3,b/R_3}$ is constant in $D_b$ for any $b\ge 0$, and thus the same is true in $D_0$. We denote this constant as $C_b$, which is indeed twice the total energy of $\rho_{a/R_3,b/R_3}$ for the potential $|\bx|^{-s}\Omega(\bbx;s)+A_b x_1^2 + A_b x_2^2 + B_b x_3^2$. We also have $A_b\rightarrow A_0$ and $B_b\rightarrow B_0$ as $b\rightarrow 0^+$ by the dominated convergence theorem, since $A_0,B_0<\infty$. Therefore, by the weak lower-semicontinuity of the energy functional, we see that $C_0 \le \liminf_{b\rightarrow0^+} C_b$ since $\rho_{a/R_3,b/R_3}$ converges weakly to $\rho_{a/R_3,0}$.

On the other hand, for $\bx_1\notin D_0$, we have $\bx_1\notin D_b$ for sufficiently small $b>0$. Therefore $[(|\bx|^{-s}\Omega(\bbx;s)+A_b x_1^2 + A_b x_2^2 + B_b x_3^2)*\rho_{a/R_3,b/R_3}](\bx_1)$, as a function of $b$, is continuous at $b=0$. 

The above two limits allow us to pass to the limit on the previously obtained result $h_b(\bx_1;s)\ge 0$ and obtain the desired result $h_0(\bx_1;s)\ge 0$.
\end{proof}

\subsection{Finalizing the proof of Theorem \ref{thm_ell}}

\begin{lemma}\label{lem_ab2}
Assume $0<s<2$ and $\tilde{\Omega}$ a nonnegative axisymmetric smooth function on $S^2$. Let $A(a,b),B(a,b)$ be given by \eqref{lem_ab_2}. Then one of the following is true:
\begin{itemize}
    \item[1.] There exists $(a,b)\in(0,\infty)^2$ such that $A(a,b)=B(a,b)=1$.
    \item[2.] There exists $b\in(0,\infty)$ such that $A(0,b)\le 1$, $B(0,b)=1$.
    \item[3.] There exists $a\in(0,\infty)$ such that $A(a,0)= 1$, $B(a,0)\le 1$.
\end{itemize}
If we instead assume $0<s<3$ and $\tilde{\Omega}\ge c> 0$, then item 1 holds.
\end{lemma}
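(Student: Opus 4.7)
The core observation is that both integrals in \eqref{lem_ab_2} are positively homogeneous of degree $-(2+s)$ in $(a,b)$, so the ratio $R(a,b):=B(a,b)/A(a,b)$ depends only on the direction of $(a,b)$. Moreover, once $(a,b)\in(0,\infty)^2$ with $R(a,b)=1$ is found, rescaling by $\lambda:=A(a,b)^{1/(2+s)}$ yields $A(\lambda a,\lambda b)=B(\lambda a,\lambda b)=1$, i.e., Case 1. My plan is therefore to show that the continuous function $\rho(t):=R(t,1)$ on $(0,\infty)$ takes the value $1$ somewhere, or to verify Cases 2--3 when it does not.

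The key identity driving the argument is
\[
B(a,b)-A(a,b)=\pi\tau_s\Big(\frac{R_1}{R_3}\Big)^{2+s}\int_0^\pi (2-3\sin^2\varphi)\sin\varphi\,(a^2\sin^2\varphi+b^2\cos^2\varphi)^{-(2+s)/2}\tilde{\Omega}(\varphi)\rd{\varphi},
\]
where the weight $2-3\sin^2\varphi$ is positive near the poles $\varphi\in\{0,\pi\}$ and negative near the equator $\varphi=\pi/2$, while the kernel $(a^2\sin^2\varphi+b^2\cos^2\varphi)^{-(2+s)/2}$ concentrates at the equator when $a\ll b$ and at the poles when $a\gg b$.

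The core technical input is a boundary-layer analysis of the two limits. As $t\to 0^+$, rescaling $\varphi=\pi/2+tu$ in the equatorial peak gives, provided $\tilde{\Omega}(\pi/2)>0$, the leading contribution $A(t,1)\sim t^{-(1+s)}\tilde{\Omega}(\pi/2)$; the extra $\cos^2\varphi\sim t^2u^2$ factor in $B(t,1)$ suppresses its near-equator contribution to strictly lower order, so $\rho(t)\to 0$. If $\tilde{\Omega}$ vanishes at the equator fast enough to make both $A(0,1)$ and $B(0,1)$ finite, the same computation instead shows $\rho(t)\to B(0,1)/A(0,1)$. Symmetrically, as $t\to\infty$, bulk integration gives $A(t,1)\sim t^{-(2+s)}$ while a boundary layer of width $\sim 1/t$ around the poles produces $B(t,1)\sim t^{-2}\tilde{\Omega}(0)$, so $\rho(t)\sim t^{s}\to\infty$ when $\tilde{\Omega}(0)>0$; otherwise $\rho(t)\to B(1,0)/A(1,0)$.

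With these limits in hand, the intermediate value theorem concludes the proof. If there exist $0<t_0<t_1<\infty$ with $\rho(t_0)<1<\rho(t_1)$, continuity supplies $t^*$ with $\rho(t^*)=1$ and the rescaling above produces Case 1. The only way IVT can fail on the small-$t$ side is $\rho(t)\ge 1$ throughout, which by the asymptotic analysis forces $A(0,1),B(0,1)$ to be finite with $B(0,1)\ge A(0,1)$; then $b_0:=B(0,1)^{1/(2+s)}$ satisfies $B(0,b_0)=1$ and $A(0,b_0)\le 1$, giving Case 2. The symmetric failure on the large-$t$ side produces Case 3 via $a_0:=A(1,0)^{1/(2+s)}$. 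Under the strict hypothesis $\tilde{\Omega}\ge c>0$ with $0<s<3$, positivity at $0$ and $\pi/2$ always holds and $A(0,1),B(1,0)$ both diverge, so $\rho$ crosses $1$ and Case 1 is automatic. The main difficulty I anticipate is the careful boundary-layer bookkeeping in the two limits, where the relative size of the singular versus bulk contributions depends delicately on the exponent $s$, especially around the transition at $s=1$ where certain integrals pass from convergent to divergent.
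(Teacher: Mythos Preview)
Your proposal is correct and follows essentially the same strategy as the paper: exploit homogeneity to reduce to a one-variable ratio, analyze its limits at the two ends, and apply the intermediate value theorem. The case trichotomy and the scaling argument to normalize to $A=B=1$ (or the degenerate endpoint conditions) are identical in spirit.

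The one noteworthy difference is the choice of parametrization. You fix $b=1$ and vary $a=t$, studying $\rho(t)=B(t,1)/A(t,1)$; the paper fixes $a=1$ and varies $b$, studying $f(b)=A(1,b)/B(1,b)$. These are equivalent (by homogeneity, $f(b)=1/\rho(1/b)$), but the paper's choice makes the limit analysis considerably cleaner and sidesteps exactly the boundary-layer bookkeeping you flag as the main difficulty. Indeed, as $b\to 0^+$ the kernel $(\sin^2\varphi+b^2\cos^2\varphi)^{-(2+s)/2}$ develops its singularity at the \emph{poles} $\varphi\in\{0,\pi\}$; the numerator of $f$ carries an extra $\sin^2\varphi$ factor which kills this singularity outright, so the numerator simply converges to $\int_0^\pi|\sin\varphi|^{1-s}\tilde\Omega\,\rd\varphi$, finite for all $0<s<2$ with no case distinction at $s=1$. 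The denominator carries $\cos^2\varphi$, which does nothing at the poles, so it tends either to $+\infty$ (if $\tilde\Omega(0)>0$) or to a finite positive limit. Hence $\lim_{b\to 0^+}f(b)\in[0,\infty)$ immediately. The $b\to\infty$ limit is symmetric. Your parametrization instead places the singularity at the equator, where neither trigonometric factor vanishes cleanly in the term you want bounded, forcing the layer analysis you outline. Both routes arrive at the same place; the paper's just avoids the work you correctly anticipated.
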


\begin{proof}
We first treat the case $0<s<3$ and $\tilde{\Omega}\ge c> 0$. By homogeneity, it suffices to find the solution to
\begin{align}\label{fb}
    &f(b) := \frac{A(1,b)}{B(1,b)} =     \frac{ \int_0^\pi \sin^2\varphi\, w(\varphi)\rd{\varphi}} {2\int_0^\pi \cos^2\varphi\, w(\varphi)\rd{\varphi}} = 1\\
    &\quad \mbox{with } w(\varphi):= \sin \varphi (\sin^2\varphi+b^2\cos^2\varphi)^{-(2+s)/2}\tilde{\Omega}(\varphi).
\end{align}
When $b\rightarrow 0^+$, the numerator converges to $\int_0^\pi |\sin\varphi|^{1-s}\tilde{\Omega}(\varphi)\rd{\varphi}$ and thus remains bounded (for $0<s<2$), while the denominator goes to infinity because it has a singularity like $|\sin\varphi|^{-1-s}$ and $\tilde{\Omega}\ge c > 0$. Therefore $\lim_{b\rightarrow 0^+}f(b) = 0$. For the case $2\le s < 3$, both the numerator and the denominator go to infinity, but it is clear that the denominator is much larger near the singularities $\varphi=0,\pi$, and thus $\lim_{b\rightarrow 0^+}f(b) = 0$ is still true. Similarly $\lim_{b\rightarrow \infty}f(b) = \infty$. Since $f$ is continuous, we see that there exists $b>0$ with $f(b)=1$.

Then we treat the general case $\tilde{\Omega}\ge 0$ with $0<s<2$ assumed. As $b\rightarrow 0^+$, the numerator of \eqref{fb} still converges to $\int_0^\pi |\sin\varphi|^{1-s}\tilde{\Omega}(\varphi)\rd{\varphi}$, while the denominator converges to $2\int_0^\pi \cos^2\varphi(\sin\varphi)^{-1-s} \tilde{\Omega}(\varphi)\rd{\varphi}$, the latter being a positive number or infinity. Therefore $\lim_{b\rightarrow 0^+}f(b)\in [0,\infty)$. Similarly $\lim_{b\rightarrow \infty}f(b) \in (0,\infty]$. 

If $\lim_{b\rightarrow 0^+}f(b) \ge 1$, then $A(1,0)\ge B(1,0)$, and thus one can find $a>0$ such that item 3 in the statement of the lemma holds by using the homogeneity. If $\lim_{b\rightarrow \infty}f(b) \le 1$, then one can find $b>0$ such that item 2 holds. If $\lim_{b\rightarrow 0^+}f(b) < 1$ and $\lim_{b\rightarrow \infty}f(b) > 1$, then item 1 holds as in the previously considered case $\tilde{\Omega}\ge c> 0$.

\end{proof}

\begin{proof}[Proof of Theorem \ref{thm_ell}]
If $\tilde{\Omega}\ge c > 0$, then Lemma \ref{lem_ab2} gives a pair $(a,b)\in (0,\infty)^2$ such that $A(a,b)=B(a,b)=1$. Then Lemma \ref{lem_ab} and Lemma \ref{lem_EL} show that $\rho_{a,b}$ is the unique energy minimizer for $W$ in \eqref{W}, since $W$ has the LIC property by Theorem \ref{thm_LICequiv}.

Then we treat the general case of $\tilde{\Omega}\ge 0$. We may assume $0<s<2$ because $\tilde{\Omega}$ is always strictly positive in the case $2\le s<3$ due to \eqref{lem_FT_2}, \eqref{lem_FT_2b} and the assumed strict positivity of $\Omega$.  One of the three items of Lemma \ref{lem_ab2} must happen, and the conclusion can be obtained as before in the case of item 1. If item 2 happens, i.e., there exists $b\in(0,\infty)$ such that $A(0,b)\le 1$, $B(0,b)=1$, then Lemma \ref{lem_ab} shows that $(|\bx|^{-s}\Omega(\bbx)+A(0,b) x_1^2 + A(0,b) x_2^2 + x_3^2)*\rho_{0,b}$ achieves minimum on $\supp\rho_{0,b}$. Since $A(0,b)\le 1$, the same is true for $W*\rho_{0,b} = (|\bx|^{-s}\Omega(\bbx)+x_1^2 + x_2^2 + x_3^2)*\rho_{0,b}$. Therefore Lemma \ref{lem_EL} show that $\rho_{a,b}$ is the unique energy minimizer for $W$. The case of item 3 of Lemma \ref{lem_ab2} is similar.

\end{proof}

\begin{remark}\label{rem_abuni}
As a byproduct of the uniqueness part of Theorem \ref{thm_ell}, the pair $(a,b)$ as in Lemma \ref{lem_ab2} is unique.
\end{remark}

\section{Collapse to lower dimensions for large $\alpha$}\label{sec_lowerdim}

In this section we prove that the energy minimizers for the potential $W_\alpha$ (defined in \eqref{Walpha}) will collapse into 1D/2D for sufficiently large $\alpha$, for certain ranges of $s$ and under certain non-degeneracy conditions. The candidates of the lower-dimensional minimizers $\rho_{\textnormal{1D}}$ and $\rho_{\textnormal{2D}}$, as given by \eqref{rho1d2d}, are the minimizers of restricted isotropic Riesz interaction energies.

\begin{theorem}\label{thm_collapse1}
For fixed $0<s<1$, there exists $C_*$ such that the following holds: For $\Omega$ satisfying {\bf (Hx)}, if the minimum of $\Omega$ is achieved at $\theta=0$ with $\Omega(0)=1$ and the non-degeneracy condition
\begin{equation}\label{thm_collapse1_1}
    \Omega(\theta) \ge 1 + C_*|\theta|^2,\quad \forall\theta\in \Big[0,\frac{\pi}{2}\Big]
\end{equation}
then $\rho_{\textnormal{1D}}$ is the unique energy minimizer.

As a result, if $\omega$ satisfies {\bf (hx)}, achieves minimum at $\omega(0)=0$, and satisfies the non-degeneracy condition \begin{equation}
    \omega(\theta) \ge  c_\omega|\theta|^2,\quad \forall\theta\in \Big[0,\frac{\pi}{2}\Big]
\end{equation}
for some $c_\omega>0$. Then there exists a unique $0<\alpha_*\le C_*/c_\omega$, such that for any $\alpha>\alpha_*$, $\rho_{\textnormal{1D}}$ is the unique energy minimizer for $W_\alpha$, and for any $0\le \alpha < \alpha_*$, $\rho_{\textnormal{1D}}$ is not an energy minimizer for $W_\alpha$.
\end{theorem}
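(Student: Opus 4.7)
The proof is a comparison argument with a specially designed LIC potential, following the strategy of \cite{CS22}. The plan is to construct an axisymmetric auxiliary potential $\tilde W(\bx) = |\bx|^{-s}\Omega^*(\bbx) + |\bx|^2$ with three properties: (i) $\tilde W \le W$ pointwise and $\tilde W = W$ on the $x_3$-axis, equivalently $\Omega^*(0) = 1$ and $\Omega^* \le \Omega$ on $S^2$; (ii) $\widetilde{\Omega^*} \ge 0$, giving the LIC property by Theorem \ref{thm_LICequiv}; and (iii) the unique minimizer of $\tilde W$ is $\rho_{\textnormal{1D}}$. Given such a $\tilde W$, every probability measure $\rho$ on $\mathbb{R}^3$ satisfies
\begin{equation*}
E_W[\rho] \ge E_{\tilde W}[\rho] \ge E_{\tilde W}[\rho_{\textnormal{1D}}] = E_W[\rho_{\textnormal{1D}}],
\end{equation*}
where the first inequality uses $W \ge \tilde W$, the second is minimality for $\tilde W$ from Theorem \ref{thm_ell}, and the equality holds because $\supp\rho_{\textnormal{1D}} - \supp\rho_{\textnormal{1D}}$ lies on the $x_3$-axis where $W = \tilde W$. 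If the chain is saturated, then $E_{\tilde W}[\rho] = E_{\tilde W}[\rho_{\textnormal{1D}}]$, and LIC uniqueness for $\tilde W$ forces $\rho = \rho_{\textnormal{1D}}$ up to translation, establishing the uniqueness statement.

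The construction of $\Omega^*$ is the heart of the argument and constitutes the main technical obstacle. The proposal is to define $\Omega^*$ via the 1D superposition \eqref{decomp} by prescribing the nonnegative axisymmetric Fourier angular profile
\begin{equation*}
\widetilde{\Omega^*}(\bxi) = c(s)\cos^4\varphi, \qquad c(s) = \frac{5-s}{4\pi\tau_s},
\end{equation*}
with $c(s)$ tuned so that $\Omega^*(0) = 1$; nonnegativity of $\widetilde{\Omega^*}$ is automatic. Substituting this $\widetilde{\Omega^*}$ into the formulas \eqref{lem_ab_2} at $a = 0$, $b = R_1/R_3$ (so that $\rho_{0,b} = \rho_{\textnormal{1D}}$), explicit Beta-function-type integrals yield $B^*(0, R_1/R_3) = 1$ and $A^*(0, R_1/R_3) = \tfrac{1}{3-s} \le 1$ for $s \in (0,1)$, which realizes item 2 of Lemma \ref{lem_ab2}; Theorem \ref{thm_ell} then identifies $\rho_{\textnormal{1D}}$ as the unique minimizer of $\tilde W$. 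The fourth power $\cos^4\varphi$ is the smallest even power making $\widetilde{\Omega^*}$ vanish fast enough at $\varphi = \pi/2$ for $A^*(0, R_1/R_3)$ to be both finite and no larger than $1$. Since $\Omega^*$ is smooth on $S^2$ with $\Omega^*(0) = 1$ and zero angular derivative at the pole (by axisymmetry),
\begin{equation*}
C_* := \max\!\Big(1,\ \sup_{\theta \in (0, \pi/2]}\frac{\Omega^*(\theta) - 1}{\theta^2}\Big)
\end{equation*}
is a finite positive constant depending only on $s$, and the hypothesis \eqref{thm_collapse1_1} then gives $\Omega \ge \Omega^*$ on $[0, \pi/2]$, hence on $[0, \pi]$ by the $\theta \mapsto \pi - \theta$ symmetry, validating (i).

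For the second part of the theorem, $\alpha \ge C_*/c_\omega$ yields $1 + \alpha\omega(\theta) \ge 1 + C_* \theta^2$, so the first part applies to $W_\alpha$ and produces $\rho_{\textnormal{1D}}$ as the unique minimizer. To pin down the sharp threshold I would use monotonicity in $\alpha$: for $\alpha' > \alpha$,
\begin{equation*}
E_{\alpha'}[\rho] - E_{\alpha'}[\rho_{\textnormal{1D}}] = (E_\alpha[\rho] - E_\alpha[\rho_{\textnormal{1D}}]) + \frac{\alpha' - \alpha}{2}\iint |\bx - \by|^{-s} \omega(\overline{\bx - \by})\,d\rho(\bx)\,d\rho(\by),
\end{equation*}
which is nonnegative whenever $\rho_{\textnormal{1D}}$ is the unique minimizer at $\alpha$, since the second term is nonnegative and vanishes at $\rho = \rho_{\textnormal{1D}}$ (because $\omega$ is zero on the $x_3$-axis). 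Thus the set $S := \{\alpha \ge 0 : \rho_{\textnormal{1D}}\text{ is the unique minimizer of }W_\alpha\}$ is upward-closed, and $\alpha_* := \inf S$ satisfies $\alpha_* \le C_*/c_\omega$ by the first part, while the nondegenerate ellipsoid minimizer for small $\alpha$ from Theorem \ref{thm_ell} forces $\alpha_* > 0$; the upward-closedness of $S$ then precludes $\rho_{\textnormal{1D}}$ from being a minimizer for $\alpha < \alpha_*$.
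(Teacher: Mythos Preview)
Your proof is correct and follows the same comparison strategy as the paper: construct an auxiliary axisymmetric potential $\tilde W$ with $\tilde W\le W$, equality on the $x_3$-axis, LIC (via $\widetilde{\Omega^*}\ge 0$), and whose unique minimizer is $\rho_{\textnormal{1D}}$; then the chain $E_W[\rho]\ge E_{\tilde W}[\rho]\ge E_{\tilde W}[\rho_{\textnormal{1D}}]=E_W[\rho_{\textnormal{1D}}]$ forces the conclusion. The paper packages the construction as Lemma~\ref{lem_omegast1}.

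The genuine difference lies in the construction of $\widetilde{\Omega^*}$. The paper takes $\tilde\Omega_{*,1}$ to be a mollifier supported in $\varphi\in[\epsilon/2,\epsilon]\cup[\pi-\epsilon,\pi-\epsilon/2]$ with $\epsilon$ small, and argues qualitatively that $A<1$ because the extra factor $\tan^2\varphi$ in the $A$-integral is small on this support. You instead take the explicit profile $\widetilde{\Omega^*}=c(s)\cos^4\varphi$ and compute $B=1$ and $A=\tfrac{1}{3-s}<1$ by elementary Beta integrals; this is cleaner and gives a completely explicit $\Omega^*$, hence (in principle) an explicit $C_*$. One small correction: with $\cos^2\varphi$ the $A$-integral is still \emph{finite} for $0<s<1$, but evaluates to $\tfrac{1}{1-s}>1$; so the fourth power is needed not for finiteness but to push $A$ below $1$, as your calculation indeed shows. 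The paper additionally verifies that $\Omega_{*,1}$ achieves its minimum only at $\theta=0$ (by an energy-comparison trick), but your argument correctly avoids needing this: only the upper bound $\Omega^*(\theta)\le 1+C_*\theta^2$ matters for the comparison, and that follows from smoothness and axisymmetry at the pole as you say.

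Your treatment of the second part (the threshold $\alpha_*$) via monotonicity in $\alpha$ is also correct; the paper does not spell this out. One refinement worth noting for the ``$\rho_{\textnormal{1D}}$ is not a minimizer for $\alpha<\alpha_*$'' clause: if $\rho_{\textnormal{1D}}$ were merely \emph{a} minimizer at some $\alpha_0<\alpha_*$, your identity shows that at any $\alpha'>\alpha_0$ equality forces $\iint|\bx-\by|^{-s}\omega(\overline{\bx-\by})\,\rho\,\rho=0$, hence (by the nondegeneracy $\omega(\theta)\ge c_\omega\theta^2$) $\supp\rho$ lies on a single vertical line, whence $\rho=\rho_{\textnormal{1D}}$ by 1D uniqueness; so $(\alpha_0,\infty)\subset S$ and $\alpha_*\le\alpha_0$, a contradiction.
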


\begin{theorem}\label{thm_collapse2}
For fixed $0<s<2$, there exists $C_*$ such that the following holds: For $\Omega$ satisfying {\bf (Hx)}, if the minimum of $\Omega$ is achieved at $\theta=\frac{\pi}{2}$ with $\Omega(\frac{\pi}{2})=1$ and the non-degeneracy condition
\begin{equation}
    \Omega(\theta) \ge 1 + C_*\Big|\theta-\frac{\pi}{2}\Big|^2,\quad \forall\theta\in \Big[0,\frac{\pi}{2}\Big]
\end{equation}
then $\rho_{\textnormal{2D}}$ is the unique energy minimizer.

As a result, if $\omega$ satisfies {\bf (hx)}, achieves minimum at $\omega(\frac{\pi}{2})=0$, and satisfies the non-degeneracy condition \begin{equation}
    \omega(\theta) \ge  c_\omega\Big|\theta-\frac{\pi}{2}\Big|^2,\quad \forall\theta\in \Big[0,\frac{\pi}{2}\Big]
\end{equation}
for some $c_\omega>0$. Then there exists a unique $0<\alpha_*\le C_*/c_\omega$, such that for any $\alpha>\alpha_*$, $\rho_{\textnormal{2D}}$ is the unique energy minimizer for $W_\alpha$, and for any $0\le \alpha < \alpha_*$, $\rho_{\textnormal{2D}}$ is not an energy minimizer for $W_\alpha$.
\end{theorem}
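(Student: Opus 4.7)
The strategy is the comparison-potential approach of \cite{CS22}, now adapted with the planar candidate $\rho_{\textnormal{2D}}$. I would first introduce the auxiliary axisymmetric potential
\begin{equation*}
V_{\textnormal{aux}}(\bx) := |\bx|^{-s}\bigl(1 + C_*\cos^2\theta\bigr) + |\bx|^2 = |\bx|^{-s} + C_* |\bx|^{-s-2} x_3^2 + |\bx|^2,
\end{equation*}
where $\cos\theta = \bbx \cdot \hat e_3 = x_3/|\bx|$. The elementary inequality $|\phi| \ge |\sin\phi|$, applied with $\phi = \theta - \pi/2$, gives $|\theta-\pi/2|^2 \ge \cos^2\theta$, so the non-degeneracy of $\Omega$ (extended from $[0,\pi/2]$ to $[0,\pi]$ by the axisymmetric evenness $\Omega(\pi-\theta)=\Omega(\theta)$) yields $\Omega(\bbx) \ge 1 + C_*\cos^2\theta$ pointwise on $S^2$. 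Consequently $V_{\textnormal{aux}} \le W$ everywhere, with equality precisely on $\{x_3=0\}$, and $V_{\textnormal{aux}}$ has the form $|\bx|^{-s}\Omega_*(\bbx) + |\bx|^2$ with $\Omega_*$ satisfying \textbf{(Hx)}.

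The technical heart is the claim that $\rho_{\textnormal{2D}}$ is the unique global minimizer of $\tilde E[\rho] := \frac{1}{2}\iint V_{\textnormal{aux}}(\bx-\by)\,\rd\rho(\bx)\,\rd\rho(\by)$ once $C_*$ is taken large enough. On $\{x_3=0\}$, $V_{\textnormal{aux}}$ restricts to the isotropic two-dimensional Riesz-type potential $|\bx|^{-s}+|\bx|^2$, and $\rho_{\textnormal{2D}}$ is by definition the 2D minimizer for this restriction; hence $(V_{\textnormal{aux}}*\rho_{\textnormal{2D}})$ is constant on $\supp\rho_{\textnormal{2D}}$ and bounded below by that constant throughout the plane. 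For the off-plane Euler-Lagrange bound I would compute
\begin{equation*}
(V_{\textnormal{aux}}*\rho_{\textnormal{2D}})(\bx) - (V_{\textnormal{aux}}*\rho_{\textnormal{2D}})(\bx_\perp,0) = \int \bigl[(|\bz|^{-s} - |\bz_\perp|^{-s}) + C_* x_3^2 |\bz|^{-s-2} + x_3^2\bigr]\rd\rho_{\textnormal{2D}}(\by_\perp),
\end{equation*}
with $\bz = \bx-(\by_\perp,0)$. The (non-positive) Riesz-loss term is of order $x_3^{2-s}$ by splitting the $\by_\perp$-integral at $r = |\bx_\perp-\by_\perp| = |x_3|$ and using $\rho_{\textnormal{2D}} \in L^\infty$; meanwhile $C_* x_3^2$ times the singular Riesz convolution $\int|\bz|^{-s-2}\rd\rho_{\textnormal{2D}}$ contributes at order $C_* x_3^{2-s}$ for $\bx_\perp$ inside $\supp\rho_{\textnormal{2D}}$, and the confining $x_3^2$ handles the far field. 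Choosing $C_*$ uniformly large (possible by compactness of $\supp\rho_{\textnormal{2D}}$) makes the positive contributions dominate, yielding the strict off-plane inequality.

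With the Euler-Lagrange condition of Lemma \ref{lem_EL} verified, minimality of $\rho_{\textnormal{2D}}$ for $\tilde E$ is immediate whenever $V_{\textnormal{aux}}$ is LIC, which is automatic for $s \ge 2$ via \eqref{lem_FT_2}. For $0 < s < 2$ the auxiliary may fail LIC at large $C_*$; in that case I would extract any minimizer $\rho^*$ of $\tilde E$ via Lemma \ref{lem_exist} and use the strict off-plane inequality together with Proposition \ref{prop_concave} (ruling out interior points off $\{x_3=0\}$) to confine $\rho^*$ to the plane, then invoke two-dimensional Riesz uniqueness to conclude $\rho^* = \rho_{\textnormal{2D}}$. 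The comparison chain $E[\rho] \ge \tilde E[\rho] \ge \tilde E[\rho_{\textnormal{2D}}] = E[\rho_{\textnormal{2D}}]$ establishes minimality for $E$; saturation forces $\rho\otimes\rho$ onto $\{V_{\textnormal{aux}}=W\} = \{(\bx,\by):x_3=y_3\}$, so $\rho$ lies in a horizontal plane, reducing uniqueness (after translation) to the 2D Riesz problem.

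For the second statement, substituting $\Omega = 1 + \alpha\omega$ turns the hypothesis on $\omega$ into the non-degeneracy of $\Omega$ with constant $\alpha c_\omega$, so the first part applies as soon as $\alpha \ge C_*/c_\omega$. Uniqueness of the threshold $\alpha_*$ follows from the affine monotonicity $\alpha \mapsto E_\alpha[\rho] - E_\alpha[\rho_{\textnormal{2D}}] = (E_0[\rho] - E_0[\rho_{\textnormal{2D}}]) + \alpha\iint|\bx-\by|^{-s}\omega(\overline{\bx-\by})\rd\rho\rd\rho$: the slope is nonnegative, and strictly positive for any $\rho$ with mass away from $\{x_3=0\}$, so the set of $\alpha$ for which $\rho_{\textnormal{2D}}$ minimizes $E_\alpha$ is an upper interval. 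The main obstacle throughout is the quantitative off-plane Euler-Lagrange estimate and, in the regime $0<s<2$ where $V_{\textnormal{aux}}$ can fail LIC, the rigidity argument through Proposition \ref{prop_concave} needed to confine minimizers to the plane without invoking Lemma \ref{lem_EL} directly.
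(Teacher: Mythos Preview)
Your comparison strategy is on the right track, but the choice of auxiliary potential $\Omega_*=1+C_*\cos^2\theta$ creates a gap you have not closed. For the range $0<s<2$ covered by the theorem, the angle function $\tilde\omega$ of $\omega=\cos^2\theta$ is sign-changing (for $s=1$ this is exactly the dislocation case of \cite{CMMRSV2}, where $\alpha_L=1$ is finite), so for the large $C_*$ your off-plane estimate needs, $V_{\textnormal{aux}}$ is \emph{not} LIC. Your proposed workaround does not repair this: Proposition~\ref{prop_concave} only forbids interior points in superlevel sets of a minimizer $\rho^*$, which says nothing about confinement to $\{x_3=0\}$; and the ``strict off-plane inequality'' you derive concerns $V_{\textnormal{aux}}*\rho_{\textnormal{2D}}$, not $V_{\textnormal{aux}}*\rho^*$, so it carries no information about where an unknown minimizer sits. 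Without LIC you cannot pass from the Euler--Lagrange condition for $\rho_{\textnormal{2D}}$ to global minimality via Lemma~\ref{lem_EL}, and you have no substitute argument.

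The paper avoids this by constructing the comparison potential on the Fourier side so that LIC is built in. In Lemma~\ref{lem_omegast2} one takes $\tilde\Omega_{*,2}\ge 0$ to be a smooth axisymmetric function concentrated near $\varphi=\pi/2$ (written via \eqref{tomegast2}), recovers $\Omega_{*,2}$ from \eqref{omegast2_1}, and then uses Lemma~\ref{lem_ab} with $a=R_2/R_3$, $b=0$ to verify the Euler--Lagrange condition $A=1$, $B<1$ for $\rho_{\textnormal{2D}}$. Since $\tilde\Omega_{*,2}\ge 0$, Lemma~\ref{lem_EL} applies directly and $\rho_{\textnormal{2D}}$ is the unique minimizer for $\Omega_{*,2}$; the comparison chain then runs exactly as you wrote. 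The point is that one must engineer the comparison profile so that LIC holds, rather than pick a convenient explicit one and hope to argue around its failure.
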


\begin{remark}
In the above two theorems, the requirements on the range of $s$ are necessary, because a larger $s$ would make $\rho_{\textnormal{1D}}$ (respectively, $\rho_{\textnormal{2D}}$) having infinite energy, and cannot be an energy minimizer for any $\alpha$. In particular, they are not applicable to the case $1\le s < 2$ and $\omega$ achieves minimum at $\omega(0)=0$, which will be studied in detail in the next section.
\end{remark}

\begin{remark}
The `non-degeneracy condition' in Theorem \ref{thm_collapse1} is necessary. In fact, if instead one has $\Omega(\theta)\le 1+C|\theta|^\kappa$ for some $\kappa>2$, then one can apply \cite[Theorem 5.5]{CS22} in the $x_1x_3$-plane to show that $\rho_{\textnormal{1D}}$ is not a Wasserstein-$\infty$ local minimizer. We expect similar result also holds for Theorem \ref{thm_collapse2}.
\end{remark}

The proof is based on a comparison argument similar to \cite[Theorem 5.1]{CS22}, once we have the following two lemmas. In fact, to prove Theorem \ref{thm_collapse1}, we observe that \eqref{thm_collapse1_1} implies $\Omega(\theta)\ge \Omega_{*,1}(\theta)$ (the latter given by Lemma \ref{lem_omegast1}) for any $\theta$ if $C_*$ is sufficiently large. Also, for the corresponding energies, $E[\rho_{\textnormal{1D}}]=E_{*,1}[\rho_{\textnormal{1D}}]$ since $\Omega(0)= \Omega_{*,1}(0)$. Lemma \ref{lem_omegast1} implies that $\rho_{\textnormal{1D}}$ is the unique minimizer for $\Omega_{*,1}$, and then a comparison argument shows that it is also the unique minimizer for $\Omega$. Theorem \ref{thm_collapse2} can be proved in the same way by using $\Omega_{*,2}$ from Lemma \ref{lem_omegast2}.

\begin{lemma}\label{lem_omegast1}
Assume $0<s<1$. There exists $\Omega_{*,1}$ satisfying {\bf (Hx)} with
\begin{itemize}
    \item $\Omega_{*,1}$ achieves its minimal value 1 only at $\theta=0$.
    \item $\tilde{\Omega}_{*,1}\ge 0$.
    \item The associated potential satisfies that $(|\bx|^{-s}\Omega_{*,1}(\bbx)+|\bx|^2)*\rho_{\textnormal{1D}}$ achieves minimum on $\supp\rho_{\textnormal{1D}}$.
\end{itemize}
\end{lemma}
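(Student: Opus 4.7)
The plan is to construct $\Omega_{*,1}$ by first prescribing its Fourier-side counterpart. I would take $\tilde{\Omega}_{*,1}$ to be a smooth nonnegative axisymmetric bump on $S^2$ supported in two narrow polar caps $\{\varphi\le\epsilon\}\cup\{\varphi\ge\pi-\epsilon\}$, with the aperture $\epsilon>0$ to be chosen small later, and normalized by
\[
\tau_s\int_{S^2}|\xi_3|^{-s}\tilde{\Omega}_{*,1}(\bxi)\rd{\bxi}=1.
\]
Then $\Omega_{*,1}$ is defined through the right-hand side of \eqref{lem_FT_4}, and Lemma \ref{lem_FT} gives $\cF[|\bx|^{-s}\Omega_{*,1}(\bbx)]=|\xi|^{-3+s}\tilde{\Omega}_{*,1}(\bxi)$, so the second bullet $\tilde{\Omega}_{*,1}\ge 0$ holds by construction. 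Axisymmetry, evenness and strict positivity of $\Omega_{*,1}$ are inherited from those of $\tilde{\Omega}_{*,1}$ together with the integrability of the kernel $|\bbx\cdot\bxi|^{-s}$ (available since $s<1$); smoothness in $\bbx$ follows from standard mapping properties of the rotation-invariant integral operator in \eqref{lem_FT_4}. The normalization makes $\Omega_{*,1}((0,0,1))=1$ directly, and for $\bbx\ne(0,0,\pm 1)$ the great circle $\{\bxi:\bbx\cdot\bxi=0\}$ intersects the support of $\tilde{\Omega}_{*,1}$ more deeply than it does for $\bbx=(0,0,1)$, forcing $\Omega_{*,1}(\bbx)>1$; this settles the first bullet.

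The third bullet is the crux. Writing $\rho_{\textnormal{1D}}(\by)=\rho_1(y_3)\delta(y_1)\delta(y_2)$ with $\rho_1$ the unique 1D Riesz-plus-quadratic minimizer, and setting
\[
\phi(t):=\int_{\mathbb{R}}|t-z|^{-s}\rho_1(z)\rd{z},\qquad V:=\int_{\mathbb{R}}z^2\rho_1(z)\rd{z},
\]
the 1D Euler--Lagrange condition yields $\phi(t)+t^2+V\ge 2E^{(1)}$ for every $t\in\mathbb{R}$, with equality on $[-R_1,R_1]$ (where $E^{(1)}$ is the 1D minimum energy). Applying \eqref{decomp} I would compute
\[
U(\bx):=(|\bx|^{-s}\Omega_{*,1}(\bbx)+|\bx|^2)*\rho_{\textnormal{1D}}(\bx)=\tau_s\int_{S^2}|\xi_3|^{-s}\phi\Big(\tfrac{\bx\cdot\bxi}{\xi_3}\Big)\tilde{\Omega}_{*,1}(\bxi)\rd{\bxi}+|\bx|^2+V,
\]
and substituting the pointwise bound $\phi(t)\ge 2E^{(1)}-V-t^2$ obtain, using the normalization above,
\[
U(\bx)\ge 2E^{(1)}+|\bx|^2-Q(\bx),\qquad Q(\bx):=\tau_s\int_{S^2}|\xi_3|^{-s-2}(\bx\cdot\bxi)^2\tilde{\Omega}_{*,1}(\bxi)\rd{\bxi}.
\]
Expanding $(\bx\cdot\bxi)^2$ and averaging in $\nu$ using axisymmetry of $\tilde{\Omega}_{*,1}$ gives $Q(\bx)=A_r(x_1^2+x_2^2)+A_3 x_3^2$; the normalization makes $A_3=1$ identically, while the $A_r$-integrand carries an extra factor $\sin^2\varphi$ that vanishes at the poles where $\tilde{\Omega}_{*,1}$ is supported, so $A_r=O(\epsilon^2)$. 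Choosing $\epsilon$ small enough that $A_r<1$ yields $U(\bx)\ge 2E^{(1)}+(1-A_r)(x_1^2+x_2^2)\ge 2E^{(1)}$, with equality if and only if $\bx$ lies on the $x_3$-axis and $x_3\in[-R_1,R_1]$, i.e.\ $\bx\in\supp\rho_{\textnormal{1D}}$.

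The main obstacle is precisely the quadratic correction $Q(\bx)$: the pointwise lower bound on $\phi$ is tight only on $[-R_1,R_1]$ and produces a spurious negative quadratic term which, after averaging against the singular weight $|\xi_3|^{-s-2}$, could in principle dominate $|\bx|^2$ off the axis. The entire point of concentrating $\tilde{\Omega}_{*,1}$ in narrow polar caps while keeping $A_3=1$ fixed via the normalization is to suppress the horizontal coefficient $A_r$ below $1$; a non-concentrated choice of $\tilde{\Omega}_{*,1}$ would generically produce $A_r$ of order unity and break the inequality $U(\bx)\ge 2E^{(1)}$ in the horizontal directions.
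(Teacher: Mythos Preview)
Your proof is correct and follows essentially the same route as the paper: construct $\tilde{\Omega}_{*,1}\ge 0$ as a smooth axisymmetric bump supported in narrow polar caps, normalize so that $B=1$ (your $A_3=1$), and then use the one-dimensional projection formula \eqref{decomp} together with the 1D Euler--Lagrange inequality for $\rho_1$ to show that the generated potential achieves its minimum on $\supp\rho_{\textnormal{1D}}$. The paper packages this computation through Lemma~\ref{lem_ab} (applied with $a=0$, $b=R_1/R_3$), where your $A_r$ is exactly the paper's $A$; you unpack that lemma and redo the calculation directly, but the mechanism is identical: the extra factor $\tan^2\varphi$ in $A_r$ vanishes at the poles, so concentrating $\tilde{\Omega}_{*,1}$ there forces $A_r<1$.

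One point deserves care. Your justification of the first bullet via ``the great circle $\{\bxi:\bbx\cdot\bxi=0\}$ intersects the support of $\tilde{\Omega}_{*,1}$ more deeply'' is not correct as stated: for small $\theta>0$ and $\epsilon$ small, that great circle does not meet the polar caps at all. The inequality $\Omega_{*,1}(\theta)>\Omega_{*,1}(0)$ is nonetheless true, but for a different reason: writing $\bbx_\theta\cdot\bxi=\cos\theta\cos\varphi+\sin\theta\sin\varphi\cos\nu$ and averaging in $\nu$, Jensen's inequality applied to the strictly convex map $t\mapsto|t|^{-s}$ gives $\frac{1}{2\pi}\int_0^{2\pi}|\bbx_\theta\cdot\bxi|^{-s}\rd\nu>|\cos\theta\cos\varphi|^{-s}\ge|\cos\varphi|^{-s}$ whenever $\sin\theta\sin\varphi\ne 0$, which yields the strict inequality after integration against $\tilde{\Omega}_{*,1}$. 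The paper instead argues indirectly: once bullets two and three are in hand, $\rho_{\textnormal{1D}}$ is the unique minimizer by Lemma~\ref{lem_EL}, and any direction $\theta$ with $\Omega_{*,1}(\theta)\le 1$ would produce a rotated competitor of equal or smaller energy, a contradiction.
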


\begin{proof}

We construct $\Omega_{*,1}$ via constructing $\tilde{\Omega}_{*,1}$ and applying the formula \eqref{lem_FT_4}. We will construct $\tilde{\Omega}_{*,1}$ as a smooth nonnegative axisymmetric even function on $S^2$, supported near $\varphi=0$ (thus also $\varphi=\pi$). In this case, one is allowed to apply Lemma \ref{lem_ab} with $a=0$ and $b=R_1/R_3$ (for which $\rho_{a,b}=\rho_{\textnormal{1D}}$) since $\tilde{\Omega}_{*,1}$ vanishes near $\pi/2$ and $A$ is clearly finite. The condition $\Omega_{*,1}(0)=1$ is equivalent to $\rho_{\textnormal{1D}}$ being a steady state for $|\bx|^{-s}\Omega_{*,1}(\bbx)+|\bx|^2$, i.e., $B=1$. This gives the requirement
\begin{equation}\label{omegast1}
    1= 2\pi\tau_{s}\int_0^\pi|\cos\varphi|^{-s}\sin\varphi\,\tilde{\Omega}_{*,1}(\varphi) \rd{\varphi}\,.
\end{equation}
Then $(|\bx|^{-s}\Omega_{*,1}(\bbx)+|\bx|^2)*\rho_{\textnormal{1D}}$ achieves minimum on $\supp\rho_{\textnormal{1D}}$ as long as
\begin{equation}
    A = \pi\tau_s\int_0^\pi\tan^2\varphi |\cos\varphi|^{-s}\sin\varphi\, \tilde{\Omega}_{*,1}(\varphi)\rd{\varphi} < 1
\end{equation}
comparing again to the conclusion in Lemma \ref{lem_ab}.

Let us now show that indeed we can find a function $\tilde{\Omega}_{*,1}$ such that $A<1$.
Compared with the last integral in \eqref{omegast1}, there is an extra factor $\tan^2\varphi$ in the definition of $A$, which is close to zero near $\varphi=0,\pi$. Therefore, by taking $|\cos\varphi|^{-s}\sin\varphi\, \tilde{\Omega}_{*,1}(\varphi)$ as a mollifier supported in $\varphi\in [\epsilon/2,\epsilon]\cup[\pi-\epsilon,\pi-\epsilon/2]$ with $\epsilon>0$ small and satisfying \eqref{omegast1}, we can guarantee that $A<1$ is also satisfied. Then we may divide by $|\cos\varphi|^{-s}\sin\varphi$ and obtain the desired $\tilde{\Omega}_{*,1}$. 

In fact, the only property not checked yet is that $\Omega_{*,1}(\theta)\ge 1$, equality only achieved at $0$ and $\pi$. To see this, one notices that the LIC property and the fact that $(|\bx|^{-s}\Omega_{*,1}(\bbx)+|\bx|^2)*\rho_{\textnormal{1D}}$ achieves minimum on $\supp\rho_{\textnormal{1D}}$ implies that $\rho_{\textnormal{1D}}$ is the unique energy minimizer, by Lemma \ref{lem_EL}. If $\Omega_{*,1}(\theta)\le 1$ at some $\theta$ other than $0,\pi$, then a rotated version of $\rho_{\textnormal{1D}}$ along the direction of $\theta$ would have equal or smaller energy than $\rho_{\textnormal{1D}}$, a contradiction.
\end{proof}

\begin{lemma}\label{lem_omegast2}
Assume $0<s<2$. There exists $\Omega_{*,2}$ satisfying {\bf (Hx)} with
\begin{itemize}
    \item $\Omega_{*,2}$ achieves its minimal value 1 only at $\theta=\pi/2$.
    \item $\tilde{\Omega}_{*,2}\ge 0$.
    \item The associated potential satisfies that $(|\bx|^{-s}\Omega_{*,2}(\bbx)+|\bx|^2)*\rho_{\textnormal{2D}}$ achieves minimum on $\supp\rho_{\textnormal{2D}}$.
\end{itemize}

\end{lemma}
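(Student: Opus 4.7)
The plan is to mirror the proof of Lemma~\ref{lem_omegast1} with the pole $\theta=0$ replaced by the equator $\theta=\pi/2$ and $\rho_{\textnormal{1D}}$ replaced by $\rho_{\textnormal{2D}}$. First I would construct $\tilde{\Omega}_{*,2}$ as a smooth nonnegative axisymmetric function on $S^2$ (depending only on $\varphi$) supported in a narrow band $\{|\varphi-\pi/2|\le \epsilon\}$ about the equator for a small $\epsilon>0$, then recover $\Omega_{*,2}$ by Fourier inversion. Because $\tilde{\Omega}_{*,2}$ vanishes near the poles $\varphi=0,\pi$, the quantity
\[
B\Big(\tfrac{R_2}{R_3},0\Big) \,\propto\, \int_0^\pi \cos^2\varphi\,(\sin\varphi)^{-1-s}\,\tilde{\Omega}_{*,2}(\varphi)\,\rd\varphi
\]
is finite for every $0<s<2$, so the degenerate case $a=R_2/R_3,\ b=0$ of Lemma~\ref{lem_ab} applies and gives that $(|\bx|^{-s}\Omega_{*,2}+A(x_1^2+x_2^2)+Bx_3^2)*\rho_{\textnormal{2D}}$ attains its minimum on $\supp\rho_{\textnormal{2D}}$ (which lies in the plane $\{x_3=0\}$).

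The normalization step uses the companion formula $A(R_2/R_3,0)\propto \int_0^\pi (\sin\varphi)^{1-s}\tilde{\Omega}_{*,2}(\varphi)\,\rd\varphi$. Since $\cos^2\varphi$ vanishes to second order at $\varphi=\pi/2$ on the support of $\tilde{\Omega}_{*,2}$, the ratio $B/A=O(\epsilon^2)$ as $\epsilon\to 0$, so a positive scalar rescaling of $\tilde{\Omega}_{*,2}$ arranges $A=1$ and $B<1$ for $\epsilon$ sufficiently small. The equality $A=1$ is precisely the 2D self-similarity condition that makes $\rho_2$ the minimizer of the restricted 2D potential $\Omega_{*,2}(\pi/2)|\bx|^{-s}+A|\bx|^2$, which (by the scaling analysis $\lambda=(A/\Omega_{*,2}(\pi/2))^{1/(s+2)}$) is equivalent to $\Omega_{*,2}(\pi/2)=1$. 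The inequality $B<1$ makes the off-support discrepancy $(1-B)x_3^2$ nonnegative and zero on $\supp\rho_{\textnormal{2D}}$, so $(|\bx|^{-s}\Omega_{*,2}+|\bx|^2)*\rho_{\textnormal{2D}}$ also attains its minimum there. Combined with $\tilde{\Omega}_{*,2}\ge 0$ (hence LIC via Theorem~\ref{thm_LICequiv}) and Lemma~\ref{lem_EL}, this makes $\rho_{\textnormal{2D}}$ the unique global energy minimizer for $|\bx|^{-s}\Omega_{*,2}+|\bx|^2$.

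The main obstacle is to show that the minimum value $1$ is achieved \emph{only} at $\theta=\pi/2$, i.e., $\Omega_{*,2}(\theta)>1$ for $\theta\ne\pi/2$. The rotation trick from Lemma~\ref{lem_omegast1} does not transfer directly, because tilting the support plane of $\rho_{\textnormal{2D}}$ probes an average of $\Omega_{*,2}$ over a great circle rather than a single angle. My plan is instead to argue from the Fourier inversion: for $0<s<1$, formula~\eqref{lem_FT_4} gives $\Omega_{*,2}^\epsilon(\bbx)=\tau_s\int_{S^2}|\bbx\cdot\bxi|^{-s}\tilde{\Omega}_{*,2}(\bxi)\rd\bxi$, and for axisymmetric $\tilde{\Omega}_{*,2}$ concentrated at the equator a direct computation shows $\Omega_{*,2}^\epsilon(\theta)\to C(\sin\theta)^{-s}$ as $\epsilon\to 0$; the limiting profile is strictly minimized at $\theta=\pi/2$ with nondegenerate second derivative, so a perturbation argument yields $\Omega_{*,2}^\epsilon(\theta)>1$ off the equator for $\epsilon$ small. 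For the range $1\le s<2$, the inversion formulas from Appendix~\ref{app_FT} replace \eqref{lem_FT_4}, but the leading behavior near $\theta=\pi/2$ is still governed by the same $(\sin\theta)^{-s}$ profile and the same strict inequality follows.
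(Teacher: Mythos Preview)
Your overall scaffold --- concentrating $\tilde{\Omega}_{*,2}$ near the equator, using the degenerate case $a=R_2/R_3,\ b=0$ of Lemma~\ref{lem_ab}, normalizing $A=1$ to force $\Omega_{*,2}(\pi/2)=1$, and reading off $B<1$ from the extra factor $\cot^2\varphi$ --- is exactly the paper's. The divergence is in how item~1 (and, implicitly, the strict positivity required by {\bf (Hx)}) is verified.

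The paper does \emph{not} take a generic bump for $\tilde{\Omega}_{*,2}$. It writes $\tilde{\Omega}_{*,2}(\bxi)=\int_{S^2}\delta(\bxi\cdot\bby)\psi(\bby)\rd\bby$ with $\psi$ a nonnegative axisymmetric mollifier supported near the \emph{pole}, and then applies Lemma~\ref{lem_FTpsi} in reverse to obtain the closed formula
\[
\Omega_{*,2}(\bbx)=c_{2-s,\textnormal{2D}}\int_{S^2}(1-|\bby\cdot\bbx|^2)^{-s/2}\psi(\bby)\rd\bby,
\]
valid for the entire range $0<s<2$ at once. This displays $\Omega_{*,2}$ as a positive mollification of the single kernel $(1-|\bbx\cdot e_3|^2)^{-s/2}=(\sin\theta)^{-s}$, so strict positivity is immediate and the unique minimum at $\theta=\pi/2$ follows from the strict convexity of $(\sin\theta)^{-s}$ there together with a straightforward compactness argument away from the equator. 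No separate treatment of $s<1$ versus $s\ge 1$ is needed.

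Your route via \eqref{lem_FT_4} is fine for $0<s<1$: the inversion kernel $\tau_s|\bbx\cdot\bxi|^{-s}$ is positive, so the limit $\Omega_{*,2}^\epsilon\to C(\sin\theta)^{-s}$ is a genuine convergence of positive integrals and the perturbation argument can be made rigorous. The gap is at $1\le s<2$. There $\tau_s\le 0$, and the Appendix~\ref{app_FT} formula (applied in reverse) is a \emph{regularized} integral with a subtracted average and a sign-changing prefactor; neither strict positivity of $\Omega_{*,2}$ nor the global strict inequality $\Omega_{*,2}(\theta)>\Omega_{*,2}(\pi/2)$ drops out of ``leading behavior $(\sin\theta)^{-s}$'' without real work --- you would have to control both the singular part and the correction term uniformly in $\epsilon$ and $\theta$, which you have not done. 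Adopting the paper's specific form for $\tilde{\Omega}_{*,2}$ removes this difficulty entirely.
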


\begin{proof}

We construct $\Omega_{*,2}$ by
\begin{equation}\label{tomegast2}
    \tilde{\Omega}_{*,2}(\bxi) = \int_{S^2}\delta(\bxi\cdot\bby)\psi(\bby)\rd{\bby}
\end{equation}
where $\psi$ is a smooth nonnegative axisymmetric even function on $S^2$, supported near $\varphi=0$. Such $\tilde{\Omega}_{*,2}$ is supported near $\varphi=\pi/2$. Then, for any $0<s<2$, one can apply Lemma \ref{lem_FTpsi} reversely and recover $\Omega_{*,2}$ as
\begin{equation}\label{omegast2_1}
    \Omega_{*,2}(\bbx) = c_{2-s,\textnormal{2D}}\int_{S^2}(1-|\bby\cdot\bbx|^2)^{-s/2}\psi(\bby)\rd{\bby}\,.
\end{equation}

We may apply Lemma \ref{lem_ab} with $a=R_2/R_3$ and $b=0$ (for which $\rho_{a,b}=\rho_{\textnormal{2D}}$) since $\tilde{\Omega}_{*,2}$ vanishes near $\varphi=0$ and $B$ is clearly finite. The condition $\Omega_{*,2}(\pi/2)=1$ is equivalent to $\rho_{\textnormal{2D}}$ being a steady state for the potential $|\bx|^{-s}\Omega_{*,2}(\bbx)+|\bx|^2$, i.e., $A=1$. This gives the requirement
\begin{equation}\label{omegast2}
    1=  \pi\tau_{s}(R_1/R_2)^{2+s}\int_0^\pi\sin^{-s+1}\varphi\,\tilde{\Omega}_{*,2}(\varphi) \rd{\varphi} \,.
\end{equation}
Then $(|\bx|^{-s}\Omega_{*,2}(\bbx)+|\bx|^2)*\rho_{\textnormal{2D}}$ achieves minimum on $\supp\rho_{\textnormal{2D}}$ as long as
\begin{equation}
    B = 2\pi\tau_s(R_1/R_2)^{2+s}\int_0^\pi\cot^2\varphi \sin^{-s+1}\varphi\,\tilde{\Omega}_{*,2}(\varphi) \rd{\varphi} < 1\,,
\end{equation}
similarly to the previous lemma.
Compared with the last integral in \eqref{omegast2}, there is an extra factor $\cot^2\varphi$ in the definition of $B$, which is close to zero near $\varphi=\pi/2$. Therefore, by taking $\sin^{-s+1}\varphi\,\tilde{\Omega}_{*,2}(\varphi)$ as a mollifier with sufficiently small support near $\varphi=\pi/2$ satisfying \eqref{omegast1} (which is possible in the form \eqref{tomegast2}), we can guarantee $B<1$. 

By choosing $\tilde{\Omega}_{*,2}$ more carefully, one can guarantee that $\Omega_{*,2}$ achieves its minimal value only at $\theta=\pi/2$. In fact, the integral kernel 
$$
(1-|\bbx\cdot(0,0,1)|^2)^{-s/2}=(\sin\theta)^{-s/2}
$$ 
in \eqref{omegast2_1} achieves its unique minimum at $\theta=\pi/2$, with the strict convexity $(\partial_{\theta\theta}(\sin\theta)^{-s/2})(\pi/2)>0$. If $\psi$ is sufficiently concentrated near $\varphi=0$, the function $\Omega_{*,2}$ given by \eqref{omegast2_1}, as a slightly mollified version of $(1-|\bbx\cdot(0,0,1)|^2)^{-s/2}$, also have positive second $\theta$-derivative near $\theta=\pi/2$. Thus it achieves local minimum at $\theta=\pi/2$ in an interval $\theta\in [\pi/2-\epsilon,\pi/2+\epsilon]$ with  $\epsilon$ small. Furthermore, since the integral kernel $(\sin\theta)^{-s/2}$ has strictly larger values on the complement of $[\pi/2-\epsilon,\pi/2+\epsilon]$ compared to its value at $\theta=\pi/2$, the same is true for its mollification. Therefore $\Omega_{*,2}$ achieves its minimal value only at $\theta=\pi/2$.
\end{proof}

\section{Expansion of energy minimizers for large $\alpha$}\label{sec_expand}

In this section we consider the cases where the potential is too singular to allow the energy minimizer to collapse to lower dimension distributions. In particular, we are interested in the energy minimizers for $W_\alpha$ in \eqref{Walpha} in the case $1\le s < 2$, $\omega$ satisfying {\bf (hx)} and achieving minimum at $\theta=0$. In this case, the minimizers are not allowed to concentrate on the preferred $x_3$-direction. As the parameter $\alpha$ in \eqref{Walpha} gets large, we will analyze the expansion of the energy minimizer as $\alpha$ increases.

\subsection{Existence of LIC/non-LIC potentials}

We first explore the possibility of the signs of $\tilde{\omega}$ for $1\le s<2$ and $\omega$ satisfying {\bf (hx)}, achieving minimum at $\theta=0$.

\begin{theorem}\label{thm_s12exist}
Assume $1< s < 2$, and fix $\theta_0\in (0,\pi/2)$. 
\begin{itemize}
    \item There exists $\omega_1$ satisfying {\bf (hx)}, achieving minimal value $\omega_1(\theta)=0$ on $\theta\in [0,\pi/2-\theta_0]$, and the angle function for its Fourier transform $\tilde{\omega}_1$ is strictly positive. \item There exists $\omega_2$ satisfying {\bf (hx)}, achieving minimal value $\omega_2(\theta)=0$ on $\theta\in [0,\pi/2-\theta_0]$, and $\tilde{\omega}_2$ is sign-changing.
\end{itemize}
If $s=1$, then the same are true with `strictly positive' in item 1 replaced by `nonnegative' (item 1 would be false without this replacement).
\end{theorem}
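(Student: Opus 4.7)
The plan is to build both $\omega_1$ and $\omega_2$ from the concentrated family $\Omega^\epsilon$ of Lemma \ref{lem_FTpsi2}, which is supported in a strip of half-width $\epsilon$ about the equator with peak $\sim \epsilon^{-1}$, while its Fourier angle function $\tilde\Omega^\epsilon$ is concentrated near the poles with peak $\sim \epsilon^{s-3}$. The mismatch of these two peak exponents for $s<2$ drives both constructions.

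\textbf{Item 1.} Pick $\epsilon<\theta_0$ and set $\omega_1 := \Omega^\epsilon$. By Lemma \ref{lem_FTpsi2}, $\omega_1$ satisfies \textbf{(hx)}, is supported in $[\pi/2-\epsilon,\pi/2+\epsilon]\subset [\pi/2-\theta_0,\pi/2+\theta_0]$, and hence vanishes on $[0,\pi/2-\theta_0]$. For $1<s<2$, the explicit formula \eqref{tomega_psi} of Lemma \ref{lem_FTpsi},
\begin{equation*}
    \tilde\omega_1(\bxi) = c_{s-1,\textnormal{2D}}\int_{S^2}(1-|\bby\cdot\bxi|^2)^{(s-3)/2}\psi_\epsilon(\bby)\rd{\bby},
\end{equation*}
has $c_{s-1,\textnormal{2D}}>0$ and an integrand which is strictly positive on $\{\bby : |\bby\cdot\bxi|<1\}$, a set on which $\psi_\epsilon$ is positive on positive measure; thus $\tilde\omega_1>0$. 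For $s=1$, the duality \eqref{lem_FT_4b} combined with the explicit form \eqref{omega_psi} of $\omega_1$ and the injectivity of the Funk-Radon transform on even smooth axisymmetric functions forces $\tilde\omega_1(\bxi)=\psi_\epsilon(\bxi)+\psi_\epsilon(-\bxi)\ge 0$. Strict positivity is impossible here: $\omega_1(\textnormal{north pole})=0$ together with $\omega=\pi[\tilde\omega]$ forces the average of $\tilde\omega_1$ over the equator to vanish, and by axisymmetry $\tilde\omega_1$ vanishes on the whole equator, which is why the parenthetical relaxation in item 1 is needed.

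\textbf{Item 2.} Fix $1\le s<2$, $\epsilon_1<\theta_0$, and $\epsilon_2$ with $\epsilon_2/\epsilon_1\ll 1$ to be chosen. Define
\begin{equation*}
    \omega_2 := A\,\Omega^{\epsilon_1} - B\,\Omega^{\epsilon_2}
\end{equation*}
with $A,B>0$. The lower bound $\Omega^{\epsilon_1}\gtrsim\epsilon_1^{-1}$ on $[\pi/2-\epsilon_1/2,\pi/2+\epsilon_1/2]$ (which contains $\supp\Omega^{\epsilon_2}$) and the upper bound $\Omega^{\epsilon_2}\lesssim\epsilon_2^{-1}$, both from Lemma \ref{lem_FTpsi2} item 2, yield $\omega_2\ge 0$ provided $B/A\le c_1\,\epsilon_2/\epsilon_1$. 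The peak scaling $\tilde\Omega^{\epsilon_i}(0)\sim\epsilon_i^{s-3}$ of Lemma \ref{lem_FTpsi2} item 3 (extended to $s=1$ via the identification $\tilde\Omega^\epsilon|_{s=1}\propto\psi_\epsilon$ of the previous paragraph, which yields the same $\sim\epsilon^{-2}$ behavior) gives $\tilde\omega_2(0)<0$ precisely when $B/A>c_2(\epsilon_2/\epsilon_1)^{3-s}$. Since $3-s>1$ for $s<2$, the window $c_2(\epsilon_2/\epsilon_1)^{3-s}<B/A\le c_1\,\epsilon_2/\epsilon_1$ is nonempty once $\epsilon_2/\epsilon_1$ is small enough. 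For $B/A$ in this window, $\omega_2$ satisfies all the structural requirements, $\tilde\omega_2(0)<0$, and choosing $\varphi_0\in(\epsilon_2,\epsilon_1)$ one has $\tilde\Omega^{\epsilon_2}(\varphi_0)$ negligible by Lemma \ref{lem_FTpsi2} items 4--5 while $\tilde\Omega^{\epsilon_1}(\varphi_0)$ remains of order $\epsilon_1^{s-3}$, so $\tilde\omega_2(\varphi_0)>0$. Hence $\tilde\omega_2$ is sign-changing.

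\textbf{Main obstacle.} The most delicate point is the scaling analysis of $\tilde\Omega^\epsilon$ at $s=1$, where formula \eqref{tomega_psi} degenerates into the indeterminate form $0\cdot\infty$ ($c_{s-1,\textnormal{2D}}\to 0$ while the kernel $(1-|\bby\cdot\bxi|^2)^{-1}$ fails to be integrable). This is resolved by the clean identification $\tilde\Omega^\epsilon|_{s=1}(\bxi) = \psi_\epsilon(\bxi)+\psi_\epsilon(-\bxi)$ via the Funk-Radon duality \eqref{lem_FT_4b}, which recovers the same peak scaling $\sim\epsilon^{-2}$ consistent with the analytic continuation predicted by Lemma \ref{lem_FT}, and thereby allows the Item 2 construction to run uniformly for $s\in[1,2)$.
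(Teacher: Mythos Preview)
Your approach is essentially identical to the paper's. For $1<s<2$ both set $\omega_1=\Omega^\epsilon$ from Lemma~\ref{lem_FTpsi2} and $\omega_2=\Omega^{\epsilon_1}-\kappa\,\Omega^{\epsilon_2}$ (your $\kappa=B/A$), with the competing conditions $\kappa\lesssim\epsilon_2/\epsilon_1$ (for $\omega_2\ge 0$) and $\kappa\gtrsim(\epsilon_2/\epsilon_1)^{3-s}$ (for $\tilde\omega_2(0)<0$) simultaneously satisfiable because $3-s>1$. For $s=1$ the paper works directly on the Fourier side, prescribing $\tilde\omega_1,\tilde\omega_2$ near the poles and reading off $\omega_i$ via \eqref{lem_FT_4b}; your alternative of identifying $\tilde\Omega^\epsilon\big|_{s=1}=\psi_\epsilon+\psi_\epsilon(-\,\cdot)$ through Funk--Radon injectivity is correct and has the merit of unifying the two regimes.

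There is one genuine slip in your Item~2 argument. You assert that for $\varphi_0\in(\epsilon_2,\epsilon_1)$ the quantity $\tilde\Omega^{\epsilon_2}(\varphi_0)$ is ``negligible by Lemma~\ref{lem_FTpsi2} items 4--5''. Those items only give a decrease by a bounded multiplicative factor away from $\varphi=0$; they do not give smallness. For $1<s<2$ the explicit formula \eqref{tomega_psi} in fact shows $\tilde\Omega^{\epsilon_2}(\varphi_0)\sim(\sin\varphi_0)^{s-3}$ once $\varphi_0\gg\epsilon_2$, which is \emph{comparable} to $\tilde\Omega^{\epsilon_1}(\varphi_0)$. (Your claim is correct only at $s=1$, where $\tilde\Omega^{\epsilon_2}$ is compactly supported.) The conclusion $\tilde\omega_2(\varphi_0)>0$ still holds, but the reason is that the coefficient $B/A\lesssim\epsilon_2/\epsilon_1$ is small, not that $\tilde\Omega^{\epsilon_2}(\varphi_0)$ is. Alternatively, since $\omega_2\ge 0$ and is not identically zero, the energy identity in the Remark immediately following the theorem rules out $\tilde\omega_2\le 0$ globally; the paper itself stops after establishing $\tilde\omega_2(0)<0$ and implicitly relies on this.
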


\begin{remark}
As a complement of this theorem, we notice that it is not possible to have $\omega$ satisfying {\bf (hx)}, not identically zero, with $\tilde{\omega}\le 0$. In fact, for such $\omega$, we have the Fourier formula for the interaction energy 
\begin{equation}
    \int_{\mathbb{R}^3}\int_{\mathbb{R}^3} |\bx-\by|^{-s}\omega(\overline{\bx-\by})\rho(\by)\rd{\by}\rho(\bx)\rd{\bx}=\int_{\mathbb{R}^3} |\xi|^{-3+s}\tilde{\omega}(\bxi)|\hat{\rho}(\xi)|^2\rd{\xi}
\end{equation}
at least for compactly supported smooth $\rho$. If $\omega\ge 0$ and not identically zero, then the LHS is positive provided that $\rho$ is nonnegative and compactly supported. This shows $\tilde{\omega}\le 0$ cannot hold because otherwise the RHS would be nonpositive.
\end{remark}

\begin{proof}

The case $s=1$ can be easily treated by using \eqref{lem_FT_4b}. In fact, for $\omega_1$, we take $\tilde{\omega}_1$ as a nonnegative smooth function with $\tilde{\omega}_1(\bxi)=\tilde{\omega}_1(-\bxi)$ and supported on $\varphi\in [0,\theta_0]\cup[\pi-\theta_0,\pi]$. For $\omega_2$, we define $\tilde{\omega}_2$ by taking $\tilde{\omega}_1$ and making it slightly negative for $\varphi\in [0,\theta_0/2]\cup [\pi-\theta_0/2,\pi]$. From \eqref{lem_FT_4b}, it is clear that this modification guarantees that $\omega_2$ is nonnegative.

In the rest of the proof, we assume $1<s<2$.

One can take $\omega_1$ as the $\Omega^\epsilon$ in Lemma \ref{lem_FTpsi2} with $\epsilon\le \theta_0$, and all the desired properties are consequences of Lemma \ref{lem_FTpsi2} and the explicit expression \eqref{tomega_psi}.

To get the desired $\omega_2$ with $\tilde{\omega}_2$ sign-changing, we define
\begin{equation}
    \omega_2(\bbx)=\Omega^{\epsilon_1}-\kappa\Omega^{\epsilon_2} = \int_{S^2}\delta(\bbx\cdot\bby)(\psi_{\epsilon_1}(\bby)-\kappa\psi_{\epsilon_2}(\bby))\rd{\bby}\,,
\end{equation}
where $\psi_\epsilon$ is as defined in Lemma \ref{lem_FTpsi2}, and the parameters $0<2\epsilon_2\le \epsilon_1\le \theta_0$ and $\kappa>0$ to be chosen. Item 1 of Lemma \ref{lem_FTpsi2} shows that $\omega_2(\theta)=0$ on $\theta\in [0,\pi/2-\theta_0]$. Item 2 of Lemma \ref{lem_FTpsi2}, together with the condition $2\epsilon_2\le \epsilon_1$, shows that $\omega_2\ge 0$ as long as
\begin{equation}\label{kappacond1}
    \epsilon_1^{-1} \ge C\kappa \epsilon_2^{-1}\,.
\end{equation}
Item 3 of Lemma \ref{lem_FTpsi2} shows that $\tilde{\omega}_2(0)<0$ as long as
\begin{equation}\label{kappacond2}
    \epsilon_1^{-1} < c\kappa^{1/(3-s)} \epsilon_2^{-1}\,.
\end{equation}
Conditions \eqref{kappacond1} and \eqref{kappacond2} can be simultaneously satisfied as long as $c\kappa^{1/(3-s)} > C\kappa$ (where $C$ and $c$ are as in these conditions). This is clearly possible by choosing $\kappa$ sufficiently small, since $1<s<2$. We may choose it such that $C\kappa \le 1/2$, and then choosing $\epsilon_1=\theta_0$, $\epsilon_2 = \theta_0 C\kappa$ will satisfy \eqref{kappacond1}, \eqref{kappacond2} and $0<2\epsilon_2\le \epsilon_1\le \theta_0$.

\end{proof}

For the purpose of a later application, we also give an existence result which is a variant of $\omega_1$ in Theorem \ref{thm_s12exist}.
\begin{lemma}\label{lem_s12exist2}
Assume $1\le s < 2$, and fix $\varphi_0\in (0,\pi/2)$. There exists $\omega_3$ satisfying {\bf (hx)}, achieving minimal value only at $\omega_3(0)=0$, and $\tilde{\omega}_3$ is nonnegative and supported inside $\varphi\in [\pi/2-\varphi_0,\pi/2+\varphi_0]$ with $\tilde{\omega}_3(\pi/2)>0$.
\end{lemma}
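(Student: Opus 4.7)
The plan is to build $\omega_3$ as a scale-balanced difference $\omega_3 = \omega_3^{(1)} - \kappa\,\omega_3^{(2)}$ of two kernel-type functions at different widths, designed so that the two summands cancel exactly at the pole while the Fourier support remains concentrated near the equator. Fix $0 < \epsilon_2 < \epsilon_1 \le \varphi_0$ and let $\psi_{\epsilon_j}$ be the axisymmetric pole-concentrated mollifier from Lemma \ref{lem_FTpsi2} at scale $\epsilon_j$. Applying Lemma \ref{lem_FTpsi} with parameter $\tilde s = 3 - s \in (1, 2)$ together with the Fourier self-duality $s \leftrightarrow 3-s$, define
\begin{equation}
    \omega_3^{(j)}(\bbx) = c_{2-s,\textnormal{2D}}\int_{S^2}(1-|\bby\cdot\bbx|^2)^{-s/2}\psi_{\epsilon_j}(\bby)\rd{\bby}, \qquad \tilde\omega_3^{(j)}(\bxi) = \int_{S^2}\delta(\bxi\cdot\bby)\psi_{\epsilon_j}(\bby)\rd{\bby},
\end{equation}
which form the Fourier-angle pair for $|\bx|^{-s}\omega_3^{(j)}(\bbx)$. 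Since $\tilde\omega_3^{(j)}$ coincides with the function $\Omega^{\epsilon_j}$ of Lemma \ref{lem_FTpsi2} viewed in the Fourier variable, items 1--2 of that lemma give that $\tilde\omega_3^{(j)} \ge 0$ is axisymmetric, supported in $[\pi/2-\epsilon_j, \pi/2+\epsilon_j]$, and peaks at $\varphi = \pi/2$ with value $\sim \epsilon_j^{-1}$; a direct Funk-transform calculation further yields the self-similar profile $\tilde\omega_3^{(j)}(\varphi) \sim \epsilon_j^{-1}\tilde F((\varphi-\pi/2)/\epsilon_j)$ for a smooth strictly decreasing $\tilde F$. Choose $\kappa := \omega_3^{(1)}(0)/\omega_3^{(2)}(0)$ so that $\omega_3(0) = 0$ by construction.

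For the Fourier-side verification, a singularity analysis at $\bbx = (0,0,1)$ using $(1-|\bby\cdot\bbx|^2)^{-s/2}|_{\bbx=(0,0,1)} = \sin^{-s}\theta_y$ and the substitution $u = \theta_y/\epsilon_j$ gives $\omega_3^{(j)}(0) \sim \epsilon_j^{-s}$ and hence $\kappa \sim (\epsilon_2/\epsilon_1)^s$. Outside $\supp\tilde\omega_3^{(2)}$ we have $\tilde\omega_3 = \tilde\omega_3^{(1)} \ge 0$; inside, the self-similar scaling reduces the inequality $\tilde\omega_3^{(1)} \ge \kappa\,\tilde\omega_3^{(2)}$ to the pointwise bound $\tilde F(\delta\,\epsilon_2/\epsilon_1) \ge (\epsilon_2/\epsilon_1)^{s-1}\tilde F(\delta)$, which holds strictly because $(\epsilon_2/\epsilon_1)^{s-1} < 1$ for $s > 1$ and $\tilde F$ is decreasing. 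This yields $\tilde\omega_3(\pi/2) > 0$ strictly, while $\supp\tilde\omega_3 \subset [\pi/2-\varphi_0, \pi/2+\varphi_0]$ follows from $\epsilon_1 \le \varphi_0$.

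It remains to check that $\omega_3 > 0$ on $\theta \in (0, \pi/2]$ with a nondegenerate minimum at $\theta = 0$. Expanding $1 - |\bby\cdot\bbx|^2 \approx \theta^2 + \theta_y^2 - 2\theta\theta_y\cos\mu_y$ near the pole produces an analogous self-similar scaling $\omega_3^{(j)}(\bbx) \sim \epsilon_j^{-s}F(\theta/\epsilon_j)$ for a smooth strictly decreasing $F$, so $\omega_3(\bbx) \sim \epsilon_1^{-s}[F(\theta/\epsilon_1) - F(\theta/\epsilon_2)] > 0$ for $\theta > 0$ small, and $\partial_\theta^2\omega_3(0) \sim F''(0)\,\epsilon_1^{-s}(\epsilon_1^{-2} - \epsilon_2^{-2}) > 0$ since both factors are negative. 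For $\theta$ bounded away from the pole, the point-mass limit $\omega_3^{(j)}(\bbx) \to c(\sin\theta)^{-s}\int\psi_{\epsilon_j}\rd{\bby}$, with $\int\psi_{\epsilon_j}\rd{\bby} = O(1)$ independent of $j$, together with $\kappa \ll 1$ gives $\omega_3 \ge (1-\kappa)\,\omega_3^{(1)} + o(1) > 0$ uniformly. The main obstacle will be patching these near-pole and far-pole asymptotics uniformly across the intermediate regime $\theta \sim \epsilon_j$, which I expect to handle via a careful matched-asymptotics decomposition using the quantitative estimates. The argument genuinely requires $s > 1$; the boundary case $s = 1$ is incompatible with $\tilde\omega_3(\pi/2) > 0$ by the Funk-type identity \eqref{lem_FT_4b}, namely $\omega_3(0) = \pi\tilde\omega_3(\pi/2)$, so that case needs a separately constructed $\omega_3$ with only a weaker boundary positivity.
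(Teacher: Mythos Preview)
Your construction is the same as the paper's: define $\tilde\omega_3$ as the Funk transform of $\psi_{\epsilon_1}-\kappa\psi_{\epsilon_2}$ (so $\tilde\omega_3^{(j)}=\Omega^{\epsilon_j}$ in the notation of Lemma~\ref{lem_FTpsi2}), recover $\omega_3$ via Lemma~\ref{lem_FTpsi} applied in reverse, and tune $\kappa$ so that $\omega_3(0)=0$. Your identification of the $s=1$ obstruction via \eqref{lem_FT_4b} is also correct and matches the paper; the paper handles the Fourier-side nonnegativity more directly through item~2 of Lemma~\ref{lem_FTpsi2} rather than a self-similar profile for $\tilde F$.

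The gap you flag in the intermediate regime is real, and the paper's resolution is structurally different from the matched-asymptotics route you envisage. The paper does \emph{not} send both $\epsilon_1,\epsilon_2$ to zero and appeal to a common profile; it fixes $\epsilon_1=\varphi_0$ once and for all, so that $\omega_{3,1}$ becomes a single smooth positive function with a definite minimum $\min\omega_{3,1}>0$ and a fixed quadratic Taylor bound $\omega_{3,1}(\theta)\ge\omega_{3,1}(0)(1-C_{3,1}\theta^2)$, and only sends $\epsilon_2\to 0$. The three zones are then covered by items~4 and~5 of Lemma~\ref{lem_FTpsi2} applied to $\omega_{3,2}$ (these items, though stated for the kernel exponent $(-3+s)/2$, hold verbatim for $-s/2$ since their proofs only use the sign and monotonicity of the kernel): item~5 gives the sharp drop $\omega_{3,2}(\theta)\le(1-c_{\psi,1}\theta^2/\epsilon_2^2)\omega_{3,2}(0)$ on $[0,c_{\psi,2}\epsilon_2]$, which beats $C_{3,1}\theta^2$ once $\epsilon_2$ is small; item~4 (global monotonicity of $\omega_{3,2}$) freezes that gap across $(c_{\psi,2}\epsilon_2,\theta_1]$ for a fixed small $\theta_1$; and on $[\theta_1,\pi/2]$, since $\kappa\sim\epsilon_2^{\,s}\to 0$, one has $\omega_{3,2}\to 0$ uniformly below $\min\omega_{3,1}$. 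Your profile ansatz $\omega_3^{(j)}(\theta)\sim\epsilon_j^{-s}F(\theta/\epsilon_j)$ is fragile here because the self-similarity is only asymptotic while the exact cancellation $\omega_3(0)=0$ forces the argument down to the level of the curvature corrections; fixing one scale sidesteps this entirely.
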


\begin{proof}
For $s=1$, we have the formula \eqref{lem_FT_4b}. The desired conditions for $\omega_3$ are clearly satisfied if we take $\tilde{\omega}_3$ to be a nonnegative smooth axisymmetric function with $\tilde{\omega}_3(\bxi)=\tilde{\omega}_3(-\bxi)$, supported on $\varphi\in [\pi/2-\varphi_0,\pi/2+\varphi_0]$, with $\tilde\omega_3(\pi/2)=0$.

In the rest of this proof, we assume $1<s<2$. We define
\begin{equation}
    \tilde{\omega}_3(\bxi) = \int_{S^2}\delta(\bxi\cdot\bby)(\psi_{\epsilon_1}(\bby)-\kappa\psi_{\epsilon_2}(\bby))\rd{\bby}
\end{equation}
where $\psi_\epsilon$ is as defined in Lemma \ref{lem_FTpsi2} so that items 4 and 5 of the lemma are satisfied, and the parameters $0<2\epsilon_2\le \epsilon_1\le \varphi_0$ and $\kappa>0$ to be chosen. Notice that this strategy is similar to the one in Theorem \ref{thm_s12exist} but in Fourier space.
Then we apply Lemma \ref{lem_FTpsi} reversely and obtain
\begin{equation}
    \omega_3(\bbx) = c_{2-s,2D}\int_{S^2}(1-|\bbx\cdot\bby|^2)^{-s/2}(\psi_{\epsilon_1}(\bby)-\kappa\psi_{\epsilon_2}(\bby))\rd{\bby}
\end{equation}
Item 1 of Lemma \ref{lem_FTpsi2} shows that $\tilde{\omega}_3(\varphi)=0$ on $\varphi\in [0,\pi/2-\varphi_0]$. Item 2 of Lemma \ref{lem_FTpsi2}, together with the condition $2\epsilon_2\le \epsilon_1$, shows that $\tilde{\omega}_3\ge 0$ with $\tilde{\omega}_3(\pi/2)>0$ as long as
\begin{equation}\label{kappacond1_new}
    \epsilon_1^{-1} \ge C\kappa \epsilon_2^{-1}\,.
\end{equation}
Item 3 of Lemma \ref{lem_FTpsi2} shows that $\omega_3(0)=0$ as long as
\begin{equation}\label{kappacond2_new}
    \epsilon_1^{-s} = c(\epsilon_1,\epsilon_2)\kappa \epsilon_2^{-s}
\end{equation}
where $c(\epsilon_1,\epsilon_2)$ is a positive constant depending on $\epsilon_1$ and $\epsilon_2$, being uniformly bounded from above and below. Choosing $\kappa$ according to \eqref{kappacond2_new}, we see that \eqref{kappacond1_new} reduces to
\begin{equation}
    \epsilon_1^{s-1} \ge \frac{C}{c(\epsilon_1,\epsilon_2)} \epsilon_2^{s-1}
\end{equation}
We fix the choice $\epsilon_1 = \varphi_0$, and then this condition, together with $2\epsilon_2\le \epsilon_1$, are automatically satisfied as long as $\epsilon_2$ is sufficiently small.

Then we claim that $\omega_3$ achieves minimal value only at $\omega_3(0)=0$ if  $\epsilon_2$ is sufficiently small. To see this, we denote the two parts of $\omega_3$ as $\omega_3=\omega_{3,1}-\omega_{3,2}$ with
$$
\omega_{3,1}(\bbx) = c_{2-s,2D}\int_{S^2}(1-|\bbx\cdot\bby|^2)^{-s/2}\psi_{\epsilon_1}(\bby)\rd{\bby}
$$
and
$$
    \omega_{3,2}(\bbx) = c_{2-s,2D}\int_{S^2}(1-|\bbx\cdot\bby|^2)^{-s/2}\kappa\psi_{\epsilon_2}(\bby)\rd{\bby}.
$$
Here $\omega_{3,1}$ is a fixed smooth positive axisymmetric function, and thus
\begin{equation}
    \min \omega_{3,1} >0,\quad \omega_{3,1}(\theta)\ge \omega_{3,1}(0)(1-C_{3,1}\theta^2),\quad\forall\theta\in [0,\pi/2]
\end{equation}
for some $C_{3,1}>0$. 

We now make use of Lemma \ref{lem_FTpsi2} to compare $\omega_{3,1}$ to $\omega_{3,2}$ by varying $\epsilon_2$. First, by requiring $\epsilon_2$ small so that 
\begin{equation}
    c_{\psi,1}/\epsilon_2^2 > C_{3,1},
\end{equation}
we may apply item 5 of Lemma \ref{lem_FTpsi2} to $\omega_{3,2}$ and conclude that $\omega_{3,1}(\theta)>\omega_{3,2}(\theta)$ for any $0<\theta\le c_{\psi,2}\epsilon_2$ due to $\omega_{3,1}(0)=\omega_{3,2}(0)$.

We then choose $\theta_1>0$ small enough (independent of $\epsilon_2$) so that 
\begin{equation}
    C_{3,1}\theta_1^2 < c_{\psi,1}c_{\psi,2}^2\,.
\end{equation}
This guarantees that $\omega_{3,1}(\theta)>\omega_{3,2}(\theta)$ for any $c_{\psi,2}\epsilon_2<\theta\le \theta_1$ because
\begin{align*}
    \omega_{3,1} (\theta) \ge & \, \omega_{3,1}(0)(1-C_{3,1}\theta^2) \ge \omega_{3,1}(0)(1-C_{3,1}\theta_1^2) = \omega_{3,2}(0)(1-C_{3,1}\theta_1^2) \\
    > & \,\omega_{3,2}(0)(1-c_{\psi,1}c_{\psi,2}^2) = \omega_{3,2}(0)\Big(1-c_{\psi,1}\frac{(c_{\psi,2}\epsilon)^2}{\epsilon^2}\Big) \\
    \ge &\, \omega_{3,2}(c_{\psi,2}\epsilon) \ge \omega_{3,2}(\theta)\,,
\end{align*}
where items 5 and 4 of Lemma \ref{lem_FTpsi2} are used in the last two inequalities respectively.

If we further decrease $\epsilon_2$, \eqref{kappacond2_new} shows that $\kappa$ scales like $\epsilon_2^s$ as $\epsilon_2\rightarrow 0$. Therefore $\omega_{3,2}(\theta)$ converges to zero uniformly on $\theta\in [\theta_1,\pi/2]$ as $\epsilon_2\rightarrow 0$. This means if $\epsilon_2$ is sufficiently small, we have $\omega_{3,2}(\theta) < \min \omega_{3,1}$ on $\theta\in [\theta_1,\pi/2]$. Thus we conclude that $\omega_{3,2}<\omega_{3,1}$ for any $\theta\in (0,\pi/2]$, i.e., $\omega_3$ achieves its minimum only at $\omega_3(0)=0$.

\end{proof}

\begin{remark}
As a byproduct, this lemma shows that item 1 in Lemma \ref{lem_omegast2} is not a consequence of items 2 and 3 therein. In fact, the function $\omega_3$ in Lemma \ref{lem_s12exist2} achieves its minimum at $\omega_3(0)=0$ and satisfies $\omega_3(\pi/2)>0$ and $\tilde{\omega}_3\ge 0$. By taking a constant multiple, we may assume $\omega_3(\pi/2)=1$, and thus we have $A=1$ as in the proof of Lemma \ref{lem_omegast2}. Also we have $B<1$ because Lemma \ref{lem_s12exist2} guarantees that $\tilde{\omega}_3$ is concentrated near $\pi/2$. Therefore $\omega_3$ satisfies items 2 and 3 of Lemma \ref{lem_omegast2} but not item 1. This is in contrast with Lemma \ref{lem_omegast1}, in which item 1 is a consequence of items 2 and 3.
\end{remark}

\subsection{The LIC case}

Then we analyze the expansion phenomenon in case $\Omega_\alpha$ is always LIC for any $\alpha\ge 0$, i.e., the case $\tilde{\omega}\ge 0$. In item 1 of Theorem \ref{thm_s12exist}, we have seen that for $1\le s<2$ there exists such $\omega$ satisfying {\bf (hx)} with its minimum achieved at $\theta=0$, and strict positivity of $\tilde{\omega}$ is possible if $1<s<2$. 

We will first treat the case when $\tilde{\omega}$ is strictly positive. The following theorem describes how the ellipsoid-shaped energy minimizer expands as $\alpha$ gets large for a wider range of $s$.

\begin{theorem}\label{thm_abalpha1}
Assume $0< s < 3$, $\omega$ satisfies {\bf (hx)} (with the requirement $\min\omega=0$ relaxed to $\min\omega\ge 0$), and $\tilde{\omega}\ge c > 0$ ($\tilde{\omega}$ as given in Lemma \ref{lem_FT_1}). Then the parameters $a_\alpha,b_\alpha$ as given in Theorem \ref{thm_ell} for $\Omega_\alpha$, scale like $\alpha^{1/(2+s)}$ as $\alpha\rightarrow\infty$, i.e., $\lim_{\alpha\rightarrow\infty} a_\alpha \alpha^{-1/(2+s)}$ and $\lim_{\alpha\rightarrow\infty} b_\alpha \alpha^{-1/(2+s)}$ are positive numbers.
\end{theorem}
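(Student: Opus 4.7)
The plan is to non-dimensionalize the system $A(a_\alpha,b_\alpha)=B(a_\alpha,b_\alpha)=1$ coming from Lemma \ref{lem_ab2} and Theorem \ref{thm_ell}, where the relevant weight is $\tilde{\Omega}_\alpha = c_s + \alpha\tilde{\omega}$. Substituting $a=\alpha^{1/(2+s)}\hat{a}$, $b=\alpha^{1/(2+s)}\hat{b}$ and dividing by $\alpha$ absorbs the $\alpha$ in the weight, converting \eqref{lem_ab_2} into $\hat{A}_\alpha(\hat{a}_\alpha,\hat{b}_\alpha)=\hat{B}_\alpha(\hat{a}_\alpha,\hat{b}_\alpha)=1$, where $\hat{A}_\alpha,\hat{B}_\alpha$ have the same form as $A,B$ but with weight $\tilde{\omega}+c_s/\alpha$. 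This weight decreases to $\tilde{\omega}$ as $\alpha\to\infty$, and $\tilde{\omega}$ is itself a smooth nonnegative axisymmetric function with $\tilde{\omega}\ge c>0$, producing a formal limit system $\hat{A}_\infty=\hat{B}_\infty=1$ of the type already handled by Theorem \ref{thm_ell}.

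First I would exploit homogeneity to reduce to the ratio variable $\lambda=\hat{b}/\hat{a}$. The condition $\hat{A}_\alpha = \hat{B}_\alpha$ is equivalent to $f_\alpha(\lambda)=1$ where
\[
f_\alpha(\lambda) = \frac{\int_0^\pi \sin^3\varphi\,(\sin^2\varphi+\lambda^2\cos^2\varphi)^{-(2+s)/2}(\tilde{\omega}(\varphi)+c_s/\alpha)\rd\varphi}{2\int_0^\pi \cos^2\varphi\sin\varphi\,(\sin^2\varphi+\lambda^2\cos^2\varphi)^{-(2+s)/2}(\tilde{\omega}(\varphi)+c_s/\alpha)\rd\varphi},
\]
and analogously $f_\infty(\lambda)$ with weight $\tilde{\omega}$. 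Since $c\le \tilde{\omega}+c_s/\alpha \le c_s+\|\tilde{\omega}\|_{L^\infty}$ uniformly for $\alpha\ge 1$, the ratio $f_\alpha$ is sandwiched between fixed positive multiples of the isotropic ratio $f^{\textnormal{iso}}$. Repeating the boundary analysis of Lemma \ref{lem_ab2}, $f^{\textnormal{iso}}(\lambda)\to 0$ as $\lambda\to 0^+$ (the denominator's singularity $\cos^2\varphi\sin^{-1-s}\varphi$ dominates the numerator's $\sin^{1-s}\varphi$ near $\varphi=0,\pi$) and $f^{\textnormal{iso}}(\lambda)\to \infty$ as $\lambda\to\infty$ (mass concentrates near $\varphi=\pi/2$ where the denominator is further suppressed by $\cos^2\varphi$), giving uniform-in-$\alpha$ containment of $\{\lambda_\alpha\}$ in a compact subinterval of $(0,\infty)$.

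For the limit $f_\infty(\lambda)=1$, I would apply Theorem \ref{thm_ell} and Remark \ref{rem_abuni} to the LIC potential whose Fourier angle function is $\tilde{\omega}$ itself (legitimate since $\tilde{\omega}\ge c>0$ is smooth and axisymmetric and satisfies {\bf (Hx)} in the Fourier sense), obtaining uniqueness of the associated ellipsoid and hence of $\lambda_\infty$. By dominated convergence $f_\alpha \to f_\infty$ uniformly on compact subintervals, so any subsequential limit of $\lambda_\alpha$ solves $f_\infty=1$ and must equal $\lambda_\infty$; therefore $\lambda_\alpha\to\lambda_\infty\in(0,\infty)$. Plugging back into $A(a_\alpha,\lambda_\alpha a_\alpha)=1$ and dividing by $\alpha$ yields
\[
\frac{a_\alpha^{2+s}}{\alpha} = \pi\tau_s(R_1/R_3)^{2+s}\int_0^\pi \sin^3\varphi\,(\sin^2\varphi+\lambda_\alpha^2\cos^2\varphi)^{-(2+s)/2}(\tilde{\omega}(\varphi)+c_s/\alpha)\rd\varphi,
\]
whose RHS tends to a strictly positive finite constant; hence $a_\alpha \alpha^{-1/(2+s)}$ converges to a positive limit, and so does $b_\alpha\alpha^{-1/(2+s)} = \lambda_\alpha\cdot a_\alpha\alpha^{-1/(2+s)}$.

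The main obstacle I anticipate is the uniform compactness of $\{\lambda_\alpha\}$: one needs to control $f_\alpha$ simultaneously from above and below in terms of $f^{\textnormal{iso}}$, then verify the boundary behavior $f^{\textnormal{iso}}(0^+)=0$, $f^{\textnormal{iso}}(\infty)=\infty$ across the whole range $0<s<3$. The delicate case is $2\le s<3$, where both numerator and denominator of $f^{\textnormal{iso}}$ diverge as $\lambda\to 0^+$, and one must compare divergence rates at $\varphi=0,\pi$ (the denominator's additional $\cos^2\varphi$-free singularity wins), exactly as in the corresponding case of Lemma \ref{lem_ab2}. Once this is in place, the rest is a soft argument combining homogeneity, uniqueness from Theorem \ref{thm_ell}, and dominated convergence.
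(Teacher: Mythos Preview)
Your approach mirrors the paper's: reduce via homogeneity to the ratio $\lambda=b/a$, study the equation $f_\alpha(\lambda)=1$, identify its limit as $\alpha\to\infty$, and then read off the scaling of $a_\alpha$ from the $A=1$ equation. The paper differs in one technical point: instead of appealing to Theorem~\ref{thm_ell} and Remark~\ref{rem_abuni} for the limit problem, it proves directly that $\partial_t f(t,\alpha)>0$ for every $\alpha\in(0,\infty]$, via the Cauchy--Schwarz inequality applied against the positive weight $(\sin^2\varphi+t^2\cos^2\varphi)^{-(4+s)/2}(c_s\alpha^{-1}+\tilde{\omega}(\varphi))\sin\varphi\,\rd\varphi$. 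This single computation yields both uniqueness of $t_\alpha$ for all $\alpha$ (including $\alpha=\infty$) and, together with the uniform convergence $f(\cdot,\alpha)\to f(\cdot,\infty)$ on compact sets, the convergence $t_\alpha\to t_\infty$.

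Your route to uniqueness of $\lambda_\infty$ has a genuine, if minor, gap. Theorem~\ref{thm_ell} and Remark~\ref{rem_abuni} require the physical angle function to satisfy {\bf (Hx)}, in particular strict positivity; but in the limit problem the angle function is $\omega$ itself, and the hypothesis only gives $\omega\ge 0$. For $1<s<3$ this can genuinely vanish while $\tilde{\omega}>0$ (e.g.\ the $\omega_1$ of Theorem~\ref{thm_s12exist} satisfies $\tilde{\omega}_1>0$ yet $\omega_1\equiv 0$ on an arc), so Lemma~\ref{lem_exist} and hence Remark~\ref{rem_abuni} do not literally apply. The cleanest patch is exactly the paper's monotonicity computation, which only uses $\tilde{\omega}\ge c>0$ and bypasses the minimizer machinery entirely. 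Once uniqueness of $\lambda_\infty$ is secured, your sandwiching argument for uniform compactness of $\{\lambda_\alpha\}$ and the final dominated-convergence step for $a_\alpha^{2+s}/\alpha$ are correct and equivalent to what the paper does.
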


\begin{remark}
In the case $0<s\le 1$, the condition $\min\omega=0$ in {\bf (hx)} contradicts $\tilde{\omega}\ge c > 0$, by \eqref{lem_FT_4} (for $0<s<1$) and \eqref{lem_FT_4b} (for $s=1$). In other words, for $\omega$ satisfying {\bf (hx)}, this theorem is only non-vacuous for $1<s<3$. 

We also notice that for $\omega$ satisfying {\bf (hx)}, $\tilde{\omega}\ge c > 0$ is automatically satisfied for $2<s<3$ due to \eqref{lem_FT_2}, and conditionally satisfied for $1<s\le 2$ due to Theorem \ref{thm_s12exist} (for $1<s<2$) and \eqref{lem_FT_2b} (for $s=2$).
\end{remark}

\begin{proof}
For any $\alpha\in(0,\infty)$, the parameters $a_\alpha,b_\alpha\in (0,\infty)$ are uniquely determined by \eqref{abR3_2} for $\Omega_\alpha=1+\alpha\omega$ with $A=B=1$, i.e.,
\begin{align}\label{abR3_3}
    & \pi\tau_s R_1^{2+s}\int_0^\pi \sin^3\varphi (a_\alpha^2\sin^2\varphi+b_\alpha^2\cos^2\varphi)^{-(2+s)/2}(c_s+\alpha\tilde{\omega}(\varphi))\rd{\varphi} = 1 \\
    & 2\pi\tau_s R_1^{2+s}\int_0^\pi \cos^2\varphi\sin\varphi (a_\alpha^2\sin^2\varphi+b_\alpha^2\cos^2\varphi)^{-(2+s)/2}(c_s+\alpha\tilde{\omega}(\varphi))\rd{\varphi} = 1\,. \nonumber
\end{align}
Similar to the proof of Lemma \ref{lem_ab2}, for $t\in (0,\infty)$ we define the function
\begin{equation}\label{ftalpha}
    f(t,\alpha)=\frac{\int_0^\pi \sin^3\varphi (\sin^2\varphi+t^2\cos^2\varphi)^{-(2+s)/2}(c_s\alpha^{-1}+\tilde{\omega}(\varphi))\rd{\varphi}}{2\int_0^\pi \cos^2\varphi\sin\varphi (\sin^2\varphi+t^2\cos^2\varphi)^{-(2+s)/2}(c_s\alpha^{-1}+\tilde{\omega}(\varphi))\rd{\varphi}},
\end{equation}
which satisfies the relation $f(b_\alpha/a_\alpha,\alpha)=1$ for any $\alpha\in(0,\infty)$. We define $f(t,\infty)$ for $t\in (0,\infty)$ by 
\begin{equation}\label{ftinfty}
    f(t,\infty)=\frac{\int_0^\pi \sin^3\varphi (\sin^2\varphi+t^2\cos^2\varphi)^{-(2+s)/2}\tilde{\omega}(\varphi)\rd{\varphi}}{2\int_0^\pi \cos^2\varphi\sin\varphi (\sin^2\varphi+t^2\cos^2\varphi)^{-(2+s)/2}\tilde{\omega}(\varphi)\rd{\varphi}},
\end{equation}
which is well-defined since $\tilde{\omega}\ge c > 0$. For any $\alpha\in (0,\infty]$, $f(t,\alpha)=1$ has a unique solution $t_\alpha\in (0,\infty)$ (which is $b_\alpha/a_\alpha$ for $\alpha\in (0,\infty)$). The existence of solution to $f(t,\alpha)=1$ was shown in Lemma \ref{lem_ab2} which works for $\alpha\in (0,\infty]$, and the uniqueness was given in Remark \ref{rem_abuni} which works for $\alpha\in (0,\infty)$. The uniqueness can also be obtained by analyzing the derivative of $f(t,\alpha)$, and this approach works for $\alpha\in (0,\infty]$. In fact, after tedious but simple computations one gets
\begin{equation}
    \partial_t f(t,\alpha) = -(2+s)t \frac{ \Big(\int\sin^2\varphi\cos^2\varphi \Big)^2 - \int\sin^4\varphi\cdot \int\cos^4\varphi}{2[\int \cos^2\varphi(\sin^2\varphi+t^2\cos^2\varphi)]^2}\,,
\end{equation}
where the integrals are with respect to the positive weight 
$$
(\sin^2\varphi+t^2\cos^2\varphi)^{-(2+s)/2-1}(c_s\alpha^{-1}+\tilde{\omega}(\varphi))\sin\varphi\rd{\varphi}.
$$
Notice that the numerator is negative by the Cauchy-Schwarz inequality. Therefore, we see $\partial_t f(t,\alpha)>0$ for any $t\in (0,\infty)$ and $\alpha\in (0,\infty]$.

Since the denominator in \eqref{ftalpha} is positive and away from zero, it is clear that
\begin{equation}
    \lim_{\alpha\rightarrow\infty}f(t,\alpha)=f(t,\infty)
\end{equation}
and the limit is uniform on compact sets for $t\in (0,\infty)$. Therefore, using $\partial_t f(t,\infty)>0$, we may apply the implicit function theorem to conclude that 
\begin{equation}
    \lim_{\alpha\rightarrow\infty}t_\alpha = t_\infty
\end{equation}
i.e., $b_\alpha/a_\alpha\rightarrow t_\infty$ as stated in the theorem.

To see the limiting behavior of $a_\alpha$ as $\alpha\rightarrow\infty$ (which also implies the same for $b_\alpha$), we recall the $A=1$ equation in \eqref{abR3_3} (writing $b_\alpha=a_\alpha t_\alpha$)
\begin{equation}\label{eqtalpha}\begin{split}
    & \pi\tau_s R_1^{2+s}\int_0^\pi \sin^3\varphi (\sin^2\varphi+t_\alpha^2\cos^2\varphi)^{-(2+s)/2}(c_s\alpha^{-1}+\tilde{\omega}(\varphi))\rd{\varphi} = \alpha^{-1}a_\alpha^{2+s} \\
\end{split}\end{equation}
Sending $\alpha\rightarrow\infty$, the LHS converges to a positive constant. Therefore so does the RHS.

\end{proof}

\begin{remark}
This result shows that $a_\alpha,b_\alpha$ scale (as $\alpha\rightarrow\infty$) in the same way as if $\omega=1$, i.e., an isotropic potential whose repulsive part has strength $\alpha$. One can easily show that this agreement is also true for the minimal total energy, i.e., scales like $\alpha^{2/(2+s)}$ both the stated $\omega$ and $\omega=1$. In particular, if one considers $1<s<2$ and $\omega$ as in item 1 of Theorem \ref{thm_s12exist}, the smallness of $\omega$ near $\theta=0$ does not affect the minimal energy in terms of its $\alpha$-scaling.
\end{remark}

\begin{remark}
A similar result can also be proved for 2D anisotropic potentials studied in \cite{CS22}.
\end{remark}

In the case $1\le s<2$ with $\tilde{\omega}$ nonnegative but not having a positive lower bound, it is possible that $b_\alpha\rightarrow 0$ as $\alpha\rightarrow\infty$, as shown in the following theorem.

\begin{theorem}\label{thm_abalpha2}
Assume $1\le s < 2$. Then there exists $\omega$ satisfying {\bf (hx)}, achieving minimum only at $\omega(0)=0$, with $\tilde{\omega}\ge 0$, such that the parameters $a_\alpha,b_\alpha$ as given in Theorem \ref{thm_ell} for $\Omega_\alpha$, satisfying $b_\alpha\rightarrow 0$ as $\alpha\rightarrow\infty$.
\end{theorem}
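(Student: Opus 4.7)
The strategy is to take $\omega := \omega_3$ from Lemma~\ref{lem_s12exist2}, with the parameter $\varphi_0$ therein chosen so small that $2\tan^2\varphi_0 < 1$. Then $\omega$ satisfies $\mathbf{(hx)}$, attains its minimum only at $\omega(0) = 0$, while $\tilde\omega$ is nonnegative, supported in $[\pi/2 - \varphi_0, \pi/2 + \varphi_0]$, with $\tilde\omega(\pi/2) > 0$. Since $\tilde\Omega_\alpha = c_s + \alpha\tilde\omega \ge c_s > 0$, Theorem~\ref{thm_ell} and Remark~\ref{rem_abuni} give a unique $(a_\alpha, b_\alpha) \in (0,\infty)^2$ solving $A = B = 1$ as in \eqref{abR3_3}. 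We argue $b_\alpha\to 0$ by contradiction: assume a subsequence $\alpha_k \to \infty$ satisfies $\liminf_k b_{\alpha_k} > 0$, and pass to further subsequences so that $(a_{\alpha_k}, b_{\alpha_k})$ converges in $[0,\infty]^2$.

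\textbf{The $A$-equation forces $a_{\alpha_k}\to\infty$ at rate $\alpha_k^{1/(2+s)}$.} On the support of $\tilde\omega$, $\sin\varphi \ge \cos\varphi_0 > 0$, so if $(a_{\alpha_k}, b_{\alpha_k})$ were bounded then the anisotropic integrand in the $A$-equation would be uniformly bounded below, giving $A \ge c\alpha_k \to \infty$, impossible. Hence $a_{\alpha_k}\to\infty$. Assuming further that $b_{\alpha_k}$ is bounded above, dominated convergence with envelope $a^{-(2+s)}\sin^{1-s}\varphi\,\tilde\omega$ applied to the $A$-equation yields $a_{\alpha_k}^{2+s}/\alpha_k \to \pi\tau_s R_1^{2+s}\int_0^\pi\sin^{1-s}\varphi\,\tilde\omega(\varphi)\rd\varphi > 0$. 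The same envelope applied to the $B_{\text{ani}}$-expression gives
\begin{equation*}
B_{\text{ani}}\to\frac{2\int_0^\pi\cos^2\varphi\sin^{-1-s}\varphi\,\tilde\omega(\varphi)\rd\varphi}{\int_0^\pi\sin^{1-s}\varphi\,\tilde\omega(\varphi)\rd\varphi}\le 2\tan^2\varphi_0<1,
\end{equation*}
the inequality using $\cot^2\varphi\le\tan^2\varphi_0$ on the support of $\tilde\omega$.

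\textbf{Contradiction when $b_{\alpha_k}$ is bounded.} Since $B=B_{\text{iso}}+B_{\text{ani}}=1$, $B_{\text{iso}}$ must converge to a value $\ge 1-2\tan^2\varphi_0>0$. But a direct evaluation of $B_{\text{iso}}$ near $\varphi=0,\pi$ (via the substitution $u = a^2\varphi^2 + b^2$) gives $B_{\text{iso}}\sim\frac{4\pi\tau_s R_1^{2+s}c_s}{s}\cdot\frac{b^{-s}}{a^2}$, which tends to $0$ since $a_{\alpha_k}\to\infty$ while $b_{\alpha_k}$ is bounded above (current subcase) and below (initial hypothesis). Contradiction.

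\textbf{The remaining subcase $b_{\alpha_k}\to\infty$.} Here both $a_{\alpha_k}, b_{\alpha_k}\to\infty$, so $B_{\text{iso}}\to 0$ by uniform decay of its integrand. For $B_{\text{ani}}$, substituting $u=b(\pi/2-\varphi)/a$ inside the support of $\tilde\omega$ splits the analysis by whether $b_{\alpha_k}/a_{\alpha_k}$ stays bounded (in which case $B_{\text{ani}}$ tends to the same $\le 2\tan^2\varphi_0$ limit computed above) or diverges (in which case the integrand decays and $B_{\text{ani}}\to 0$; for $1<s<2$ this follows from convergence of $\int u^2(1+u^2)^{-(2+s)/2}\rd u$, and for $s=1$ from a logarithmic refinement). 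Either way $B\to$ a value strictly less than $1$, contradicting $B=1$. The main technical obstacle I anticipate is organizing these asymptotic estimates uniformly across the crossover regime $b\cos\varphi\sim a\sin\varphi$; however, the qualitative picture is robust, since concentration of $\tilde\omega$ near $\pi/2$ suppresses the $\cos^2\varphi$ factor in $B_{\text{ani}}$, forcing $b_\alpha$ to shrink so that the singular near-axis contribution to $B_{\text{iso}}$ (of order $b^{-s}/a^2$) can restore $B=1$.
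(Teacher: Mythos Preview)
Your plan is sound and would succeed, and it shares with the paper the decisive construction: take $\omega=\omega_3$ from Lemma~\ref{lem_s12exist2} with $\varphi_0$ small, so that $\tilde\omega$ is concentrated near $\varphi=\pi/2$ and the $\cos^2\varphi$ weight in $B$ is suppressed on $\supp\tilde\omega$. The difference is purely in the execution. The paper works with the single scalar ratio $f(t,\alpha)=A(1,t)/B(1,t)$, notes that $f(t,\infty)\ge 2$ for all $t\ge 0$ whenever $\varphi_0$ is small, and uses this together with the already-established monotonicity $\partial_t f>0$ to pin $t_\alpha=b_\alpha/a_\alpha$ in a bounded interval; a short asymptotic then gives the quantitative rate $t_\alpha\le C\alpha^{-1/s}$, hence $b_\alpha\lesssim\alpha^{-1/s+1/(2+s)}\to 0$. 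Your route instead runs a contradiction argument via separate asymptotics of $A_{\mathrm{iso}},A_{\mathrm{ani}},B_{\mathrm{iso}},B_{\mathrm{ani}}$, which is heavier (several subcases in $(a,b)$) and yields only the qualitative $b_\alpha\to 0$, but is entirely self-contained and avoids invoking monotonicity of $f$.

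Two small logical points to tighten. First, the step ``$(a_{\alpha_k},b_{\alpha_k})$ bounded is impossible, hence $a_{\alpha_k}\to\infty$'' is only valid \emph{after} you have restricted to the subcase $b_{\alpha_k}$ bounded; as written it reads as an unconditional claim. Second, in the subcase $b_{\alpha_k}\to\infty$ you assert $a_{\alpha_k}\to\infty$ with no argument. In fact this assertion is not needed: your own machinery already handles $a$ bounded (or even $a\to 0$) there, since $b/a\to\infty$ forces $B_{\mathrm{ani}}/A_{\mathrm{ani}}\sim c\,a^2/b^2\to 0$ and $B_{\mathrm{iso}}\to 0$ regardless. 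Alternatively, one line with the paper's ratio gives it for free: $t_\alpha=b_\alpha/a_\alpha$ bounded (which follows once $f(t,\infty)\ge 2$) means $b\to\infty$ forces $a\to\infty$.
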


This asymptotic behavior of minimizers is counter-intuitive, because $\omega$ achieves minimum only at $\omega(0)=0$ (i.e., the $x_3$-direction), but the minimizers expand in its perpendicular directions, the $x_1x_2$-plane.

\begin{proof}
We follow the notations in the proof of Theorem \ref{thm_abalpha1}. We take $\omega$ as in Lemma \ref{lem_s12exist2} with $\varphi_0$ small. Then $\omega$ satisfies {\bf (hx)}, achieving minimum only at $\omega(0)=0$, and $\tilde{\omega}$ is nonnegative and concentrated near $\varphi=\pi/2$. For such $\tilde{\omega}$, we have $f(t,\infty)$, as in \eqref{ftinfty}, well-defined and continuous for $t\in [0,\infty)$ as discussed in the previous proof. Moreover, one can ensure that $f(0,\infty)\ge 2$ if $\varphi_0$ is sufficiently small since the support of $\tilde\omega$ lies inside $\varphi\in [\pi/2-\varphi_0,\pi/2+\varphi_0]$. As a result, $f(t,\infty)\ge 2$ for any $t\in[0,\infty)$ since we showed that $f(t,\infty)$ is increasing in $t$.

Let $t=t_\alpha>0$ be the unique solution to $f(t,\alpha)=1$ (defined in \eqref{ftalpha}). Since Notice that 
\begin{equation}
    1=f(t_\alpha,\alpha) = \frac{f_{\textnormal{num}}(t_\alpha,\infty) + \alpha^{-1}\bar f_{\textnormal{num}}(t_\alpha)}{f_{\textnormal{denom}}(t_\alpha,\infty) + \alpha^{-1}\bar f_{\textnormal{denom}} (t_\alpha)}
\end{equation}
with
$$
\bar f_{\textnormal{num}}(t)= c_s\int_0^\pi \sin^3\varphi (\sin^2\varphi+t^2\cos^2\varphi)^{-(2+s)/2}\rd{\varphi}
$$
and
$$
\bar f_{\textnormal{denom}} (t)= 2c_s\int_0^\pi \cos^2\varphi\sin\varphi (\sin^2\varphi+t^2\cos^2\varphi)^{-(2+s)/2}\rd{\varphi}\,,
$$
and with $f_{\textnormal{num}}(t,\infty)$ and $f_{\textnormal{denom}}(t,\infty)$ denoting the numerator/denominator as in \eqref{ftinfty}. 
Note that
$$
\lim_{t\rightarrow\infty}\frac{\bar f_{\textnormal{num}}(t)}{\bar f_{\textnormal{denom}} (t)}=\infty.
$$
As a consequence, there exists $T$ independent of $\alpha$ such that this fraction is at least 2 for any $t\ge T$. Moreover, since $f(t,\infty) = \frac{f_{\textnormal{num}}(t,\infty)}{f_{\textnormal{denom}}(t,\infty)}\ge 2$ for any $t\in [0,\infty)$, then we conclude that $t_\alpha\in (0,T)$ for any $\alpha\ge 0$. Indeed, in case $t_\alpha\geq T$, we have
$$
f(t_\alpha,\alpha) 
    \ge \frac{2f_{\textnormal{denom}}(t_\alpha,\infty) + 2\alpha^{-1}\bar f_{\textnormal{denom}} (t_\alpha)}{f_{\textnormal{denom}}(t_\alpha,\infty) + \alpha^{-1}\bar f_{\textnormal{denom}} (t_\alpha)} = 2
$$
since both $\bar f_{\textnormal{num}}(t_\alpha)\geq 2 \bar f_{\textnormal{denom}} (t_\alpha)$ and $f_{\textnormal{num}}(t_\alpha,\infty)\geq 2 f_{\textnormal{denom}}(t_\alpha,\infty)$, contradicting the definition of $t_\alpha$.

Both $f_{\textnormal{num}}(t,\infty)$ and $f_{\textnormal{denom}}(t,\infty)$ are positive and bounded from below for any $t\in [0,T]$. Therefore, in order to have $f(t_\alpha,\alpha)=1$ with $t_\alpha\in (0,T)$, one necessarily has
\begin{align*}
   \alpha^{-1} \bar f_{\textnormal{denom}} (t_\alpha) &= \alpha^{-1}c_s\int_0^\pi \cos^2\varphi\sin\varphi (\sin^2\varphi+t_\alpha^2\cos^2\varphi)^{-(2+s)/2}\rd{\varphi} \\
   &\ge f_{\textnormal{num}}(t_\alpha,\infty)- f_{\textnormal{denom}}(t_\alpha,\infty)\geq c\,,
\end{align*}
which implies
\begin{equation}\label{talpha_scale}
    t_\alpha \le C\alpha^{-1/s}
\end{equation}
by analyzing the asymptotic behavior of the above integral for small $t_\alpha$. In fact, this integral has a singularity near $\varphi=0$ for small $t_\alpha$. By separating the integral into two parts with $|\sin\varphi|\le t_\alpha$ and $|\sin\varphi|> t_\alpha$, it is clear that it behaves like $t_\alpha^{-s}$, and the desired scaling in \eqref{talpha_scale} is obtained.

\eqref{eqtalpha} still gives $a_\alpha \sim \alpha^{1/(2+s)}$ since the LHS integral converges to the positive constant $\int_0^\pi (\sin\varphi)^{1-s}\tilde{\omega}(\varphi)\rd{\varphi}$ as $\alpha\rightarrow\infty$ (noticing $1\le s<2$). Therefore we see that $b_\alpha \lesssim \alpha^{-1/s+1/(2+s)}$ which converges to zero as $\alpha\rightarrow\infty$.

\end{proof}

\subsection{The general case: expansion of minimizers for large $\alpha$}

In the general case when $1\le s<2$, $\omega$ achieving its minimum at $\theta=0$ and $\tilde{\omega}$ is possibly sign-changing, we will apply delicate comparison arguments to show that any energy minimizer cannot be constrained in a fixed infinite cylinder as $\alpha$ increases. In other words, the minimizer has to expand in at least two dimensions. 

We also notice that the expansion in all three dimensions is not true in general, due to the example in Theorem \ref{thm_abalpha2}.

\begin{theorem}\label{thm_cylinder}
Assume $1\le s < 2$ and $\omega$ satisfies {\bf (hx)} with $\omega(\pi/2)>0$.  If $\rho_{(\alpha)}$ is an energy minimizer  for $W_\alpha$ with zero center of mass for each $\alpha>0$,  then for any $\xi\in S^2$,
\begin{equation}\label{thm_cylinder_1}
    \lim_{\alpha\rightarrow\infty}\sup_{\bx\in\supp\rho_{(\alpha)}} \sqrt{|\bx|^2-|\bx\cdot\xi|^2} = \infty\,.
\end{equation}
\end{theorem}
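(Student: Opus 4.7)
The plan is to argue by contradiction. Suppose there exist $\xi_*\in S^2$, $R>0$, and a subsequence $\alpha_n\to\infty$ such that $\supp\rho_{(\alpha_n)}\subset C_* := \{\bx\in\mathbb{R}^3:\sqrt{|\bx|^2-(\bx\cdot\xi_*)^2}\le R\}$ for all $n$. I will derive a contradiction by combining an unconstrained upper bound $\min E_{\alpha_n}\le C\alpha_n^{2/(s+2)}$ (obtained by testing against the rescaled isotropic minimizer $\mu_\lambda(\bx)=\lambda^3\rho_3(\lambda\bx)$ and balancing the three pieces $\lambda^s E^{\textnormal{rep}}[\rho_3]$, $\alpha\lambda^s\mathcal{E}_\omega[\rho_3]$, $\lambda^{-2}E^{\textnormal{conf}}[\rho_3]$ over $\lambda$) with a cylinder-confined lower bound that grows strictly faster in $\alpha_n$.

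For the lower bound I would exploit $\omega(\pi/2)>0$ as follows. The great circle $S^2\cap\xi_*^\perp$ always intersects the equator $\{\theta=\pi/2\}$ (if $\xi_*\ne\pm e_3$ this is a pair of antipodal points; if $\xi_*=\pm e_3$ the whole circle lies on the equator), so by continuity and axisymmetry of $\omega$ there exist $c_0>0$, a unit vector $\bv_*\in S^2\cap\xi_*^\perp$ with $\theta(\bv_*)=\pi/2$, and a conic neighborhood $\mathcal{N}\subset S^2$ of $\bv_*$ with $\omega\ge c_0$ on $\mathcal{N}$. Disintegrate $\rho:=\rho_{(\alpha_n)}$ along the $\xi_*$-axis and partition the $\xi_*$-marginal into $N$ slabs of height $h$ with masses $m_k$. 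For same-slab pairs with perpendicular-to-$\xi_*$ displacement of order $\gtrsim h$, the unit vector $\overline{\bx-\by}$ lies close to $\xi_*^\perp$; after a further directional restriction into $\mathcal{N}$, $\omega(\overline{\bx-\by})\ge c_0$. Combining a two-dimensional Frostman-type bound $\iint_{|\bu-\bv|\ge h}|\bu-\bv|^{-s}d\nu d\nu\ge c m^2 R^{-s}$ for a 2D probability measure $\nu$ of mass $m$ on a disk of radius $R$, the Cauchy–Schwarz bound $\sum_k m_k^2\ge h/L$ (with $L$ the $\xi_*$-diameter of $\supp\rho$), and the choice $h\sim R$, one obtains $\alpha_n\mathcal{E}_\omega[\rho]\ge c'\alpha_n R^{1-s}/L$. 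The virial identity $sE^{\textnormal{rep},\textnormal{total}}=2E^{\textnormal{conf}}$ valid at any minimizer, together with the cylinder support bound $E^{\textnormal{conf}}\le L^2/4+R^2$, then forces $L\ge c''(\alpha_n R^{1-s})^{1/3}$. Substituting back gives $E_{\alpha_n}[\rho]\ge c''' \alpha_n^{2/3}R^{2(1-s)/3}$, and comparison with Step 1 yields $\alpha_n^{2(s-1)/(3(s+2))}\le C(R)$, a contradiction as $\alpha_n\to\infty$ for any $s>1$.

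The borderline case $s=1$ is the main obstacle, since both scalings reduce to $\alpha^{2/3}$ and the pure scaling argument is insufficient. I expect to handle it either by refining the cylinder-confined lower bound to capture the logarithmic enhancement $\log(L/R)/L$ that naturally appears in 3D Riesz estimates at $s=1$ (turning the lower bound into $\sim (\alpha\log\alpha)^{2/3}$), or alternatively by invoking Theorem \ref{thm_abalpha1} in the LIC regime where $\tilde{\omega}\ge c>0$, in which case the minimizer is ellipsoidal with every axis scaling as $\alpha^{1/3}$ and so cannot fit inside any fixed cylinder for $\alpha$ large. A further technical delicacy is that the slab-Frostman estimate requires a positive fraction of same-slab pair directions to lie within $\mathcal{N}$, which must be justified by a careful geometric argument rather than the mere existence of $\bv_*$; this is where the full continuity of $\omega$ and the flexibility in choosing both $\mathcal{N}$ and the slab height $h$ are exploited.
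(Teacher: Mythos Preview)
Your overall contradiction strategy and the upper bound $\min E_\alpha\le C\alpha^{2/(s+2)}$ match the paper. The lower-bound argument, however, has real gaps, and the paper's proof avoids them by a different, Fourier-side idea.

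\textbf{The Frostman/directional step is not valid as stated.} The claimed inequality $\iint_{|\bu-\bv|\ge h}|\bu-\bv|^{-s}\,d\nu\,d\nu\ge c\,m^2R^{-s}$ is false for general measures $\nu$ on a disk (a point mass, or any $\nu$ of diameter $<h$, gives zero), so ``same-slab'' pairs need not produce any lower bound. Moreover, even when there are many such pairs, their directions $\overline{\bx-\by}$ can range over the whole great circle $S^2\cap\xi_*^\perp$. Since $\omega$ is axisymmetric, it is $\ge c_0$ only on a $\theta$-band around $\pi/2$, and for generic $\xi_*$ only part of that great circle lies in this band; nothing prevents the slab measures from concentrating so that $\overline{\bx-\by}$ falls outside $\mathcal{N}$. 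You flag this yourself, but it is not a mere delicacy: without an additional mechanism (e.g.\ trading concentration against the isotropic repulsion) the argument does not close.

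\textbf{The $s=1$ fallback to Theorem~\ref{thm_abalpha1} does not apply.} That theorem requires $\tilde{\omega}\ge c>0$, but for $s=1$ the formula $\omega(\bbx)=\pi[\tilde{\omega}]_{\bbx}$ forces $\min\omega>0$ whenever $\tilde{\omega}\ge c>0$, contradicting $\min\omega=0$ in {\bf (hx)}. So this route is unavailable, and the logarithmic refinement must be carried out---again facing the same directional obstacle.

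\textbf{What the paper does instead.} The paper eliminates the anisotropy altogether before estimating. Using Theorem~\ref{thm_s12exist} it constructs an auxiliary axisymmetric $\omega_4$ with $\tilde{\omega}_4>0$ and $\omega_4<0$ on $[0,\pi/2-\theta_0]$, then sets $\omega_*=\omega-\epsilon\omega_4$, which is \emph{strictly positive everywhere}. Positive-definiteness ($\tilde{\omega}_4>0$) gives $E_{\alpha,*}[\rho]\le E_\alpha[\rho]$, while $\omega_*\ge c>0$ gives $E_{\alpha,*}[\rho]\ge c\,\mathcal{E}_\alpha[\rho]$ with $\mathcal{E}_\alpha$ the isotropic energy for $\alpha|\bx|^{-s}+|\bx|^2$. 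The cylinder lower bound (Lemma~\ref{lem_cEalpha}) is then proved for the \emph{isotropic} $\mathcal{E}_\alpha$, where every same-slab pair contributes because $|\bx-\by|^{-s}\ge c$ uniformly---no Frostman bound or directional restriction is needed. This yields $\mathcal{E}_\alpha\ge c\,\alpha^{2/3}$ for $1<s<2$ and $c(\alpha\ln\alpha)^{2/3}$ for $s=1$ (via a multiscale slab decomposition), contradicting the upper bound.
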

Here $\sqrt{|\bx|^2-|\bx\cdot\xi|^2}$ is the distance between $\bx$ and the line containing the vector $\xi$. Therefore the result shows that $\supp\rho_{(\alpha)}$ cannot be contained in a fixed infinite cylinder for all $\alpha$. Let us define $\mathcal{C}_{R,\xi}$ to be the cylinder of radius $R>0$ and direction $\xi\in S^2$ defined by the inequality $\sqrt{|\bx|^2-|\bx\cdot\xi|^2}\le R$.

The assumption $\omega(\pi/2)>0$ is sharp, because Lemma \ref{lem_exist} shows that if $\omega(\pi/2)=0$ then any energy minimizer  for $W_\alpha$ with zero center of mass has uniformly bounded support.

To prove this theorem, we will argue by contradiction and assume that every $\rho_{(\alpha)}$ is supported in a fixed cylinder of radius $R$. Then we will use a comparison argument with isotropic energies, based on the following lemma.
\begin{lemma}\label{lem_cEalpha}
Assume $1\le s < 2$. For any $\alpha> 0$, 
let $\cE_\alpha$ denote the total energy for $\cW_\alpha(\bx)=\alpha|\bx|^{-s}+|\bx|^2$. Fix $R>0$ and $\xi\in S^2$. Denote $\mathcal{P}$ as the set of probability measures on $\mathbb{R}^3$, and
\begin{equation}
    \mathcal{P}_{R,\xi}=\Big\{\rho\in\mathcal{P}: \rho\textnormal{ compactly supported, } \int_{\mathbb{R}^3}\bx\rho\rd{\bx}=0,\, \supp\rho\subset \mathcal{C}_{R,\xi}\Big\}.
\end{equation} 
Then
\begin{equation}
    \inf_{\rho\in\mathcal{P}_{R,\xi}} \cE_\alpha[\rho] \ge\left\{\begin{split} & c \alpha^{2/3},\quad 1<s<2 \\
    & c(\alpha\ln \alpha)^{2/3},\quad s=1
    \end{split}\right.
\end{equation}
for any $\alpha\ge 1$, where $c$ only depends on $s$ and $R$.
\end{lemma}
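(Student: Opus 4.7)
The plan is to reduce the 3D variational problem on the cylinder to a 1D problem on its axis by projection, and then to balance the variance of the projection (coming from the attractive part of $\cW_\alpha$) against a Cauchy--Schwarz-type lower bound on the Riesz self-energy of the projection.

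Given $\rho\in\mathcal{P}_{R,\xi}$, decompose $\bx=x_\parallel\xi+\bx_\perp$ and let $\mu$ be the pushforward of $\rho$ under $\bx\mapsto\bx\cdot\xi$, a compactly supported probability measure on $\mathbb{R}$ with zero mean; set $V:=\int x^2\,\rd\mu$. Since $\rho$ has zero center of mass, the attractive contribution satisfies $\tfrac{1}{2}\iint|\bx-\by|^2\,\rd\rho\,\rd\rho=\int|\bx|^2\,\rd\rho\ge V$. The cylinder constraint gives $|\bx-\by|^2\le (x_\parallel-y_\parallel)^2+4R^2$, hence $|\bx-\by|^{-s}\ge K(x_\parallel-y_\parallel)$ with $K(t):=(t^2+4R^2)^{-s/2}$; therefore the repulsive contribution dominates $\tfrac{\alpha}{2}\iint K\,\rd\mu\,\rd\mu$, and it suffices to bound this 1D quantity from below in terms of $V$ alone.

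If $V\le 1$, Chebyshev's inequality gives $\mu([-\sqrt 2,\sqrt 2])\ge 1/2$, so $\rho$ has mass $\ge 1/2$ on a compact set of diameter $\le\sqrt{8+4R^2}$, and a trivial bound yields $\cE_\alpha[\rho]\ge c\alpha$, which dominates the claimed rate for $\alpha\ge 1$. If $V>1$, set $L=\sqrt{2V}$ and $\mu_0:=\mu|_{[-L,L]}$, of mass $m\ge 1/2$. The convolution $\mu_0*\phi_r$ with $\phi_r:=(2r)^{-1}\chi_{[-r,r]}$ has $L^1$-norm $m$ and support in an interval of length $\le 2(L+r)$, so the Cauchy--Schwarz bound $\|f\|_1^2\le|\supp f|\cdot\|f\|_2^2$ on its $L^2$-norm produces
\begin{equation*}
    \iint\chi_{|x-y|\le 2r}\,\rd\mu_0(x)\,\rd\mu_0(y) \;\ge\; \frac{m^2 r}{L+r}, \qquad r>0.
\end{equation*}
For $1<s<2$, taking $r=R$ and using $K(t)\ge(8R^2)^{-s/2}$ on $|t|\le 2R$ gives $\iint K\,\rd\mu\,\rd\mu\ge c_R V^{-1/2}$, and minimizing $V+c\alpha V^{-1/2}$ by elementary calculus yields the bound $\cE_\alpha\ge c\alpha^{2/3}$. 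For $s=1$, one instead uses the sharper pointwise bound $K(t)\ge(|t|+2R)^{-1}$ together with the layer-cake identity
\begin{equation*}
    \iint\frac{\rd\mu_0\,\rd\mu_0}{|x-y|+2R}\;=\;\int_0^\infty\iint\chi_{|x-y|<r}\,\rd\mu_0\,\rd\mu_0\cdot\frac{\rd r}{(r+2R)^2},
\end{equation*}
splits at $r=2L$, and inserts the Cauchy--Schwarz estimate on $[0,2L]$ (with the trivial bound $\ge m^2$ beyond), producing $\iint K\,\rd\mu\,\rd\mu\gtrsim L^{-1}\ln(L/R)\gtrsim V^{-1/2}\ln V$ for $V\gg R^2$; the elementary integral $\int_0^{2L}r(r+2R)^{-2}\,\rd r\sim\ln(L/R)$ is the source of the logarithm. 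Minimizing $V+c\alpha V^{-1/2}\ln V$ then gives $V\sim(\alpha\ln\alpha)^{2/3}$ and the desired $(\alpha\ln\alpha)^{2/3}$ rate.

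The main obstacle lies in the borderline case $s=1$: applying the Cauchy--Schwarz bound at the single scale $r=R$ only recovers the weaker rate $\alpha^{2/3}$, so the slow decay $K(t)\asymp|t|^{-1}$ for $|t|\gg R$ must be exploited by integrating the scale-$r$ estimate against the density $(r+2R)^{-2}\,\rd r$ over all scales $r\in(0,L)$, which is where the logarithm emerges. For $1<s<2$ the Riesz kernel $K$ localizes near the diagonal rapidly enough that a single scale suffices, and the above reduction to a 1D balance problem goes through cleanly.
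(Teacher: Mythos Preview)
Your proof is correct and follows essentially the same strategy as the paper's: reduce to a one-dimensional balance along the cylinder axis between the variance $V$ of the projected measure and a Cauchy--Schwarz-type lower bound on the repulsive self-energy, with a multi-scale argument in the borderline case $s=1$ to extract the logarithm. The paper discretizes the axis into unit (respectively dyadic) cells $a_n$ (resp.\ $a_{k,n}$) and phrases the balance as minimizing the functional $F_\alpha[u]=\alpha\int u^2+\int x^2u$, optimizing over the number $K\sim\ln\alpha$ of dyadic scales for $s=1$, whereas you work continuously with the pushforward $\mu$, a mollification estimate, and a layer-cake identity---but the underlying mechanism is identical.
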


Notice that the minimal value of $\cE_\alpha[\rho]$ for $\rho\in \mathcal{P}$ scales like $\alpha^{2/(2+s)}$ where the exponent $2/(2+s) < 2/3$ for $1<s<2$.  Therefore the above result shows that restricting to an infinite cylinder of radius $R$ makes the minimal energy larger in terms of its $\alpha$-scaling (which degenerates to a logarithmic factor for $s=1$).

\begin{proof}

We first treat the case $1<s<2$. Let $\rho\in \mathcal{P}_{R,\xi}$. Define
\begin{equation}
    a_n = \int_{|\bx\cdot\xi|\in [n-1/2,n+1/2)}\rho\rd{\bx},\quad n\in\mathbb{Z}
\end{equation}
as the mass of $\rho$ inside each piece of the cylinder of height 1. Then the repulsive part of the energy satisfies
\begin{equation}
    \int_{\mathbb{R}^3} (|\cdot|^{-s}*\rho)\rho\rd{\bx} \ge \sum_n \iint_{|\bx\cdot\xi|,|\by\cdot\xi|\in [n-1/2,n+1/2)} |\bx-\by|^{-s}\rho(\by)\rd{\by}\rho(\bx)\rd{\bx} \ge c\sum_n a_n^2
\end{equation}
since one always has $|\bx-\by|^{-s}\ge c$ in the last integrand, $c$ depending on $R$. The quadratic part of the energy is estimated by
\begin{equation}
    \int_{\mathbb{R}^3} (|\cdot|^2*\rho)\rho\rd{\bx} = 2\int_{\mathbb{R}^3} |\bx|^2\rho(\bx)\rd{\bx} \ge c\sum_n n^2a_n
\end{equation}
using the mean-zero assumption. Therefore we see that
\begin{equation}
    \cE_\alpha[\rho] \ge c\left(\alpha \sum_n a_n^2 + \sum_n n^2a_n\right) .
\end{equation}
If we denote
\begin{equation}\label{Falpha}
    F_\alpha[u] = \alpha\int_{\mathbb{R}} u^2\rd{x} + \int_{\mathbb{R}} x^2u(x)\rd{x}
\end{equation}
for nonnegative $u\in L^2(\mathbb{R})$ with compact support and $\int_{\mathbb{R}}u\rd{x}=1$, then it is clear that
\begin{equation}
    \alpha \sum_n a_n^2 + \sum_n n^2a_n \ge c F_\alpha\Big[\sum_n a_n\chi_{[n-1/2,n+1/2)}\Big]-1 \ge c \inf_u F_\alpha[u]-1,
\end{equation}
where the extra `$-1$' takes account of the $n=0$ term in $\sum_n n^2a_n$. It is straightforward to show that $\inf_u F_1[u]>0$, and $\inf_u F_\alpha[u]=\alpha^{2/3}\inf_u F_1[u]$ by rescaling. Therefore we conclude that
\begin{equation}
    \cE_\alpha[\rho] \ge c(\alpha^{2/3}-1) ,
\end{equation}
which implies the desired result for any sufficiently large $\alpha$. The case of smaller $\alpha\ge 1$ is clearly true up to switching to a smaller constant $c$.

Finally we treat the case $s=1$ which is more delicate. We will imitate the previous proof but consider cylinder pieces of various heights. Define
\begin{equation}
    a_{k,n} = \int_{|\bx\cdot\xi|\in [2^k(n-1/2),2^k(n+1/2))}\rho\rd{\bx},\quad k\in\mathbb{Z}_{\ge 0},\,n\in\mathbb{Z}
\end{equation}
as the mass of $\rho$ inside each piece of the cylinder of height $2^k$. Notice that
\begin{equation}
    |\bx|^{-1} \ge c\sum_{k=0}^\infty 2^{-k}\chi_{|\bx|\le (1+2R)2^k}
\end{equation}
with $c$ depending on $R$. It is clear that $|\bx_1-\bx_2|\le (1+2R)2^k$ whenever $\bx_1,\bx_2$ are in the same piece of the cylinder of height $2^k$. Therefore
\begin{align*}
    \int_{\mathbb{R}^3} (|\cdot|^{-1}*\rho)\rho\rd{\bx}&\ge c\sum_{k=0}^\infty 2^{-k}\int_{\mathbb{R}^3} (\chi_{|\cdot|\le (1+2R)2^k}*\rho)\rho\rd{\bx} \\
    & \ge c\sum_{k=0}^\infty 2^{-k}
\sum_n \iint_{|\bx\cdot\xi|,|\by\cdot\xi|\in [2^k(n-1/2),2^k(n+1/2))}\!\!\!\!\!\!\!\!\!\!\!\!\!\!\!\!\!\!\!\!\! \rho(\by)\rd{\by}\rho(\bx)\rd{\bx}    \\
    &\ge c\sum_{k=0}^\infty 2^{-k}\sum_n a_{k,n}^2 \,.
\end{align*}
For any $k\in\mathbb{Z}_{\ge 0}$, the quadratic part of the energy is estimated by
\begin{equation}
    \int_{\mathbb{R}^3} (|\cdot|^2*\rho)\rho\rd{\bx} = 2\int_{\mathbb{R}^3} |\bx|^2\rho(\bx)\rd{\bx} \ge c\sum_n 2^{2k}n^2a_{k,n}
\end{equation}
using the mean-zero assumption (with $c$ independent of $k$). Therefore we see that
\begin{equation}
    \cE_\alpha[\rho] \ge \frac{c}{K}\sum_{k=0}^{K-1}\Big(\alpha K 2^{-k}\sum_n a_{k,n}^2 + \sum_n 2^{2k}n^2a_{k,n}\Big) 
\end{equation}
for any $K\in\mathbb{Z}_{\ge 1}$. Later we will specify the choice of $K$.

Using the notation $F_\alpha[u]$ in \eqref{Falpha}, for every $k\in\mathbb{Z}_{\ge 0}$, we have
\begin{equation}
    \alpha K 2^{-k}\sum_n a_{k,n}^2 + \sum_n 2^{2k}n^2a_{k,n} \ge c\Big(F_{\alpha K}\Big[\sum_n a_{k,n}2^{-k}\chi_{[2^k(n-1/2),2^k(n+1/2))}\Big]-2^{2k}\Big)
\end{equation}
where the extra `$-2^{2k}$' takes account of the $n=0$ term in $\sum_n 2^{2k}n^2a_{k,n}$. Then, using $\inf_u F_\alpha[u]=c\alpha^{2/3}$ as before, we obtain
\begin{equation}
    \alpha K 2^{-k} \sum_n a_{k,n}^2 + \sum_n 2^{2k}n^2a_{k,n} \ge c(c(\alpha K)^{2/3}-2^{2k})\,.
\end{equation}
Therefore, we conclude
\begin{equation}
    \cE_\alpha[\rho] \ge \frac{c}{K}\sum_{k=0}^{K-1}(c(\alpha K)^{2/3}-2^{2k})  \ge c\alpha^{2/3}K^{2/3}-C 2^{2K} \,.
\end{equation}
By taking $K \sim \ln \alpha$ with properly chosen constant multiple, one can absorb the last negative term and obtain the conclusion.

\end{proof}

\begin{proof}[Proof of Theorem \ref{thm_cylinder}]
Since $\omega$ is smooth with $\omega(\pi/2)>0$, there exists $\theta_0\in (0,\pi/2)$ such that $\omega(\theta)\ge c > 0$ for any $\theta\in [\pi/2-\theta_0,\pi/2]$. Using the construction $\omega_1$ in Theorem \ref{thm_s12exist}, we may define
\begin{equation}
    \omega_4(\theta)=\omega_1(\theta)-\epsilon_4
\end{equation}
for sufficiently small $\epsilon_4>0$, such that $\omega_4(\theta)<0$ for any $\theta\in [0,\pi/2-\theta_0]$ while $\tilde{\omega}_4>0$. Here, we use that Theorem \ref{thm_s12exist} ensures $\tilde{\omega}_1>0$. Define
\begin{equation}
    \omega_*(\theta) = \omega(\theta)-\epsilon \omega_4(\theta)
\end{equation}
where $\epsilon>0$ is sufficiently small so that $\omega_*(\theta)> 0$ for any $\theta\in [\pi/2-\theta_0,\pi/2]$. We also have $\omega_*(\theta)>0$ for any $\theta\in [0,\pi/2-\theta_0]$ by the choice of $\omega_4$. Therefore, $\omega_*\ge c > 0$.

Recall that $\rho_{(\alpha)}$ is an energy minimizer for $W_\alpha(\bx)=|\bx|^{-s}(1+\alpha \omega(\bbx))+|\bx|^2$. Therefore, since $1+\alpha \omega \le C\alpha$ for any $\alpha\ge 1$, we have the upper bound for the minimal energy
\begin{equation}\label{Ealpha1}
    E_\alpha[\rho_{(\alpha)}] = \min_\rho E_\alpha[\rho] \le C\min_\rho\cE_\alpha[\rho] = C \alpha^{2/(2+s)}
\end{equation}
using the isotropic energy functional $\cE_\alpha$ defined in Lemma \ref{lem_cEalpha}. On the other hand, we define the energy for $W_{\alpha,*}(\bx)=|\bx|^{-s}(1+\alpha \omega_*(\bbx))+|\bx|^2$ as
\begin{equation}\begin{split}
    E_{\alpha,*}[\rho] = & \frac{1}{2}\int_{\mathbb{R}^3}\int_{\mathbb{R}^3} W_{\alpha,*}(\bx-\by)\rho(\by)\rd{\by}\rho(\bx)\rd{\bx} \\
    = & E_\alpha[\rho] -  \frac{\epsilon\alpha}{2}\int_{\mathbb{R}^3}\int_{\mathbb{R}^3} |\bx-\by|^{-s}\omega_4(\overline{\bx-\by})\rho(\by)\rd{\by}\rho(\bx)\rd{\bx}
\end{split}\end{equation}
where the last integral is well-defined as long as $E_\alpha[\rho]<\infty$, due to dominated convergence. Then
\begin{equation}\label{Ealpha2}
    E_{\alpha,*}[\rho] \le E_\alpha[\rho]
\end{equation}
for any compactly supported $\rho\in\mathcal{P}$ with $E_\alpha[\rho]<\infty$,  since $\cF[|\bx|^{-s}\omega_4(\bbx)] = |\xi|^{-3+s}\tilde{\omega}_4(\bxi) > 0$ by the construction of $\omega_4$. Since 
$\omega_*\ge c > 0$, we have
\begin{equation}\label{Ealpha3}
    E_{\alpha,*}[\rho_{(\alpha)}] \ge c \cE_\alpha[\rho_{(\alpha)}]
\end{equation}
for any $\alpha\ge 1$.

To prove \eqref{thm_cylinder_1}, assume on the contrary that $\supp\rho_{(\alpha_n)}\subset \mathcal{C}_{R,\xi} $ for some $R>0$ and a sequence $\{\alpha_n\}$ with $\lim_{n\rightarrow\infty}\alpha_n=\infty$. In other words, $\rho_{(\alpha_n)} \in \mathcal{P}_{R,\xi}$ for every $n$. Then Lemma \ref{lem_cEalpha} gives
\begin{equation}
    \cE_{\alpha_n}[\rho_{(\alpha_n)}] \ge \left\{\begin{split} & c \alpha_n^{2/3},\quad 1<s<2 \\
    & c(\alpha_n\ln \alpha_n)^{2/3},\quad s=1
    \end{split}\right.\,.
\end{equation}
However, a combination of \eqref{Ealpha1}, \eqref{Ealpha2} and \eqref{Ealpha3} gives
\begin{equation}
    \cE_{\alpha_n}[\rho_{(\alpha_n)}] \le C\alpha_n^{2/(2+s)}\,.
\end{equation}
Therefore, for any $1\le s < 2$, we obtain a contradiction for sufficiently large $n$.

\end{proof}

\appendix

\section{List of notations and integral formulas}
\label{app:constants}

According to \cite{CV1,CDM16,CH,carrilloshu21} (with suitable rescaling), the unique energy minimizer for the interaction potential $|\bx|^{-s}+|\bx|^2$ in $d$-dimension with $0<s<d$ (in the class of probability measures, up to translation) is given explicitly by
\begin{equation}\label{rhod}
    \rho_d(\bx) = C_d(R_d^2-|\bx|^2)_+^{(s+2-d)/2},\quad d=1,2,3,\quad 0<s<d. 
\end{equation}
Here the explicit formulas for $R_d$ and $C_d$ are given by
\begin{equation}\label{R1C1}\begin{split}
    & R_1 = \Big(\frac{2\cos\frac{s\pi}{2}}{s(s+1)\pi} \beta\Big(\frac{1}{2},\frac{3+s}{2}\Big)\Big)^{\frac{1}{-s-2}},\quad C_1=\frac{2\cos\frac{s\pi}{2}}{s(s+1)\pi} \\
    & R_2 = \Big(\frac{8\sin\frac{s\pi}{2}}{s^2(2+s)\pi}\Big)^{\frac{1}{-s-2}},\quad C_2 = \frac{4\sin\frac{s\pi}{2}}{s^2\pi^2} \\
    & R_3 = \Big(\frac{6\cos\frac{s\pi}{2}}{s(1-s)\pi} \beta\Big(\frac{3}{2},\frac{1+s}{2}\Big)\Big)^{\frac{1}{-s-2}},\quad C_3=\frac{3\cos\frac{s\pi}{2}}{s(1-s)\pi} \\
\end{split}\end{equation}
where $\beta$ denotes the Beta function. Here $R_3,C_3$ are well-defined and positive for the whole range $0<s<3$ because the point $s=1$ is a removable singularity.

We denote the following probability measures on $\mathbb{R}^3$
\begin{equation}\label{rho1d2d}
    \rho_{\textnormal{1D}}(\bx) = \delta(x_1)\delta(x_2)\rho_1(x_3),\quad \rho_{\textnormal{2D}}(\bx) = \delta(x_3)\rho_2(x_1,x_2)
\end{equation}
as candidates of lower-dimensional energy minimizers.

The Fourier transform of power functions on $\mathbb{R}^3$ is given by
\begin{equation}\label{calc2}
    \cF[|\bx|^{-s}] = c_s |\xi|^{-3+s},\quad 0<s<3,\quad c_s = \pi^{s-\frac{3}{2}}\frac{\Gamma((3-s)/2)}{\Gamma(s/2)}.
\end{equation}

In the sense of improper integral, we can obtain
\begin{equation}\label{calc3}
    \int_0^\infty r^{s}\cos r   \rd{r} = -\Gamma(1+s)\sin\frac{s\pi}{2},\quad -1<s<0
\end{equation}
and
\begin{equation}\label{calc3s}
    \int_0^\infty r^{s}\sin r   \rd{r} = \Gamma(1+s)\cos\frac{s\pi}{2},\quad -1<s<0.
\end{equation}
These two formulas can be proved by contour integrals whose details are omitted.

Finally, we denote 
\begin{equation}\label{calc4}
    \tau_s = (2\pi)^{-s}\Gamma(s)\cos\frac{s\pi}{2},\quad 0<s<3 .
\end{equation}
Notice that $\tau_1=0$, and $\tau_s$ is negative for $1<s<3$ and positive for $0<s<1$.

\section{Pointwise formula for the Fourier transform for $1<s< 2$}\label{app_FT}

\begin{lemma}
Under the same assumptions and notations as Lemma \ref{lem_FT}, we have
\begin{equation}\label{lem_FT_3}
    \tilde{\Omega}(0,0,1) = \tau_{3-s}\int_0^\pi\int_0^{2\pi}(\Omega(\bbx)-[\Omega]_{(0,0,1)})\rd{\mu}|\cos\theta|^{-3+s}\sin\theta\rd{\theta} + c_s [\Omega]_{(0,0,1)},\quad 1<s\le 2
\end{equation}
where the above integral is understood as an iterated integral.
\end{lemma}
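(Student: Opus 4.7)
The plan is to derive the formula by starting from the expression in \eqref{lem_FT_2} valid for $2<s<3$, splitting $\Omega(\bbx) = (\Omega(\bbx)-[\Omega]_{(0,0,1)}) + [\Omega]_{(0,0,1)}$ inside the integral, and then using analytic continuation in $s$ to extend the resulting identity down to the range $1<s\le 2$.

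The first step is purely algebraic. Fix $\bxi=(0,0,1)$. Applying \eqref{lem_FT_2} to $\Omega\equiv 1$ and comparing with the scalar identity $\cF[|\bx|^{-s}]=c_s|\xi|^{-3+s}$ from \eqref{calc2}, I obtain
\begin{equation*}
c_s \;=\; 2\pi\tau_{3-s}\int_0^\pi |\cos\theta|^{-3+s}\sin\theta\rd\theta,\qquad 2<s<3.
\end{equation*}
Substituting this back into the decomposed form of \eqref{lem_FT_2}, where the subtracted constant $[\Omega]_{(0,0,1)}$ contributes exactly the factor $2\pi\tau_{3-s}\int_0^\pi|\cos\theta|^{-3+s}\sin\theta\rd\theta = c_s$, yields formula \eqref{lem_FT_3} on the range $2<s<3$.

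Next I would argue that both sides of \eqref{lem_FT_3} extend to holomorphic functions of $s$ on the strip $\kS$. The LHS is holomorphic by Lemma \ref{lem_FT}, and $c_s[\Omega]_{(0,0,1)}$ is manifestly holomorphic. The delicate point is the integral term. The evenness assumption $\Omega(\bbx)=\Omega(-\bbx)$ in {\bf (H0)} forces the smooth function $f(\theta):=\int_0^{2\pi}\Omega(\bbx)\rd\mu$ to satisfy $f(\pi-\theta)=f(\theta)$; a Taylor expansion at $\pi/2$ then yields $f(\theta)-f(\pi/2)=O((\theta-\pi/2)^2)$. Consequently the iterated integrand, being $(f(\theta)-f(\pi/2))|\cos\theta|^{-3+s}\sin\theta$, is pointwise bounded by $C|\theta-\pi/2|^{\Re(s)-1}$ near $\theta=\pi/2$, uniformly for $s$ in compact subsets of $\kS$; differentiation in $s$ only inserts an additional $\ln|\cos\theta|$ factor which does not affect integrability. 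This yields absolute convergence of the iterated integral and holomorphy of the RHS on all of $\kS$.

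The main obstacle is precisely this second-order cancellation of $f(\theta)-f(\pi/2)$ at $\pi/2$: without the evenness assumption the iterated integral would diverge at the critical case $s=2$, where $|\cos\theta|^{-1}\sin\theta$ has a nonintegrable singularity, and for $1<s<2$ as well. Once holomorphy on $\kS$ is established, the identity of two holomorphic functions agreeing on the open set $(2,3)\subset\kS$ propagates to the whole of $\kS$ by the identity theorem, yielding \eqref{lem_FT_3} on the entire claimed range $1<s\le 2$.
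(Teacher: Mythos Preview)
Your proof is correct and follows essentially the same route as the paper: split off the constant $[\Omega]_{(0,0,1)}$, verify the resulting identity on $(2,3)$ via \eqref{lem_FT_2} and \eqref{calc2}, then analytically continue using holomorphy of both sides. One minor correction to your commentary: the claim that evenness is \emph{essential} for convergence on $1<s\le 2$ is too strong---the subtraction alone already gives $f(\theta)-f(\pi/2)=O(|\theta-\pi/2|)$, hence integrand $O(|\theta-\pi/2|^{\Re(s)-2})$, which is integrable for $\Re(s)>1$; this first-order vanishing is all the paper uses. Your second-order estimate from evenness is correct and indeed extends holomorphy to all of $\kS$, but this is more than the statement requires.
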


The formula for $\tilde{\Omega}(\bxi)$ for $1<s\le 2$ and general $\bxi$ can be obtained by applying a rotation to \eqref{lem_FT_3}, but we do not give it explicitly because the notation would be cumbersome. Also notice that \eqref{lem_FT_2b} for $\bxi=(0,0,1)$ can be obtained as a special case of \eqref{lem_FT_3}.

\begin{proof}

The RHS of \eqref{lem_FT_3} is well-defined for any $s$ with $\Re(s)\in (1,3)$. To see this, we notice that $\theta\mapsto \int_0^{2\pi}(\Omega(\bbx)-[\Omega]_{(0,0,1)})\rd{\mu}$ is a smooth function of $\theta\in [0,\pi]$ which vanishes at $\theta=\pi/2$, by the definition of $[\Omega]_{(0,0,1)}$. Therefore $\int_0^{2\pi}(\Omega(\bbx)-[\Omega]_{(0,0,1)})\rd{\mu}|\cos\theta|^{-3+s}\sin\theta$ is integrable in $\theta$ for complex number $s$ with $\Re(s)\in (1,3)$. The RHS of \eqref{lem_FT_3} is holomorphic in $s$ because $\tau_{3-s}$ and $c_s$ are holomorphic, and one can take $s$-derivative of the integral as
\begin{equation}\begin{split}
    \partial_s\int_0^\pi\int_0^{2\pi} & (\Omega(\bbx)-[\Omega]_{(0,0,1)})\rd{\mu}|\cos\theta|^{-3+s}\sin\theta\rd{\theta} \\ = & \int_0^\pi\int_0^{2\pi}(\Omega(\bbx)-[\Omega]_{(0,0,1)})\rd{\mu}|\cos\theta|^{-3+s}\ln|\cos\theta|\sin\theta\rd{\theta}
\end{split}\end{equation}
since the RHS is integrable.

We also notice that the RHS of \eqref{lem_FT_3} agrees with $\tilde{\Omega}((0,0,1);s)$ for $s\in (2,3)$. In fact, by comparing with the formula \eqref{lem_FT_2}, it suffices to show that the two terms involving $[\Omega]_{(0,0,1)}$ cancel each other for $s\in (2,3)$. To see this, we use the relation 
\begin{equation}
    2\pi\tau_{3-s}\int_0^\pi|\cos\theta|^{-3+s}\sin\theta\rd{\theta} =2\pi(2\pi)^{-3+s}\Gamma(3-s)\cos\frac{(3-s)\pi}{2}\cdot\frac{2}{s-2} = c_s,
\end{equation} 
where the last inequality uses the formulas $\Gamma(z)\Gamma(1-z)=\frac{\pi}{\sin \pi z}$ and $\Gamma(z)\Gamma(z+1/2)=2^{1-2z}\sqrt{\pi}\Gamma(2z)$. Since the RHS of \eqref{lem_FT_3} and $\tilde{\Omega}((0,0,1);s)$ are both holomorphic in $s$ for $\Re(s)\in (1,3)$ and agree for $s\in (2,3)$, they have to agree for any $\Re(s)\in (1,3)$, which implies \eqref{lem_FT_3}.

\end{proof}

\section*{Acknowledgements}
JAC and RS were supported by the Advanced Grant Nonlocal-CPD (Nonlocal PDEs for Complex Particle Dynamics: Phase Transitions, Patterns and Synchronization) of the European Research Council Executive Agency (ERC) under the European Union's Horizon 2020 research and innovation programme (grant agreement No. 883363). JAC was also partially supported by the EPSRC grant number EP/T022132/1 and EP/V051121/1.

% Bibliography
\bibliographystyle{abbrv}
\bibliography{biblio}

\end{document}